\newtheorem{Th}{Theorem}
\newtheorem{Prop}[Th]{Proposition}
\newtheorem{Lm}[Th]{Lemma}
\newtheorem{Co}[Th]{Corollary}
\newtheorem*{mainthm}{Main Theorem}{\bf}{\it}
\def\be{\begin{eqnarray}} \def\ee{\end{eqnarray}} \def\bes{\begin{eqnarray*}}
\def\ees{\end{eqnarray*}} \def\bit{\begin{itemize}} \def\eit{\end{itemize}}
\def\ba{\begin{array}{ll}} \def\ea{\end{array}}
\def\mn{\mathbb{N}}
\def\mq{\mathbb{Q}}
\def\mc{\mathbb{C}}  
\def\mr{\mathbb{R}}
\def\mrc{\mathbb{R}_\mathcal{C}}
\def\md{\mathbb{D}}
\def\mh{\mathbb{H}}
\def\cp{\mathcal{P}}
\def\cl{\mathcal{L}}
\def\cR{\mathcal{R}}
\def\ca{\mathcal{A}}
\def\cd{\mathcal{D}}
\newtheorem{Def}[Th]{Definition}
\newtheorem{Rem}[Th]{Remark} \def\C{\mathbb{C}}  \def\mn{\mathbb{N}}
\def\le{\leqslant} \def\ge{\geqslant}  \def\md{\mathbb{D}}
\def\d{\text{dist}} \def\re{\text{Re}} \def\im{\text{Im}}
\def\diam{\text{diam}}
\def\i{\mathrm{i}}
\def\inter{\text{Int}}
\def\id{\text{Id}}
\newcommand{\dd}{\mathrm{d}}
\title{Poly-time Computability of the Feigenbaum Julia set.}
\author{Artem Dudko and Michael Yampolsky}
\date{}
\begin{document}
\begin{large}
\maketitle
\begin{abstract} We present the first example of a poly-time computable Julia set with a recurrent critical point:
we prove that the Julia set of the Feigenbaum map is computable in polynomial time.

\end{abstract}


\section{Introduction.}
Informally speaking, a compact set $K$ in the plane is computable if there exists an algorithm to draw it on
a computer screen with any desired precision. Any computer-generated picture is a finite collection of pixels.
If we fix a specific pixel size (commonly taken to be $2^{-n}$ for some $n$) then to accurately draw the set within
one pixel size, we should fill in the pixels which are close to the set (for instance, within distance $2^{-n}$ from it),
and leave blank the pixels which are far from it (for instance, at least $2^{-(n-1)}$-far). Thus, for the set $K$ to be 
computable, there has to exist an algorithm which for every square of size $2^{-n}$ with dyadic rational vertices 
correctly decides whether it should be filled in or not according to the above criteria. 
We say that a computable set has a polynomial time complexity (is {\it poly-time})
if there is an algorithm which does this in a time bounded by a polynomial function of the precision parameter $n$, independent
of the choice of a pixel.

When we talk of computability of the Julia sets of a rational map,
the algorithm drawing it is supposed to have access to the values of the coefficients of the map (again with an arbitrarily
high precision). Computability of Julia sets has been explored in depth by M.~Braverman and the second author
(see monograph \cite{BY06} and references therein). They have shown, in particular, that there exist quadratic polynomials
$f_c(z)=z^2+c$ with explicitly computable parameters $c$ whose Julia sets $J_c$ are not computable. Such parameters
are rare, however; for almost every $c\in\mc$ the set $J_c$ is computable. In \cite{BBY06} it was shown that there exist
computable quadratic Julia sets with an arbitrarily high time complexity. On the other hand, hyperbolic Julia sets are
poly-time \cite{Bra04,Ret05}. The requirement of hyperbolicity may be weakened significantly. The first author has shown \cite{Dudko}
that maps with non-recurrent critical orbits have poly-time Julia sets. However, even in the quadratic family $f_c$ it is not
at present known if $J_c$ is poly-time for a typical value of $c$ (see the discussion in \cite{Dudko}). 

Until now, no examples of poly-time computable Julia sets with a recurrent critical point have been known. 
In this note we present the first such example. It is given by
 perhaps the most famous quadratic map of all -- the Feigenbaum polynomial $f_{c_*}$.
The Feigenbaum map is infinitely renormalizable under period-doubling, and its renormalizations converge to a fixed point of the renormalization
operator. Historically, this is the first instance of renormalization in Complex Dynamics (see \cite{Lyu} for an overview of the history of the
subject). As follows from \cite{BBY07}, the Julia set of $f_{c_*}$ is computable. However, infinite renormalizability implies, in particular,
that we cannot expect to find any hyperbolicity in the dynamics of $f_{c_*}$ to make the computation fast.
We find a different hyperbolic dynamics, however, to speed up the computation -- the dynamics of the renormalization operator itself.
In a nutshell, this is the essense of the poly-time algorithm described in this paper.

The details, however, are quite technical and analytically involved. To simplify the exposition, we prove poly-time computability of the Julia set
not of the map $f_{c_*}$ itself, but of the Feigenbaum renormalization fixed point $F$. The map $F$ is not a quadratic polynomial,
but it is quadratic-like, it is conjugate to $f_{c_*}$, and its Julia set is homeomorphic to that of $f_{c_*}$. 
There are two advantages to working with $F$, as opposed to $f_{c_*}$. Firstly, the renormalization-induced self-similarity of the Julia set
of $F$ is exactly, rather than approximately linear. This allows us to streamline the arguments somewhat, making them easier
to follow. More importantly, as we show, the map $F$ itself is poly-time computable (an efficient algorithm for computing $F$ is due to
O.~Lanford \cite{Lan}). Hence, our main result -- poly-time computability of the Julia set of $F$ -- can be stated without the use of
an oracle for the map $F$.

We now proceed to give the detailed definitions and 
precise statements of our main results.

\subsection{Preliminaries on computability}
In this section we give a very brief review of computability and complexity of sets.
For details we refer the reader to the monograph \cite{BY08}.
 The notion of computability relies on the concept of a Turing Machine (TM) \cite{Tur},
 which is a commonly accepted way of formalizing
the definition of an algorithm.
A precise description of a Turing Machine is quite technical and we
do not give it here, instead referring the reader to any text on Computability Theory (e.g. \cite{Pap} and \cite{Sip}). 
The computational power of a Turing Machine is provably equivalent to that of a computer program running on a RAM
computer with an unlimited memory.

\begin{Def}\label{comp fun def} 
A function $f:\mn\rightarrow \mn$ is called computable, if there exists
a TM which takes $x$ as an input and outputs $f(x)$.
\end{Def}
Note that Definition \ref{comp fun def} can be naturally extended to
functions on arbitrary countable sets, using a convenient
identification with $\mathbb{N}$. The following definition of a computable real number is
due to Turing \cite{Tur}:
\begin{Def} A real number $\alpha$ is called computable if there is a
 computable function $\phi:\mathbb{N}\rightarrow \mathbb{Q}$,
 such that for all $n$ $$\left|\alpha-\phi(n)\right|<2^{-n}.$$
\end{Def} 
 The set of computable reals is denoted by $\mathbb{R}_\mathcal{C}$. Trivially, $\mq\subset\mrc$. Irrational numbers
such as $e$ and $\pi$ which can be computed with an arbitrary precision also belong to $\mrc$. However, since there
exist only countably many algorithms, the set $\mrc$ is countable, and hence a typical real number is not
computable. 

 The set of computable complex numbers is
defined by
$\mathbb{C}_\mathcal{C}=\mathbb{R}_\mathcal{C}+i\mathbb{R}_\mathcal{C}$.
Note that  $\mathbb{R}_\mathcal{C}$ (as well as $\mathbb{C}_\mathcal{C}$)
considered with the usual arithmetic operation forms a  field. 

To define computability of functions of real or complex variable we need to introduce the concept of an oracle:
\begin{Def}
A function $\phi:\mn\to\mq+i\mq$ is an oracle for $c\in\mc$ if for every $n\in\mn$ we have
$$|c-\phi(n)|<2^{-n} .$$
\end{Def}
A TM equipped with an oracle (or simply an {\it oracle TM}) may query the oracle by reading the value of $\phi(n)$ for an arbitrary $n$.
\begin{Def}
Let $S\subset \mc$. A function $f:S\to \mc$ is called computable if there exists an oracle TM $M^\phi$ with a single natural
input $n$ such that if $\phi$ is an oracle for $z\in S$ then $M^\phi$ outputs $w\in \mq+i\mq$ such that
$$|w-f(z)|<2^{-n} .$$
\end{Def}
We say that a function $f$ is {\it poly-time computable} if in the above definition the algorithm $M^\phi$ can be made to run
in time bounded by a polynomial in $n$, independently of the choice of a point $z\in S$ or an oracle representing this 
point. Note that when calculating the running time of $M^\phi$, querying $\phi$ with precision $2^{-m}$ counts as 
$m$ time units. In other words, it takes $m$ ticks of the clock to read the argument of $f$ with precision $m$ (dyadic)
digits.

 Let $d(\cdot,\cdot)$ stand for Euclidean distance between points or sets in $\mathbb{R}^2$.
 Recall the definition of the {\it Hausdorff distance} between two sets:
$$d_H(S,T)=\inf\{r>0:S\subset U_r(T),\;T\subset U_r(S)\},$$
where $U_r(T)$ stands for the $r$-neighborhood of $T$:
$$U_r(T)=\{z\in \mathbb{R}^2:d(z,T)\leqslant r\}.$$ We call
a set $T$ a {\it $2^{-n}$ approximation} of a bounded set $S$, if
$d_H(S,T)\leqslant 2^{-n}$. When we try to draw a $2^{-n}$ approximation $T$ of a set $S$
 using a computer program, it is convenient to
 let $T$ be a finite
 collection of disks of radius $2^{-n-2}$ centered at points of the form $(i/2^{n+2},j/2^{n+2})$
 for $i,j\in \mathbb{Z}$.  We will call such a set {\it dyadic}.
A dyadic set $T$ can be described using a function
\begin{eqnarray}\label{comp fun}h_S(n,z)=\left\{\begin{array}{ll}1,&\text{if}\;\;d(z,S)\leqslant 2^{-n-2},
\\0,&\text{if}\;\;d(z,S)\geqslant 2\cdot 2^{-n-2},\\
0\;\text{or}\;1&\text{otherwise},
\end{array}\right.\end{eqnarray} where $n\in \mathbb{N}$ and $z=(i/2^{n+2},j/2^{n+2}),\;i,j\in \mathbb{Z}.$\\
 \\ Using this
function, we define computability and computational
complexity of a set in $\mathbb{R}^2$ in the following way.
\begin{Def}\label{DefComputeSet} A bounded set $S\subset
\mathbb{R}^2$ is called computable in time $t(n)$ if there
is a TM, which computes values of a function $h(n,\bullet)$
of the form (\ref{comp fun}) in time $t(n)$. We say that
$S$ is poly-time computable, if there exists a polynomial
$p(n)$, such that $S$ is computable in time $p(n)$.
\end{Def}

\subsection{Renormalization and the Feigenbaum map $F$.}
In this section we recall some important notions from Renormalization Theory for quadratic like maps  and introduce the Feigenbaum map $F$. We refer the reader to \cite{DH} and \cite{McM} for details on renormalization and to \cite{Bu} for properties of the Feigenbaum map.
\begin{Def}\label{DefQL} A quadratic-like map is a ramified covering $f:U\to V$ of degree $2$, where $U\Subset V$ are topological disks. For a quadratic-like map $f$
we define its filled Julia set $K(f)$ and Julia set $J(f)$ as follows
\begin{equation}\label{EqJuliaDef} K(f)=\{z\in U:f^n(z)\in U\text{ for every }n\in\mn\},\;\;J(f)=\partial K(f).
\end{equation}
\end{Def}
\noindent
Without lost of generality, we will assume that the critical point of a quadratic-like map $f$ is at the origin.

Let $P_c(z)=z^2+c$. Then for $R$ large enough the restriction of $P_c$ onto the disk $D_R(0)=\{z:|z|<R\}$ is a quadratic-like map.
\begin{Def} Two quadratic-like maps $f_1$ and $f_2$ are said to be hybrid equivalent if there is a quasiconfromal map $\psi$ between neighborhoods of $K(f_1)$
and $K(f_2)$ such that $\bar\partial\psi=0$ almost everywhere on $K(f_1)$.
\end{Def}
\noindent
Douady and Hubbard proved the following:
\begin{Th}(Straightening Theorem) Every quadratic-like map $f$ is hybrid equivalent to a quadratic map $P_c$. If the Julia set $J(f)$ is connected, then the map $P_c$ is unique.
\end{Th}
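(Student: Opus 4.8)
The plan is to establish existence by a quasiconformal surgery that turns $f$ into a genuine degree‑two rational map, and uniqueness by a rigidity argument for quasiconformal conjugacies that are conformal on the Julia set; this is, in essence, the argument of Douady and Hubbard. For existence one first normalizes: shrinking $V$ slightly and conjugating by a Riemann map, we may assume $\partial U$ and $\partial V$ are smooth Jordan curves with $\overline U\Subset V$. Fix a conformal isomorphism $\theta:\hat{\mathbb C}\setminus V\to\{|w|\ge r\}\cup\{\infty\}$ with $r>1$ and $\theta(\infty)=\infty$, and define $\hat f:\hat{\mathbb C}\to\hat{\mathbb C}$ to equal $f$ on $\overline U$, to equal $\theta^{-1}\circ(w\mapsto w^2)\circ\theta$ on $\hat{\mathbb C}\setminus V$, and, on the fundamental annulus $N=V\setminus\overline U$, to be an orientation‑preserving smooth local diffeomorphism that is a degree‑two covering onto the annulus between $\partial V$ and $\hat f(\partial V)=\theta^{-1}(\{|w|=r^2\})$ and that restricts to $f|_{\partial U}$ and to the model on the two boundary curves; such an interpolation exists because both boundary maps are degree‑two coverings of Jordan curves. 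Then $\hat f$ is a degree‑two quasiregular endomorphism of the sphere, holomorphic off the compact annulus $\overline N$, whose only critical points, by Riemann--Hurwitz, are the critical point of $f$ and $\infty$; near $\infty$ it is $\theta$‑conjugate to $z\mapsto z^2$, so $\infty$ is a superattracting fixed point of local degree two. Writing $\mu_0$ for the Beltrami coefficient of $\hat f$ (and $0$ off $N$), we have $\|\mu_0\|_\infty=k<1$, since a smooth local diffeomorphism of a compact surface has bounded dilatation.

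The geometric core is that the forward orbit of any point meets $N$ at most once: $N$ lies in $V$, whereas $\hat f$ maps both $N$ and the exterior $\hat{\mathbb C}\setminus\overline V$ into $\hat{\mathbb C}\setminus\overline V$, which is disjoint from $N$. Hence the sets $\hat f^{-n}(N)$, $n\ge0$, are pairwise disjoint, $\hat f^{n}$ is holomorphic on $\hat f^{-n}(N)$, and — since a holomorphic pull‑back preserves the sup‑norm of a Beltrami coefficient — $\mu:=\sum_{n\ge0}(\hat f^{n})^{*}\mu_0$ is a well‑defined $\hat f$‑invariant Beltrami coefficient with $\|\mu\|_\infty=k<1$ that vanishes a.e. off $\bigcup_n\hat f^{-n}(N)$, in particular on $K(f)=K(\hat f)$. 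By the Measurable Riemann Mapping Theorem there is a quasiconformal homeomorphism $\phi$ of $\hat{\mathbb C}$ with Beltrami coefficient $\mu$, and $R:=\phi\circ\hat f\circ\phi^{-1}$, having zero Beltrami coefficient, is a quadratic rational map with a superattracting fixed point of local degree two at $\phi(\infty)$; a Möbius coordinate change sending $\phi(\infty)$ to $\infty$, followed by an affine normalization, turns $R$ into $P_c(z)=z^2+c$. Post‑composing $\phi$ with these coordinate changes yields a quasiconformal map $\Psi$ conjugating $\hat f$, hence $f$ on a neighborhood of $K(f)$, to $P_c$, with $\bar\partial\Psi=0$ a.e. on $K(f)$; so $\Psi$ is a hybrid equivalence, which proves existence.

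For uniqueness it is enough to show that if $P_{c_1}$ and $P_{c_2}$ are hybrid equivalent and have connected Julia set, then $c_1=c_2$. Given such an equivalence $\psi$ — a quasiconformal conjugacy on a neighborhood $W$ of $K(P_{c_1})$ with $\bar\partial\psi=0$ a.e. on $K(P_{c_1})$ — extend $\psi$ to a quasiconformal homeomorphism of $\hat{\mathbb C}$ and then spread the restriction of its Beltrami coefficient to one fundamental annulus of $P_{c_1}$ in the basin of $\infty$ around by the dynamics of $P_{c_1}$, obtaining a $P_{c_1}$‑invariant Beltrami coefficient $\mu$ with $\|\mu\|_\infty<1$ that still vanishes a.e. on $K(P_{c_1})$ and agrees, off that annulus, with the Beltrami coefficient of $\psi$. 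For $t\in\md$ solve the Beltrami equation for $t\mu/\|\mu\|_\infty$; the suitably normalized solutions $\Phi_t$ depend holomorphically on $t$, $\Phi_0=\id$, and $\Phi_t\circ P_{c_1}\circ\Phi_t^{-1}=P_{c(t)}$ for a holomorphic map $c:\md\to\mc$ with $c(0)=c_1$ and $c(\|\mu\|_\infty)=c_2$ (the latter: at $t=\|\mu\|_\infty$ the Beltrami coefficient solved agrees with that of $\psi$ near $K(P_{c_1})$, and a conformal conjugacy of monic–centered quadratics near a connected Julia set extends, via Böttcher coordinates, to a Möbius, hence affine, conjugacy). Each $\Phi_t$ is conformal a.e. on $K(P_{c_1})$, so every $P_{c(t)}$ is hybrid equivalent to $P_{c_1}$, hence has connected Julia set; thus $c(t)$ lies in the Mandelbrot set $M$ for all $t\in\md$.

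The one genuinely delicate step is to conclude that $c(\cdot)$ is constant — equivalently, that the straightening map is injective — and it is here that connectivity of $J(f)$ is used. In the case relevant to the Feigenbaum parameter, $c_1\in\partial M$, it is immediate: a non‑constant holomorphic map is open, so $c(\md)$ would be an open subset of $\mc$ contained in $M$ yet containing the boundary point $c_1$, a contradiction. In general one must additionally exclude any non‑trivial quasiconformal deformation of $P_{c_1}$ that is conformal on $K(P_{c_1})$: for hyperbolic parameters this is elementary, since the multiplier of the attracting cycle is a conformal — hence hybrid — invariant already parametrizing the component; the remaining interior parameters are handled through the theory of invariant line fields. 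This ingredient is due to Douady and Hubbard, and is the main obstacle in the proof.
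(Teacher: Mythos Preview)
The paper does not prove the Straightening Theorem; it is quoted as a result of Douady and Hubbard with a reference to \cite{DH}, so there is no in-paper proof to compare against. Your existence argument is the standard Douady--Hubbard quasiconformal surgery and is correct as sketched.

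Your uniqueness argument, however, has a genuine gap. You reduce to showing that the holomorphic family $t\mapsto c(t)\in M$ is constant, and you handle $c_1\in\partial M$ (open mapping theorem) and $c_1$ hyperbolic (multiplier is a hybrid invariant). For the remaining case --- a hypothetical non-hyperbolic interior component of $M$ --- you appeal to ``the theory of invariant line fields \ldots\ due to Douady and Hubbard.'' This is not right: the non-existence of such ``queer'' components is equivalent to density of hyperbolicity in $M$, which is a famous open problem, not a theorem of Douady--Hubbard. Nor does the Ma\~n\'e--Sad--Sullivan/McMullen theory help here, since your Beltrami coefficient $\mu$ is by construction supported on the basin of infinity, \emph{not} on the Julia set, so it is not the kind of line field that theory addresses. (A side issue: ``spreading'' the Beltrami of an arbitrary qc extension of $\psi$ from one fundamental annulus outward requires pushing forward under a $2$-to-$1$ map, which is not well-defined as written; this can be repaired, but you should say how.)

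The actual Douady--Hubbard uniqueness proof avoids any case analysis on the position of $c_1$ in $M$. One glues $\phi|_{K(P_{c_1})}$ with the conformal conjugacy $B_{c_2}^{-1}\circ B_{c_1}$ on $\mathbb C\setminus K(P_{c_1})$ furnished by the B\"ottcher coordinates (which exist globally precisely because $K$ is connected); the delicate step is showing this glued map is a homeomorphism, which follows from the fact that the induced qc self-map of a round annulus commuting with $z\mapsto z^2$ extends to the identity on the inner boundary circle. Rickman's lemma then gives that the glued map is $1$-quasiconformal, hence conformal, hence affine, hence the identity. This works uniformly for all $c_1$ with connected Julia set. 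Since the Feigenbaum parameter lies on $\partial M$, your argument does suffice for the application in this paper; but as a proof of the Straightening Theorem in full it is incomplete.
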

\noindent
Let $f$ be a quadratic-like map with connected Julia set.  The parameter $c$ such that $P_c$ is hybrid equivalent to $f$ is called the \emph{inner class} of $f$ and is denoted by $I(f)$.

Recall the notion of renormalization.
\begin{Def}\label{DefRenormMap} Let $f:U\to V$ be a quadratic-like map. Assume that there exists a number $n>1$ and a topological disk
 $U'\ni 0$ such that $f^n_{|U'}$ is a quadratic-like map. Then $f$ is called renormalizable with period $n$. The map $\cR f:=f^n_{|U'}$ is called a renormalization of $f$.
\end{Def}
\noindent
Observe that the domain $U'$ of $\cR f$ from the definition above is not uniquely defined.  Therefore, it is more natural to consider renormalization of germs rather than maps.
\begin{Def}\label{DefEquivGerms} We will say that two quadratic like maps $f$ and $g$ with connected Julia sets define the same germ $[f]$ of quadratic-like map if $J(f)=J(g)$ and $f\equiv g$ on a neighborhood of the Julia set.
\end{Def}
\noindent
We define the renormalization operator $\cR_2$ of period $2$ as follows.
\begin{Def}\label{DefRenormGerms} Let $[f]$ be a germ of a quadratic-like map renormalizable with period $2$. Let $U'\supset 0$ be such that $g:=f^2_{|U'}$ is a quadratic-like map. We set \[\cR_2[f]=[\alpha^{-1}\circ g\circ \alpha],\] where $\alpha(z)=g(0)z$.
\end{Def}
\noindent We have introduced the normalization $\alpha(z)$ in order to have that the critical value of the renormalized germ is at $1$.

We recall, that
the Feigenbaum parameter value $c_{Feig}\in\mr$ is defined as the limit of the parameters $c_n\in\mr$ for which 
the critical point $0$ of the quadratic polynomial is periodic with period $2^n$. 
The Feigenbaum polynomial is the map
$$P_{Feig}(z)=z^2+c_{Feig}.$$
The next theorem follows from the celebrated work of Sullivan (see \cite{dMvS}):
\begin{Th}\label{ThConvRenorm} The sequence of germs $\cR_2([P_{Feig}])$ converges to a point $[F]$. The germ $[F]$ is a unique fixed point of the renormalization operator $\cR_2$ and is hybrid equivalent to $[P_{Feig}]$.
\end{Th}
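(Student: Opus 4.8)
The plan is to follow Sullivan's approach to the convergence of renormalization, as presented in \cite{dMvS} (see also \cite{McM}), specialized to the period-doubling combinatorics. Since $c_{Feig}$ lies in the intersection of all the period-$2^n$ renormalization windows, the germ $[P_{Feig}]$ is infinitely renormalizable for $\cR_2$, so the orbit $\cR_2^n([P_{Feig}])$, $n\in\mn$, is well defined; write $\Lambda$ for its $\omega$-limit set. The argument has three ingredients: a priori bounds (to get compactness), continuity of $\cR_2$ (to get invariance of $\Lambda$), and rigidity (to collapse $\Lambda$ to a point).

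First I would invoke the \emph{complex a priori bounds}: the real bounds of Sullivan, combined with the complex bounds, guarantee that for every $n$ the germ $\cR_2^n([P_{Feig}])$ has a quadratic-like representative $g_n\colon U_n\to V_n$ with $\mathrm{mod}(V_n\setminus U_n)\ge\mu>0$ and with $V_n$ of bounded diameter containing a fixed disk around the origin, where $\mu$ does not depend on $n$. By the compactness of the space of normalized quadratic-like germs with definite modulus (with respect to uniform convergence on a fixed neighborhood of the Julia set), the sequence $\cR_2^n([P_{Feig}])$ is precompact, so $\Lambda\ne\varnothing$, and every element of $\Lambda$ is a genuine quadratic-like germ with definite modulus. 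Since the straightening of $\cR_2$ corresponds to one step of de-tuning and $c_{Feig}$ is the fixed point of the period-doubling tuning map $c\mapsto(-1)\ast c$, each $\cR_2^n([P_{Feig}])$ has inner class $c_{Feig}$; because the inner class is continuous on the space of quadratic-like germs with definite modulus, every element of $\Lambda$ is hybrid equivalent to $[P_{Feig}]$, and in particular is again infinitely $\cR_2$-renormalizable with period-doubling combinatorics. Continuity of $\cR_2$ on this space then gives $\cR_2(\Lambda)=\Lambda$.

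The decisive step is \emph{rigidity}: Sullivan's pull-back argument shows that if $[f_1]$ and $[f_2]$ are infinitely renormalizable quadratic-like germs with the same bounded combinatorics (here, period-doubling), then the Teichm\"uller distance between $\cR_2^n([f_1])$ and $\cR_2^n([f_2])$ tends to $0$ as $n\to\infty$. The mechanism is that any quasiconformal conjugacy between the two maps improves upon renormalizing: pulling the conjugacy back into the smaller renormalization domain and using the Gr\"otzsch/Koebe inequality for the nest of annuli of definite modulus supplied by the a priori bounds strictly decreases the dilatation, with a gain that is uniform because the moduli are bounded below. Applying this with $[f_1]=\cR_2^m([P_{Feig}])$ and $[f_2]=\cR_2^{m+k}([P_{Feig}])$ shows that the orbit is Cauchy in the Teichm\"uller metric of the hybrid leaf of $[P_{Feig}]$, hence — together with the a priori bounds — converges to a germ $[F]$ in the sup topology on a fixed neighborhood of the Julia set. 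By continuity $\cR_2([F])=[F]$, and $[F]$ is hybrid equivalent to $[P_{Feig}]$ since $\Lambda=\{[F]\}$. Uniqueness of the fixed point follows from the same estimate: if $\cR_2([g])=[g]$ with period-doubling combinatorics, then $d_{\mathrm{Teich}}\big(\cR_2^n([F]),\cR_2^n([g])\big)=d_{\mathrm{Teich}}\big([F],[g]\big)\to0$, forcing $[F]=[g]$.

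I expect the main obstacle to be the rigidity step: setting up the Teichm\"uller metric on the space of quadratic-like germs and quantifying the contraction of the dilatation of hybrid conjugacies under renormalization is exactly the technical core of Sullivan's theory, and it rests on the (also non-trivial) complex a priori bounds. For the purposes of this paper these facts are quoted from \cite{dMvS}.
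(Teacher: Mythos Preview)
The paper does not give its own proof of this theorem: it simply attributes the result to Sullivan and cites \cite{dMvS}. Your outline---complex a priori bounds for precompactness, invariance of the inner class under $\cR_2$, and Sullivan's pull-back/Teichm\"uller contraction for rigidity and uniqueness---is a faithful sketch of exactly the argument the paper is invoking, so there is nothing to compare.
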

\noindent
\subsection{The main result.}
Note that the germ $[F]$ from Theorem \ref{ThConvRenorm} has a well-defined quadratic-like Julia set $J_F$. We state:
\begin{mainthm}
The Julia set $J_F$ is poly-time computable.
\end{mainthm}

\section{The structure of the Feigenbaum map $F$.}
In this section we show how to compute the coefficients of the map $F$ 
and discuss the combinatorial structure of $F$.

\subsection{The combinatorial structure of $F$.}
Recall that the Feigenbaum map $F$ is a solution of  Cvitanovi\'c-Feigenbaum equation:
\begin{equation}\label{EqCvitFeig}
\left\{
\begin{array}{lll}F(z) & = & -\tfrac{1}{\lambda}F^2(\lambda z),\\
F(0) & = & 1,\\ F(z) & = & H(z^2),\;\text{with}\;H^{-1}(z)\;\text{univalent in}
\;\mc_\lambda,
\end{array}\right.
\end{equation} where $\mc_\lambda:=\mc\setminus((-\infty,-\tfrac{1}{\lambda}]\cup[\tfrac{1}{\lambda^2},\infty))$
 and $\tfrac{1}{\lambda}=2.5029\ldots$ is one of the Feigenbaum constants. From (\ref{EqCvitFeig}) we
 immediately obtain:
 \begin{equation}\label{EqF2m}
 F^{2^m}(z)=(-\lambda)^mF(\tfrac{z}{\lambda^m})
 \end{equation} whenever both sides of the equation are defined.
 Another corollary of \eqref{EqCvitFeig} is the following (cf. Epstein \cite{Ep_l}):
\begin{Prop}\label{PropX0} Let $x_0$ be the first positive preimage of $0$ by $F$.
Then \[F(\lambda x_0)=x_0,\; F(1)=-\lambda,\;F(\tfrac{x_0}{\lambda})=-\tfrac{1}{\lambda}\] and $\tfrac{x_0}{\lambda}$ is the first positive critical point of $F$.
\end{Prop}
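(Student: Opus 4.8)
The plan is to obtain the three displayed equalities by substituting carefully chosen values into the Cvitanovi\'c--Feigenbaum equation \eqref{EqCvitFeig}, and then to use the univalence hypothesis on $H^{-1}$ to pin down the critical structure of $F$ on the real line. Write the first line of \eqref{EqCvitFeig} as $F\bigl(F(\lambda z)\bigr)=-\lambda\,F(z)$. Putting $z=0$ and using $F(0)=1$ gives $F(1)=F\bigl(F(0)\bigr)=-\lambda$. Putting $z=x_0/\lambda$ and using $F(x_0)=0$ and $F(0)=1$ gives $1=F(0)=F\bigl(F(x_0)\bigr)=-\lambda\,F(x_0/\lambda)$, hence $F(x_0/\lambda)=-1/\lambda$. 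Differentiating the same identity gives $F'\bigl(F(\lambda z)\bigr)F'(\lambda z)=-F'(z)$; at $z=x_0/\lambda$ the left side equals $F'\bigl(F(x_0)\bigr)F'(x_0)=F'(0)F'(x_0)=0$ (the critical point of $F$ is at the origin), so $F'(x_0/\lambda)=0$ and $x_0/\lambda$ is a critical point of $F$. Finally, putting $z=x_0$ gives $F\bigl(F(\lambda x_0)\bigr)=-\lambda\,F(x_0)=0$, so $F(\lambda x_0)$ is one of the preimages of $0$; it remains to determine which one, and that $x_0/\lambda$ is the \emph{first} positive critical point.

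Both remaining points reduce to showing that $F$ is strictly decreasing on $[0,x_0/\lambda]$. For this I would use the third line of \eqref{EqCvitFeig}. Since $H^{-1}$ is univalent on $\mc_\lambda=\mc\setminus\bigl((-\infty,-1/\lambda]\cup[1/\lambda^2,\infty)\bigr)$, the inverse $H$ is a conformal isomorphism of the simply connected domain $\Omega:=H^{-1}(\mc_\lambda)$ onto $\mc_\lambda$, so $H'$ is nonvanishing on $\Omega$; since $F(z)=H(z^2)$ is real on $\mr$ with a nondegenerate local maximum $F(0)=1$, the set $\Omega\cap\mr$ is an interval $(t_-,t_+)$ with $t_-<0<t_+$ on which $H$ is a strictly decreasing bijection onto $(-1/\lambda,1/\lambda^2)$. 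Consequently $F$ is strictly decreasing on $[0,\sqrt{t_+}\,]$, with $0$ as its only critical point in $[0,\sqrt{t_+}\,)$, with $F(\sqrt{t_+}\,)=-1/\lambda$ (a boundary value), and with $F(x_0)=0$ forcing $x_0<\sqrt{t_+}$. A key a priori input is $1<\sqrt{t_+}$: indeed $H(1)=F(1)=-\lambda$ lies in $\mc_\lambda$ (as $-1/\lambda<-1<-\lambda<0<1/\lambda^2$), so $1\in\Omega\cap\mr=(t_-,t_+)$. Given this, for $z\in[0,x_0/\lambda]$ one has $\lambda z\in[0,x_0]$ and hence $F(\lambda z)\in[0,1]\subset[0,\sqrt{t_+}\,)$; since $F$ is strictly decreasing on $[0,x_0]$ and on $[0,1]$, the map $z\mapsto F\bigl(F(\lambda z)\bigr)$ is strictly increasing, so $F(z)=-\tfrac{1}{\lambda}F\bigl(F(\lambda z)\bigr)$ is strictly decreasing on $[0,x_0/\lambda]$. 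Comparing: $F$ takes the value $-1/\lambda$ at $\sqrt{t_+}$ and at $x_0/\lambda$ and is injective on $[0,\max(\sqrt{t_+},x_0/\lambda)]$, so $x_0/\lambda=\sqrt{t_+}$, which is the first positive critical point of $F$. Finally, from $0<\lambda x_0<x_0<x_0/\lambda$ we get $F(\lambda x_0)\in F\bigl((0,x_0)\bigr)=(0,1)\subset(0,x_0/\lambda)$, an interval in which $x_0$ is the only preimage of $0$; with $F\bigl(F(\lambda x_0)\bigr)=0$ this yields $F(\lambda x_0)=x_0$.

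The substitutions are immediate; the step I expect to be the main obstacle is the middle one — showing $F$ is strictly decreasing on all of $[0,x_0/\lambda]$ with its only fold at the endpoint. This requires translating the abstract univalence of $H^{-1}$ on $\mc_\lambda$ into a concrete description of $\Omega\cap\mr$ and of $F|_\mr$ near the origin (in particular the a priori estimate $1<\sqrt{t_+}$), and then propagating monotonicity outward through the functional equation; this is precisely where the geometry of the Cvitanovi\'c--Feigenbaum domain $\mc_\lambda$ enters, and where I would lean on Epstein's analysis \cite{Ep_l}. As an alternative bookkeeping device for the last claim, note that \eqref{EqF2m} gives $(F^2)'(z)=-F'(z/\lambda)$, so that $\mathrm{Crit}(F^2)=\lambda\cdot\mathrm{Crit}(F)$; this makes the identity ``first positive critical point of $F$ $=x_0/\lambda$'' transparent once one knows that the first positive zero $x_0$ of $F$ precedes its first positive critical point.
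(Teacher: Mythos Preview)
The paper does not give its own proof of this proposition; it simply cites Epstein \cite{Ep_l}. Your argument is correct and is essentially Epstein's approach: the three displayed identities drop out of direct substitutions into \eqref{EqCvitFeig}, and the remaining claims (that $x_0/\lambda$ is the \emph{first} positive critical point and that $F(\lambda x_0)=x_0$ rather than some other preimage of $0$) follow from the monotonicity of $F$ on $[0,x_0/\lambda]$, which you extract from the univalence of $H^{-1}$ on $\mc_\lambda$ and then bootstrap via the functional equation. One small point worth making explicit: the claim that $\Omega\cap\mr$ is a single interval holds because $H^{-1}$ has real Taylor coefficients (since $F$ does), so it carries the connected set $\mc_\lambda\cap\mr=(-1/\lambda,1/\lambda^2)$ bijectively onto $\Omega\cap\mr$; and $H'(0)\ne0$ (hence $H'(0)<0$) follows already from the univalence of $H^{-1}$ at $H(0)=1\in\mc_\lambda$. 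With these in hand your bootstrapping step is clean, and the conclusion $\sqrt{t_+}=x_0/\lambda$ is forced exactly as you say.
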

 A map $g:U_g\to\mc$ is called an analytic extension of a map $f:U_f\to \mc$ if
 $f$ and $g$ are equal on some open set. An extension $\hat f:S\supset U\to\mc$ of $f$ is called a maximal analytic extension if every analytic extension of $f$ is a restriction of $\hat f$. The following crucial observation is also due to H.~Epstein (cf. 
\cite{Ep_l,Bu}): 
\begin{Th} The map $F$ has a maximal analytic extension 
$\hat F:\hat W\to\mc,$ where $\hat W\supset \mr$ is an open simply connected set.
\end{Th}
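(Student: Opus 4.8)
The plan is to construct $\hat F$ explicitly: start from the region where the $H(z^2)$--decomposition in \eqref{EqCvitFeig} already produces $F$, and then repeatedly pull back along the Cvitanovi\'c--Feigenbaum relation $F(z)=-\tfrac1\lambda F^2(\lambda z)$. Since $H^{-1}$ is univalent on $\mc_\lambda$, the map $H=(H^{-1})^{-1}$ is a conformal isomorphism of the simply connected domain $\Omega_0:=H^{-1}(\mc_\lambda)$ onto $\mc_\lambda$; as $F(0)=1\in\mc_\lambda$ we have $0\in\Omega_0$. Put $W_0:=\{z:z^2\in\Omega_0\}$ and $F_0(z):=H(z^2)$ on $W_0$. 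This $W_0$ is $z\mapsto -z$--symmetric, contains a real neighbourhood of $0$, and is simply connected: $z\mapsto z^2$ is a degree--$2$ branched cover of $\mc$ whose only critical value $0$ lies in $\Omega_0$, so $W_0\setminus\{0\}$ is the connected double cover of $\Omega_0\setminus\{0\}$ (whose fundamental group is infinite cyclic), and filling the point $0$ back in kills the generator. This is the base of the construction.

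Next I would build, by induction, an increasing chain $W_0\subset W_1\subset\cdots$ of simply connected domains, each containing $W_0$ and a real interval, carrying analytic maps $F_n:W_n\to\mc$ extending $F_0$. Given $(W_n,F_n)$, the functional equation suggests
\[
W_{n+1}:=W_n\cup\bigl\{\,z\in\lambda^{-1}W_n:\ F_n(\lambda z)\in W_n\,\bigr\},\qquad F_{n+1}(z):=-\tfrac1\lambda\,F_n\!\bigl(F_n(\lambda z)\bigr)
\]
on the new part, where one retains the connected component containing the relevant real arc so that $W_{n+1}$ stays a domain. Four points must be verified at each step: (i) $F_{n+1}$ agrees with $F_n$ on the overlap, which is immediate from the identity principle since both functions are analytic and coincide with $F$ on a real interval; (ii) $W_{n+1}$ is open (a finite intersection of preimages of open sets under continuous maps, unioned with the open $W_n$); (iii) $W_{n+1}$ is again simply connected; (iv) the chain strictly grows and eventually exhausts $\mr$ — here one uses the boundary behaviour of $F$ at the points $x_0/\lambda,\ \lambda x_0,\ 1$ recorded in Proposition~\ref{PropX0}, together with the scaling identity \eqref{EqF2m}, to show that any prescribed real point is moved into some $W_n$ by the pull--back operations combined with $z\mapsto\lambda^{\pm1}z$.

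Having produced the chain, set $\hat W:=\bigcup_n W_n$ and $\hat F:=\bigcup_n F_n$. Then $\hat W$ is an open connected set containing $\mr$, and it is simply connected because an increasing union of simply connected planar domains is simply connected; $\hat F$ is a well-defined analytic function by (i). For maximality, let $g:U_g\to\mc$ be any analytic extension of $F$. By the identity principle $g\equiv\hat F$ on $U_g\cap\hat W$, so it suffices to see $U_g\subseteq\hat W$. But $g$ inherits the relation $g(z)=-\tfrac1\lambda g(g(\lambda z))$ wherever its right-hand side is defined (again by the identity principle, continued from the original domain of $F$), so following any continuation path from $W_0$ realizes each point of $U_g$ by precisely the pull--back steps used to build the $W_n$; hence $U_g\subseteq\bigcup_n W_n=\hat W$, and $\hat F$ is maximal.

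The main obstacle is step (iii): controlling the topology of the pull--back $\{z:F_n(\lambda z)\in W_n\}$. In the relevant region the map $z\mapsto F_n(\lambda z)$ is a degree--$2$ branched cover ramified only over its single critical value $F(0)=1$ — this is the $H(z^2)$ structure of $F$ propagated through \eqref{EqCvitFeig} — and $1\in\mr\subset\overline{W_n}$, so the same $\pi_1$ argument used for $W_0$ applies once one knows that the pull--back picks up the ramification point and attaches to $W_n$ along a connected arc without producing spurious extra components. Proving exactly this, i.e. that the critical value sits inside $W_n$ in the right way and that the new region is glued on cleanly, is the technical heart of the argument, and is where the fine real--line combinatorics of $F$ from Proposition~\ref{PropX0} is used in an essential way.
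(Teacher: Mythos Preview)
The paper does not prove this theorem; it is quoted from Epstein's work (with the citation \emph{cf.}\ \cite{Ep_l,Bu}) and used as a black box. So there is no ``paper's own proof'' to compare against. Your plan is, in outline, Epstein's construction: start from the Herglotz representation $F=H(z^2)$ with $H$ univalent, and repeatedly enlarge the domain via the Cvitanovi\'c--Feigenbaum relation.

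That said, two of your steps are genuinely incomplete. First, the assertion in your last paragraph that ``$z\mapsto F_n(\lambda z)$ is a degree--$2$ branched cover ramified only over its single critical value $F(0)=1$'' is false already for $n=1$: your own construction produces new critical points at each stage (for instance $\pm x_0/\lambda$ appear in $W_1$, since $F_0(\pm x_0)=0$ is a critical point of $F_0$ in the target), so $F_n$ has roughly $2^n$ critical points and just as many critical values on $\mr$. The $\pi_1$ argument you used for $W_0$ therefore does not transfer; Epstein's route to simple connectivity instead goes through the tile picture of Theorem~\ref{ThCrit} (each component of $F^{-1}(\mh_\pm)$ is a topological disk mapped univalently onto a half-plane), and one shows that $\hat W$ is a tree of such disks glued along real boundary arcs.

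Second, your maximality argument does not close. You claim that any analytic extension $g:U_g\to\mc$ satisfies $U_g\subset\hat W$ because ``following any continuation path from $W_0$ realizes each point of $U_g$ by precisely the pull--back steps''. But the functional equation expresses $g(z)$ via $g(\lambda z)$ and $g(g(\lambda z))$; for an arbitrary domain $U_g$ there is no reason either of those points lies in $U_g$, let alone in $W_0$, so nothing forces $U_g$ to be exhausted by your inductive pull--backs. The correct statement (and what Epstein proves) is that $\hat F$ cannot be continued across any boundary point of $\hat W$: one shows $\partial\hat W$ is a tree on which the iterated preimages of the critical orbit accumulate, so $\hat F$ is unbounded near every boundary point. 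That is a statement about $\hat W$ itself, not about a hypothetical competitor $U_g$.
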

\noindent 
For simplicity of notation, in what follows we will routinely identify $F$ with its maximal analytic extension $\hat F$.

Set $\mh_+=\{z:\im z>0\}$ and $\mh_-=\{z:\im z<0\}$. For a proof of the following, see \cite{Ep_l,Bu}:
\begin{Th}\label{ThCrit} All critical points of $F$ are simple. The critical values of $F$ are contained in real axis. Moreover, for any $z\in \hat W$ such that $F(z)\notin\mr$ there exists a bounded open set $U(z)\ni z$ such that $F$ is one-to-one on $U(z)$ and $F(U(z))=\mh_\pm$.
\end{Th}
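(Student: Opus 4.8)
The plan is to read everything off the two structural facts already available: on its first domain of definition $F$ factors as $F=H\circ Q$ with $Q(z)=z^2$ and $H^{-1}$ univalent on $\mc_\lambda$, while globally $F$ satisfies the Cvitanovi\'c--Feigenbaum relation $F(z)=-\tfrac1\lambda F(F(\lambda z))$, which is the very mechanism producing the maximal extension $\hat F:\hat W\to\mc$. Write $\Omega=H^{-1}(\mc_\lambda)$, so that $H:\Omega\to\mc_\lambda$ is a conformal isomorphism and $\hat W_0=\{z:z^2\in\Omega\}$ is the first domain, on which $F=H\circ Q$. In Epstein's construction $\hat W=\bigcup_{m\ge 0}\hat W_m$, where $\hat W_{m+1}$ is obtained from $\hat W_m$ by one pullback via the functional equation (evaluating $F$ at $z$ amounts to evaluating $F$ at $\lambda z$ and then $F$ at that value); the induction runs on this level $m$.

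For the local picture at the origin: on $\hat W_0$ one has $F'(z)=2zH'(z^2)$, and $H'$ never vanishes because $H$ is univalent, so the only critical point in $\hat W_0$ is $0$, it is simple since $F''(0)=2H'(0)\ne 0$, and its critical value is $F(0)=H(0)=1\in\mr$. For the inductive step, differentiate the functional equation to get $F'(z)=-F'(F(\lambda z))\,F'(\lambda z)$ on $\hat W$. If $z\ne 0$ is a critical point at level $m+1$, then $\lambda z$ or $F(\lambda z)$ is a critical point at level $\le m$; using the inductive hypotheses (critical points simple, critical values real) together with $F(\mr)\subset\mr$, the identity $F(z)=-\tfrac1\lambda F(F(\lambda z))$ shows $F(z)\in\mr$, so all critical values are real. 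The multiplicity count closes provided no critical value of $F$ is itself a critical point: then exactly one of the factors $F'(\lambda z)$, $F'(F(\lambda z))$ vanishes at $z$, and it does so to first order, so $z$ is simple. This no-coincidence property of the critical orbit is the single genuinely delicate input, and is precisely where Epstein's fine study of the Cvitanovi\'c--Feigenbaum solution enters; morally it reflects the fact that the infinitely renormalizable $F$ has no periodic critical orbit.

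For the third assertion, fix $z_0\in\hat W$ with $F(z_0)\in\mh_+$ (the case $\mh_-$ is symmetric). Since the rays removed in the definition of $\mc_\lambda$ lie on $\mr$, we have $\mh_+\subset\mc_\lambda$, and by the previous step $\mh_+$ contains no critical value of $F$, so $\hat F$ is unramified over $\mh_+$; maximality of $\hat W$ forces $\hat F$ to be a covering of $\mh_+$ (the lift of a path in $\mh_+$ cannot escape $\hat W$ without contradicting maximal continuability of $F$). As $\mh_+$ is simply connected, the component $U(z_0)$ of $\hat F^{-1}(\mh_+)$ containing $z_0$ maps biholomorphically onto $\mh_+$ — equivalently, the branch of $F^{-1}$ through $z_0$ continues univalently over all of $\mh_+$, with image $U(z_0)$. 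Boundedness of $U(z_0)$ belongs to the same circle of facts about $\hat W$: it can be extracted from the $F=H\circ Q$ description at level $0$, where $Q^{-1}(H^{-1}(\mh_+))$ splits into two disjoint simply connected pieces each mapped conformally onto $\mh_+$, and transported up the finitely many levels by the functional equation. The main obstacle throughout is the careful bookkeeping of Epstein's maximal domain: pinning down its level filtration, checking that a pullback strictly lowers the level, establishing the covering/properness property over $\mh_\pm$, and excluding coincidences between the critical orbit and the critical set.
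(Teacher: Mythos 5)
The paper itself contains no proof of this theorem: it is quoted directly from Epstein's lecture notes and Buff's paper (``For a proof of the following, see \cite{Ep_l,Bu}''). So there is no ``paper's own proof'' to compare against; I can only assess your outline on its merits. Your general scheme --- the factorization $F=H\circ Q$ at level zero, the level filtration of $\hat W$, propagation of information about the critical set via the chain rule $F'(z)=-F'(F(\lambda z))\,F'(\lambda z)$, and obtaining the ``tiles'' as components of $\hat F^{-1}(\mh_\pm)$ once one knows no critical value lies off $\mr$ --- is indeed the right type of argument and broadly in the spirit of those references.

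But there are genuine gaps. The most serious is the one you flag yourself, the ``no-coincidence'' statement: to make $z$ a simple critical point you need exactly one of the two factors to vanish, and to first order, which requires that no critical point of $F$ maps under $F$ to a critical point of $F$. You offer only the moral explanation that $F$ has no periodic critical orbit, but that does not touch the actual content of the claim: the critical set of $\hat F$ is far larger than $\{0\}$ (Lemma \ref{LmCritPoints} already gives $x_0/\lambda$, $a/\lambda$, $x_0/\lambda^2$ on $\mr_+$, with many more produced at each level), and what you need is that none of these points is ever a critical value, not merely that $0$ avoids itself. Without this the multiplicity count in your induction simply does not close; it is exactly here that Epstein's and Buff's fine structure theory of $\hat W$ earns its keep, and it cannot be relegated to a remark. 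There is also a bookkeeping point inside the same induction that you gesture at but do not pin down: for the step to make sense you must know that when $z$ sits at level $m+1$, both $\lambda z$ and $F(\lambda z)$ sit at level $\le m$, so that the inductive hypothesis applies to the two factors; this is a statement about how the level filtration interacts with the functional equation, not a formality.

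Two further soft spots in the third assertion. The covering argument over $\mh_+$ requires more than the phrase ``the lift cannot escape $\hat W$'': a maximal lift of a path in $\mh_+$ could a priori accumulate on $\partial\hat W$ or run to infinity, and one must argue that either alternative would yield an analytic continuation of $F$ across such a boundary point, contradicting maximality --- the local invertibility of $F$ near the target point in $\mh_+$ is the key input, but the argument that the preimage accumulates at a single finite point where continuation is possible is what actually needs writing down. Finally, boundedness of $U(z)$ is asserted from the level-$0$ picture, but $\mc_\lambda$ is unbounded and it is not automatic that $\Omega=H^{-1}(\mc_\lambda)$, hence $Q^{-1}(H^{-1}(\mh_\pm))$, is bounded; this is again a structural fact about the Epstein domain, and the ``transport up the finitely many levels'' by the functional equation has to be checked to preserve boundedness rather than assumed.
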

\label{th-tiles}
We illustrate the statement of the theorem in Figure \ref{fig-tiles} (a very similar figure appears in X.~Buff's paper \cite{Bu}). The lighter and darker ``tiles'' are the bounded connected components of the preimage of $\mh_+$ and $\mh_-$ respectively. Black tree is the boundary of the domain $\hat W$.  In the gray region, colors cannot be effectively rendered at the given resolution.

 \begin{figure}\centering\includegraphics[width=0.95\textwidth]{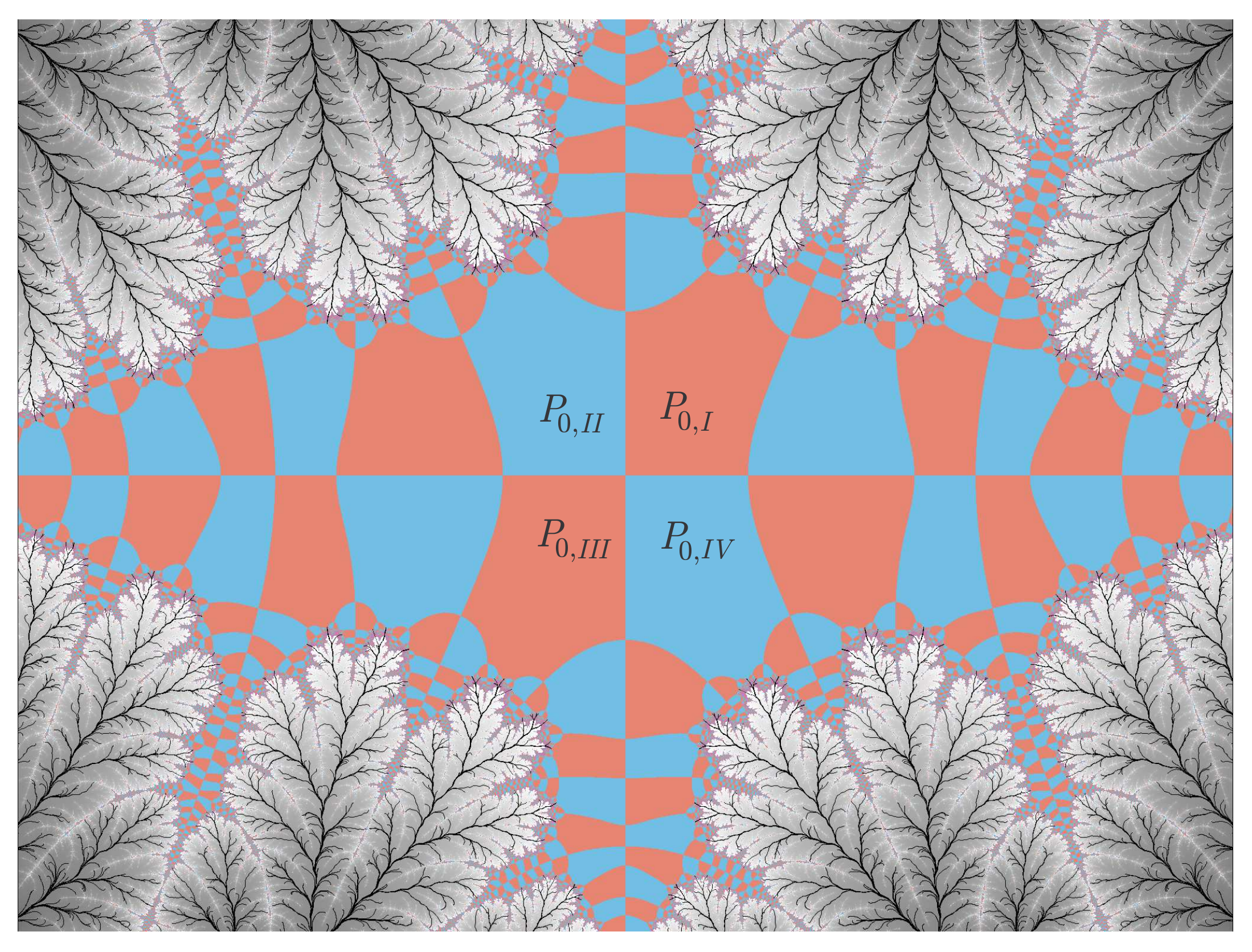}\caption{\label{fig-tiles}Illustration to Theorem \ref{th-tiles}. We thank Scott Sutherland for computing this image for us. } \end{figure} 

Following \cite{Bu}, we introduce the following combinatorial partition of $\hat W$.
\begin{Def} Denote by $\cp$ the set of all connected components of $F^{-1}(\mc\setminus \mr)$. Set\[\cp^{(n)}=\{\lambda^nP:P\in\cp.\}\]
\end{Def}
Thus, for any $P\in\cp$ the map $F$ sends $P$ one-to-one either onto $\mh_+$ or onto $\mh_-$. Notice that $\cp$ is invariant under multiplication by $-1$ and under complex conjugation. Using 
Cvitanovi\'c-Feigenbaum equation we obtain the following:
\begin{Lm}\label{LmPartition} For any $n$, the partition $\cp^{(n)}$ coincides with the set of connected componets of the preimage
of $\mc\setminus\mr$ under $F^{2^n}$. Moreover, one has:
\begin{itemize}
\item[1)] for any $m\in \mn\cup\{0\}$, $m>n$ and any $P\in \cp^{n}$ the iterate $F^{2^n-2^m}$ maps $P$ bijectively onto some $Q\in \cp^{(m)}$;
\item[2)] for any $n\in\mn,s\in\mn,s\le 2^{n-1}$ and any $P\in \cp^{(n)}$ there exists $Q_0\in\cp^{(n)},Q_1\in\cp^{(n-1)}$ such that $Q_0\subset F^s(P)\subset Q_1$.
\end{itemize}
\end{Lm}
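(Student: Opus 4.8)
The plan is to derive the lemma from two facts: the scaling identity \eqref{EqF2m}, $F^{2^n}(z)=(-\lambda)^nF(z/\lambda^n)$ with $\lambda\in\mr$, and Theorem \ref{ThCrit}, which I read as saying that each $P\in\cp$ is mapped by $F$ homeomorphically onto $\mh_+$ or onto $\mh_-$. I would first record three elementary remarks. (a) Since $F(\bar z)=\overline{F(z)}$, $F$ maps $\mr$ into $\mr$, so no tile of $\cp$ meets $\mr$; being connected, each $P\in\cp$ lies entirely in $\mh_+$ or in $\mh_-$, and likewise every $Q\in\cp^{(k)}=\lambda^k\cp$, since $\lambda^k$ is a nonzero real. (b) Since $\cp\neq\emptyset$ is conjugation-invariant and conjugation swaps the half-planes, each of $\mh_+$, $\mh_-$ contains a tile of $\cp$, hence of every $\cp^{(k)}$. (c) Post-composing Theorem \ref{ThCrit} with multiplication by the real $(-\lambda)^n$ and invoking \eqref{EqF2m}, $F^{2^n}$ maps every $Q\in\cp^{(n)}$ homeomorphically onto $\mh_+$ or onto $\mh_-$.

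\emph{Step 1 (the partition statement and a key observation).} Identifying $F^{2^n}$ with its maximal analytic extension, \eqref{EqF2m} gives that $F^{2^n}$ has domain $\lambda^n\hat W$ and equals $z\mapsto(-\lambda)^nF(z/\lambda^n)$ there; since applying the real scaling $(-\lambda)^n$ does not change whether a value is real, $(F^{2^n})^{-1}(\mc\setminus\mr)=\lambda^n\bigl(F^{-1}(\mc\setminus\mr)\bigr)$. Because $z\mapsto\lambda^nz$ is a homeomorphism of the plane and so carries components to components, the components of $(F^{2^n})^{-1}(\mc\setminus\mr)$ are exactly the sets $\lambda^nP$, $P\in\cp$, that is, $\cp^{(n)}$. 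I would also record the observation underlying part 2: if $z$ lies in a tile $Q\in\cp^{(n)}$, then $F^j(z)\notin\mr$ for all $0\le j\le2^n$ — otherwise $\mr$-invariance of $F$ would force $F^{2^n}(z)=F^{2^n-j}\bigl(F^j(z)\bigr)\in\mr$, contradicting (c).

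\emph{Step 2 (part 1 and nesting of tiles).} I would prove part 1 in the natural form: for $0\le m\le n$ and $P\in\cp^{(n)}$, the iterate $F^{2^n-2^m}$ maps $P$ homeomorphically onto a tile of $\cp^{(m)}$. From $F^{2^n}=F^{2^m}\circ F^{2^n-2^m}$ and injectivity of $F^{2^n}|_P$ (remark (c)), the map $F^{2^n-2^m}|_P$ is injective, so $Q:=F^{2^n-2^m}(P)$ is an open connected subset of $(F^{2^m})^{-1}(\mc\setminus\mr)$, hence of a single tile $Q'\in\cp^{(m)}$; since $F^{2^m}|_{Q'}$ is a homeomorphism onto a half-plane, necessarily onto $F^{2^m}(Q)$, and a restriction of an injective surjection that is still surjective fills the whole domain, $Q=Q'$. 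The same bijectivity argument, applied to any $T\in\cp^{(k)}$ and a level-$k$ tile inside the half-plane $F^{2^k}(T)$ (which exists by (b)), shows that its $F^{2^k}|_T$-preimage is a tile of $\cp^{(k+1)}$ inside $T$; iterating, every tile of every level, and every half-plane, contains a tile of each higher level.

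\emph{Step 3 (part 2).} Fix $n\ge1$, $1\le s\le2^{n-1}$, $P\in\cp^{(n)}$; $W:=F^s(P)$ is open and connected ($F^s|_P$ being injective), and since $2^{n-1}+s\le2^n$ the key observation gives $F^{2^{n-1}}(W)=F^{2^{n-1}+s}(P)\subset\mc\setminus\mr$, whence $W\subset(F^{2^{n-1}})^{-1}(\mc\setminus\mr)$ and so $W\subset Q_1$ for a unique $Q_1\in\cp^{(n-1)}$: the upper bound. The lower bound is the main obstacle; I would obtain it by induction on $N\ge1$ of the slightly stronger assertion \emph{for every $1\le t\le2^N$ and $T\in\cp^{(N)}$, $F^t(T)$ contains a tile of $\cp^{(N)}$} (the lemma's lower bound is the case $N=n$, $t=s$). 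The base case $N=1$ follows from part 1 and the nesting fact together with (b)--(c). For the step, put $T':=F^{2^{N-1}}(T)\in\cp^{(N-1)}$ by part 1. If $2^{N-1}<t\le2^N$, then $F^t(T)=F^{t-2^{N-1}}(T')$ with $1\le t-2^{N-1}\le2^{N-1}$, so by induction $F^t(T)$ contains a tile of $\cp^{(N-1)}$, which by nesting contains one of $\cp^{(N)}$. If $1\le t\le2^{N-1}$, then, exactly as in the upper bound, $F^t(T)$ lies in a single $Q_1\in\cp^{(N-1)}$; the homeomorphism $F^{2^{N-1}}|_{Q_1}$ carries $F^t(T)$ onto $F^{2^{N-1}+t}(T)=F^t(T')$, which by induction contains some $T_0\in\cp^{(N-1)}$, and the $F^{2^{N-1}}|_{Q_1}$-preimage of $T_0$ — a tile of $\cp^{(N)}$ by the bijectivity argument of Step 2 — is contained in $F^t(T)$. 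The one genuinely delicate point is the bookkeeping in this induction: keeping the time parameter within its admissible range while stepping the level down by one via part 1; the remainder is a direct translation of \eqref{EqF2m} and the tiling structure of Theorem \ref{ThCrit}.
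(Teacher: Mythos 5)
The paper states Lemma~\ref{LmPartition} without proof (the only indication given is the sentence ``Using Cvitanovi\'c--Feigenbaum equation we obtain the following''), so there is no argument in the paper to compare against; your write-up supplies the missing proof, and it is correct. A few remarks. You silently corrected a typo in the statement of part~1: as printed, the range ``$m>n$'' is incompatible with the exponent $2^n-2^m$, and your reading $0\le m\le n$ is the one consistent both with the sign of the exponent and with how the lemma is later invoked (e.g.\ in the proofs of Lemma~\ref{LmZkFin} and Proposition~\ref{PropTilingStr}, where $\cp^{(m_k)}$ is pushed forward to $\cp^{(m_k-1)}$). Your three preliminary remarks (a)--(c) plus the scaling identity \eqref{EqF2m} give the partition claim and the ``key observation'' $F^j(z)\notin\mr$ for $0\le j\le 2^n$ directly; Step~2 then correctly derives part~1 by composing $F^{2^n}=F^{2^m}\circ F^{2^n-2^m}$ and using injectivity from (c), and the nesting claim --- every tile of $\cp^{(k)}$ contains a tile of $\cp^{(k+1)}$ --- is the genuinely new ingredient you introduce and is justified correctly by the pullback-of-a-tile argument. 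For part~2, the upper bound $F^s(P)\subset Q_1$ is immediate from the key observation; the lower bound requires the strengthened inductive statement you formulate (allowing $t$ up to $2^N$ rather than $2^{N-1}$), and your two-case analysis in the inductive step, together with the same pullback argument to identify $(F^{2^{N-1}}|_{Q_1})^{-1}(T_0)$ as a tile of $\cp^{(N)}$, closes the induction. I verified the base case $N=1$ and the inductive step for both ranges of $t$; the bookkeeping you flag as delicate does check out.
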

%
%
%
Let us describe the structure of $F$ on the real line near the origin. Since $F$ maps $[1,\tfrac{x_0}{\lambda}]$ homeomorphically onto $[-\tfrac{1}{\lambda},-\lambda]$ there exists a unique $a\in (1,\tfrac{x_0}{\lambda})$ such that $F(a)=-\tfrac{x_0}{\lambda}$.
\begin{Lm}\label{LmCritPoints} The first three positive critical points of $F$ counting from the origin are $\tfrac{x_0}{\lambda},\tfrac{a}{\lambda},\tfrac{x_0}{\lambda^2}$. One has:\[F(\tfrac{x_0}{\lambda})=-\tfrac{1}{\lambda},\;F(\tfrac{a}{\lambda})=\tfrac{1}{\lambda^2}.\]
\end{Lm}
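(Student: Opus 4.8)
The plan is to differentiate the Cvitanovi\'c--Feigenbaum equation in the form $F(z)=-\tfrac1\lambda F(F(\lambda z))$: the chain rule gives $F'(z)=-F'(F(\lambda z))\,F'(\lambda z)$, so $F'(z)=0$ exactly when $\lambda z$ is a critical point of $F$ \emph{or} $F(\lambda z)$ is a critical point of $F$. This is the only structural input needed; everything else is bookkeeping on $\mr$, where $F$ is real and even.

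First I would record the relevant real inequalities. By Proposition \ref{PropX0} the point $\tfrac{x_0}{\lambda}$ is the first positive critical point, so $F$ is strictly monotone on $[0,\tfrac{x_0}{\lambda}]$ --- decreasing, since $F(0)=1$ --- and passes through $F(x_0)=0$, $F(1)=-\lambda$, $F(\tfrac{x_0}{\lambda})=-\tfrac1\lambda$. Combined with $a\in(1,\tfrac{x_0}{\lambda})$ and $F(a)=-\tfrac{x_0}{\lambda}$, this forces $\lambda<x_0<1<a<\tfrac{x_0}{\lambda}<\tfrac1\lambda$, hence $\tfrac{x_0}{\lambda}<\tfrac{a}{\lambda}<\tfrac{x_0}{\lambda^2}$ together with $\tfrac1\lambda<\tfrac{a}{\lambda}$ and $\tfrac1\lambda<\tfrac{x_0}{\lambda^2}$. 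In particular $F$ restricts to decreasing homeomorphisms $(x_0,1)\to(-\lambda,0)$ and $(x_0,\tfrac{x_0}{\lambda})\to(-\tfrac1\lambda,0)$, and $-\tfrac{x_0}{\lambda}\in(-\tfrac1\lambda,0)$.

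The core step is to determine all critical points of $F$ near $0$. Since the smallest positive critical point is $\tfrac{x_0}{\lambda}>1>\lambda$ and the critical set is symmetric about $0$, $F$ has no critical point in $(-\lambda,\lambda)\setminus\{0\}$. Bootstrapping once: for $z\in(\tfrac{x_0}{\lambda},\tfrac1\lambda)$ one has $\lambda z\in(x_0,1)$, so $F'(\lambda z)\neq0$ (no critical point there), while $F(\lambda z)\in(-\lambda,0)$, which contains no critical point of $F$; hence $z$ is not critical. Together with the definition of $\tfrac{x_0}{\lambda}$ this shows the only critical points of $F$ in $(-\tfrac1\lambda,\tfrac1\lambda)$ are $0$ and $\pm\tfrac{x_0}{\lambda}$. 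Now for any $z\in(\tfrac{x_0}{\lambda},\tfrac{x_0}{\lambda^2})$ we get $\lambda z\in(x_0,\tfrac{x_0}{\lambda})$, so again $F'(\lambda z)\neq0$ and $F(\lambda z)\in(-\tfrac1\lambda,0)$; by the previous sentence $z$ is critical iff $F(\lambda z)=-\tfrac{x_0}{\lambda}$, i.e. (by injectivity of $F$ on $(x_0,\tfrac{x_0}{\lambda})$ and $F(a)=-\tfrac{x_0}{\lambda}$ with $a$ in that interval) iff $z=\tfrac{a}{\lambda}$. Finally $\tfrac{x_0}{\lambda^2}$ is critical because $\lambda\cdot\tfrac{x_0}{\lambda^2}=\tfrac{x_0}{\lambda}$ is, via the differentiated equation. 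Thus the first three positive critical points are $\tfrac{x_0}{\lambda}<\tfrac{a}{\lambda}<\tfrac{x_0}{\lambda^2}$.

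The values then follow: $F(\tfrac{x_0}{\lambda})=-\tfrac1\lambda$ is Proposition \ref{PropX0}, and $F(\tfrac{a}{\lambda})=-\tfrac1\lambda F(F(a))=-\tfrac1\lambda F(-\tfrac{x_0}{\lambda})=-\tfrac1\lambda F(\tfrac{x_0}{\lambda})=\tfrac1{\lambda^2}$ using the functional equation and evenness of $F$. The one point demanding care is keeping the case analysis on $F'(z)=-F'(F(\lambda z))F'(\lambda z)$ exhaustive: on each sub-interval one must check that $\lambda z$ stays strictly below the first critical point $\tfrac{x_0}{\lambda}$, so that $F'(\lambda z)\neq0$ provably, and that $F(\lambda z)$ lands in an interval whose critical points have already been identified --- which is exactly what the inequalities collected in the second step guarantee.
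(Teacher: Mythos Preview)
Your argument is correct and uses the same engine as the paper: differentiate the Cvitanovi\'c--Feigenbaum equation to get $F'(z)=-F'(\lambda z)\,F'(F(\lambda z))$, then combine this with the fact (from Proposition~\ref{PropX0}) that $\tfrac{x_0}{\lambda}$ is the first positive critical point. The only difference is organizational: the paper argues by descent, taking a hypothetical minimal ``extra'' critical point $b\in(0,\tfrac{x_0}{\lambda^2})$ and showing that $F(\lambda b)$ would furnish a smaller one, whereas you run the same case analysis forward as a two-step bootstrap --- first ruling out critical points in $(\tfrac{x_0}{\lambda},\tfrac1\lambda)$, then using that to classify those in $(\tfrac{x_0}{\lambda},\tfrac{x_0}{\lambda^2})$. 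Your forward version is arguably cleaner, since it makes the intermediate conclusion (only $0,\pm\tfrac{x_0}{\lambda}$ are critical in $(-\tfrac1\lambda,\tfrac1\lambda)$) explicit rather than leaving it implicit in the third case of a contradiction argument; the paper's descent is more compact but relies on the reader unpacking why the third case really does contradict minimality.
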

\begin{proof} By Cvitanovi\'c-Feigenbaum equation, $F'(z)=-F'(\lambda z)F'(F(\lambda z))$. One has:
\begin{align*}
F'(\tfrac{a}{\lambda})=-F'(a)F'(F(a))=-F'(a)F'(-\tfrac{x_0}{\lambda})=0,\\
F'(\tfrac{x_0}{\lambda^2})=-F'(\tfrac{x_0}{\lambda})F'(F(\tfrac{x_0}{\lambda}))=0.
\end{align*}
Assume that there is another critical point on $(0,\tfrac{x_0}{\lambda^2})$. Let $b$ be the minimal such  critical point. Then \[F'(b)=0,F'(\lambda b)\neq 0\;\text{therefore}\;F'(F(\lambda b))=0.\] Since $\lambda b\in (0,\tfrac{x_0}{\lambda})$, using Proposition \ref{PropX0} we get that $F(\lambda b)\in (-\tfrac{1}{\lambda},1)$. Then one of the following three possibilities holds:
\begin{itemize}
\item{}$F(\lambda b)=0\;\;\Rightarrow\;\;\lambda b=x_0,\;b=\tfrac{x_0}{\lambda}$;
\item{}$F(\lambda b)=-\tfrac{x_0}{\lambda}\;\;\Rightarrow\;\;\lambda b=a,\;b=\tfrac{a}{\lambda}$;
\item{}$F(\lambda b)\in (-\tfrac{1}{\lambda},-\tfrac{x_0}{\lambda})\;\;\Rightarrow\;\;\lambda b\in (a,\tfrac{x_0}{\lambda})$ and hence $b>\tfrac{a}{\lambda}>\tfrac{1}{\lambda}>-F(\lambda b)$.
\end{itemize}
Each of the possibilities above contradicts  the choice of $b$.
\end{proof}
Now, Theorem \ref{ThCrit} together with Lemma \ref{LmCritPoints} imply that for each of the segments
 \[[0,\tfrac{x_0}{\lambda}],[\tfrac{x_0}{\lambda},\tfrac{a}{\lambda}],[\tfrac{a}{\lambda},\tfrac{x_0}{\lambda^2}]\] there exists exactly one tile $P\in\cp$ in the first quadrant which contain this segment in its boundary. Denote these tiles by $P_{0,I},P_{1,I}$ and $P_{2,I}$ correspondingly. For a quadrant $J\neq I$ (that is, $J=II,III$ or $IV$) and $k\in\{0,1,2\}$ let $P_{k,J}\in P$ be the tile
 in quadrant $J$ which is symmetric to $P_{k,I}$ with respect to one of the axis or the origin. For any set $P$ and any $n$ set $P^{(n)}=\lambda^n P$.
 \begin{figure}\centering\includegraphics[width=0.85\textwidth]{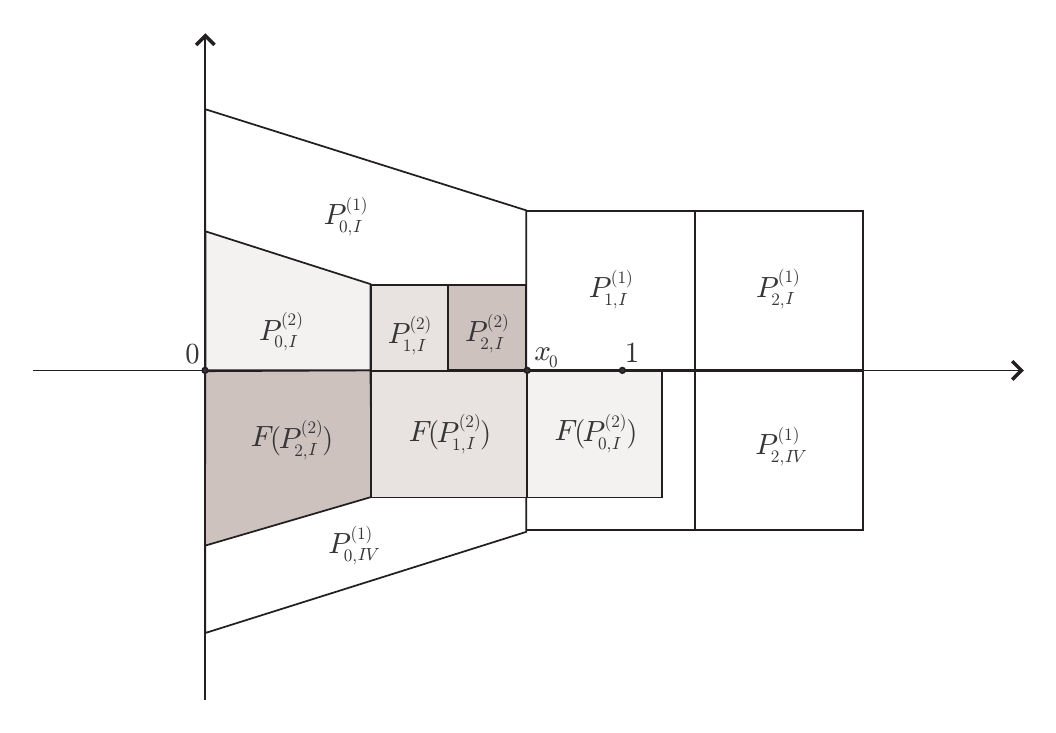}\caption{Illustration to Proposition \ref{PropTilingStr}.} \end{figure} 
 \begin{Prop}\label{PropTilingStr} The Feigenbaum map $F$ satisfies the following:
 \begin{itemize}
 \item[1)] $F(P_{0,I}^{(2)})\subset P_{1,IV}^{(1)}$;
 \item[2)] $P_{1,IV}^{(2)}\cup P_{2,IV}^{(2)}\subset F(P_{1,I}^{(2)})\subset P_{0,IV}^{(1)}$;
 \item[3)] $F(P_{2,I}^{(2)})\subset P_{0,IV}^{(1)}$;
 \item[4)] $F(P_{0,I}^{(1)})=P_{0,IV}^{(0)}$, $F(P_{1,I}^{(1)})=P_{0,III}^{(0)}$.
 \end{itemize}
 \end{Prop}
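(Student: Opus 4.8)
The plan is to deduce each inclusion from two pieces of data attached to every tile: its \emph{real boundary}, the interval of $\mr$ it abuts --- which for a tile of $\cp^{(j)}$ is always an interval between consecutive critical points of $F^{2^j}$, i.e.\ $\lambda^j$ times an interval with endpoints drawn from $0,\tfrac{x_0}{\lambda},\tfrac a\lambda,\tfrac{x_0}{\lambda^2},\dots$ --- together with the component of $\mc\sm\mr$ in which it sits (equivalently, near an endpoint, the quadrant). A tile is uniquely determined by this data, so in each case it suffices to compute the $F$-image of the relevant real boundary and then read off, from the local behaviour of $F$ at a convenient endpoint, on which side of $\mr$ the image lies. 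Throughout we use: the symmetries $F(-z)=F(z)$ and $\overline{F(z)}=F(\bar z)$, so that $\cp$ is invariant under $z\mapsto -z$ and under conjugation; Proposition \ref{PropX0} and Lemma \ref{LmCritPoints}, which say that $F$ restricts to a strictly decreasing homeomorphism of $[0,\tfrac{x_0}{\lambda}]$ onto $[-\tfrac1\lambda,1]$ with $F(\lambda x_0)=x_0$, $F(x_0)=0$, $F(1)=-\lambda$; and the functional equation $F^2(\lambda z)=-\lambda F(z)$, which yields for instance $F(\lambda a)=\lambda x_0$ (since $F^2(\lambda a)=-\lambda F(a)=x_0$ and $\lambda a\in(0,\tfrac{x_0}{\lambda})$, where the preimage of $x_0$ is $\lambda x_0$).

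First I would treat item 4. By Lemma \ref{LmPartition}(1) (with $n=1$, $m=0$), $F$ carries every tile of $\cp^{(1)}$ bijectively onto a tile of $\cp$, so $F(P_{0,I}^{(1)})$ and $F(P_{1,I}^{(1)})$ are honest tiles of $\cp$ and only their names are at issue. The point $x_0\in\partial P_{0,I}^{(1)}$ satisfies $F(x_0)=0$, so $0\in\partial F(P_{0,I}^{(1)})$; and since $F$ is decreasing on $[0,\tfrac{x_0}{\lambda}]$ we have $F'(x_0)<0$, so near $x_0$ the map $F$ rotates by $\pi$, carrying the part of $P_{0,I}^{(1)}$ lying above and to the left of $x_0$ into the region below and to the right of $0$, i.e.\ into $\mh_-$. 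The only tile of $\cp$ abutting $\mr$ at $0$ from the fourth quadrant is $P_{0,IV}$, hence $F(P_{0,I}^{(1)})=P_{0,IV}$. The same argument --- now $P_{1,I}^{(1)}$ lies above and to the right of $x_0$, so its image lies below and to the left of $0$ --- gives $F(P_{1,I}^{(1)})=P_{0,III}$; conjugating, $F(P_{0,IV}^{(1)})=P_{0,I}$ and $F(P_{1,IV}^{(1)})=P_{0,II}$.

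Next, the upper bounds in items 1--3. Lemma \ref{LmPartition}(2) (with $n=2$, $s=1$) already tells us that each of $F(P_{0,I}^{(2)})$, $F(P_{1,I}^{(2)})$, $F(P_{2,I}^{(2)})$ is contained in a single tile of $\cp^{(1)}$, and it remains to identify that tile. The real boundaries of $P_{0,I}^{(2)},P_{1,I}^{(2)},P_{2,I}^{(2)}$ are $[0,\lambda x_0]$, $[\lambda x_0,\lambda a]$, $[\lambda a,x_0]$, and $F$ (decreasing on $[0,\tfrac{x_0}{\lambda}]$, with the boundary values recorded above) maps them onto $[x_0,1]$, $[\lambda x_0,x_0]$, $[0,\lambda x_0]$ respectively. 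Inspecting $F$ near the endpoint sent to $F(0)=1$ (item 1) or to $F(x_0)=0$ (items 2, 3) shows, exactly as above, that in all three cases the image lies in $\mh_-$. Since $[x_0,1]$ is contained in the $\cp^{(1)}$-interval $[x_0,a]$, whereas $[\lambda x_0,x_0]$ and $[0,\lambda x_0]$ are contained in the $\cp^{(1)}$-interval $[0,x_0]$, the containing tiles are $P_{1,IV}^{(1)}$ (item 1) and $P_{0,IV}^{(1)}$ (items 2 and 3); these are the upper bounds claimed.

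The remaining --- and, I expect, genuinely laborious --- point is the lower bound in item 2, which requires that $F(P_{1,I}^{(2)})$ actually contains two of the three $\cp^{(2)}$-subtiles of $P_{0,IV}^{(1)}$, whereas Lemma \ref{LmPartition}(2) furnishes only one. Here I would first pin down the second iterate exactly: writing $P_{1,I}^{(2)}=\lambda\, P_{1,I}^{(1)}$ and applying $F^2(\lambda z)=-\lambda F(z)$ set-wise, one gets $F^2(P_{1,I}^{(2)})=-\lambda F(P_{1,I}^{(1)})=-\lambda P_{0,III}=\lambda P_{0,I}=P_{0,I}^{(1)}$, using item 4. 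On the other hand $F$ maps $P_{0,IV}^{(1)}$ bijectively onto the $\cp$-tile $P_{0,I}$ (the conjugate of $F(P_{0,I}^{(1)})=P_{0,IV}$), and $P_{0,I}^{(1)}\subset P_{0,I}$ (a tile of $\cp^{(1)}$ lies in a tile of $\cp$, and matching real boundaries identifies which), so --- using the upper bound $F(P_{1,I}^{(2)})\subset P_{0,IV}^{(1)}$ just established --- $F(P_{1,I}^{(2)})$ is precisely the preimage of $P_{0,I}^{(1)}$ under $F|_{P_{0,IV}^{(1)}}$. Now $P_{1,IV}^{(2)}$ and $P_{2,IV}^{(2)}$ also lie in $P_{0,IV}^{(1)}$, and one more round of the real-boundary/quadrant bookkeeping --- $F$ sends their real boundaries $[\lambda x_0,\lambda a]$ and $[\lambda a,x_0]$ onto $[\lambda x_0,x_0]$ and $[0,\lambda x_0]$, both inside $[0,x_0]$, and near a convenient endpoint the images are seen to lie in $\mh_+$, while Lemma \ref{LmPartition}(2) keeps each image inside a single tile of $\cp^{(1)}$ --- shows $F(P_{1,IV}^{(2)})\subset P_{0,I}^{(1)}$ and $F(P_{2,IV}^{(2)})\subset P_{0,I}^{(1)}$. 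Since $F$ is injective on $P_{0,IV}^{(1)}$, it follows that $P_{1,IV}^{(2)}\cup P_{2,IV}^{(2)}\subset F(P_{1,I}^{(2)})$. I expect the real obstacle to be precisely this bookkeeping --- correctly tracking the four symmetric copies of each tile through the repeated reflections and rescalings --- rather than any single estimate.
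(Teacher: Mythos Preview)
Your argument is correct and follows essentially the same route as the paper: identify each image tile by computing the $F$-image of its real boundary segment, check which half-plane the image lies in, and invoke Lemma \ref{LmPartition} to know the image sits inside (or equals) a single tile of the appropriate level. The paper's proof is considerably terser --- for item 1) it just notes $F(0)=1$ lies on the boundary of $P_{1,IV}^{(1)}$ and $F(P_{0,I}^{(2)})\subset F(P_{0,I})=\mh_-$, then cites Lemma \ref{LmPartition} 2); for item 2) it computes $F([\lambda x_0,\lambda a])=(\lambda x_0,x_0)$ and again cites Lemma \ref{LmPartition} 2); items 3) and 4) are declared ``similar''.

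Where you genuinely add content is the lower bound in item 2). The paper's invocation of Lemma \ref{LmPartition} 2) literally yields only \emph{one} tile $Q_0\in\cp^{(2)}$ inside $F(P_{1,I}^{(2)})$, and the passage from the real-boundary image $[\lambda x_0,x_0]$ to containment of \emph{both} $P_{1,IV}^{(2)}$ and $P_{2,IV}^{(2)}$ is left to the reader. Your route --- using the functional equation to get $F^2(P_{1,I}^{(2)})=P_{0,I}^{(1)}$ exactly, then pulling back through the bijection $F|_{P_{0,IV}^{(1)}}\colon P_{0,IV}^{(1)}\to P_{0,I}$ and observing (by conjugating the upper bounds already proved) that $F(P_{1,IV}^{(2)}),F(P_{2,IV}^{(2)})\subset P_{0,I}^{(1)}$ --- makes this step fully explicit. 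This is a cleaner justification than what the paper writes down, though presumably what the authors had in mind.
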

 \begin{proof} Since $$P(0)=1\in P_{1,IV}^{(1)}\text{ and }F(P_{0,I}^{(2)})\subset F(P_{0,I}^{(0)})=\mh_-,$$ by Lemma \ref{LmPartition} $2)$ we get that $F(P_{0,I}^{(2)})\subset P_{1,IV}^{(1)}$.

Further, we have:
\[F(\lambda x_0)=x_0,\;\;\text{and}\;\;F(F(\lambda a))=-\lambda F(a)=x_0.\] Since $F$ is one-to-one on $[0,\tfrac{x_0}{\lambda}]$, it follows that $F(\lambda a)=\lambda x_0$. Thus,
\[F((\lambda x_0,\lambda a))=(\lambda x_0,x_0).\] By Lemma \ref{LmPartition} $2)$ we obtain the property $2)$ of Proposition \ref{PropTilingStr}. The properties $3)$ and $4)$ can be proven in a similar fashion.
\end{proof}

\subsection{Computing the Feigenbaum map $F$.}
Let us set
 \[W=\text{Int}\overline{P_{0,I}\cup P_{0,II}\cup P_{0,III}\cup P_{0,IV}}.\] 
 Consistently with our previous notation, let us  define $W^{(0)}=W$ and $W^{(n)}=\lambda^nW$. 

Let us fix a rational number $r>0$ and 
 a dyadic set $U$ such that 
\begin{equation}\label{EqSetU}U_r(W^{(1)})\subset U\;\;\text{and}\;\;U_r(U)\subset W^{(0)}.
\end{equation}

We state:
\begin{Prop} \label{prop:feigpoly}
The restriction of $F$ onto $U$ is poly-time computable.
\end{Prop}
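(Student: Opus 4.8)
The plan is to exploit the functional equation \eqref{EqCvitFeig} to pull back the computation of $F$ on $U$ to a computation near the critical value, where $F$ is well understood. The key structural fact is that on a neighborhood of the real segment around $0$, the map $F$ is a composition of a univalent branch and the squaring map: writing $F(z)=H(z^2)$ with $H=\psi^{-1}$ a univalent map on $\mc_\lambda$ (the slit plane), it suffices to compute $H$ (equivalently its inverse $\psi$) to polynomial precision on the relevant compact set, together with the squaring map, which is trivially poly-time. So the problem reduces to: \emph{compute the univalent function $H$ (or $\psi=H^{-1}$) to within $2^{-n}$ on a fixed compact set, in time polynomial in $n$.}

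First I would set up the reduction carefully. The set $U$ is a fixed dyadic set with $\overline{U}\subset W^{(0)}$ by \eqref{EqSetU}, and $W^{(0)}$ is a bounded neighborhood of the real segment $[-\tfrac{x_0}{\lambda},\tfrac{x_0}{\lambda}]$ on which the "first-return-type" tiling structure of Proposition~\ref{PropTilingStr} holds; in particular $F$ maps this neighborhood into a bounded region near the critical value $1$ where the functional equation $F(z)=-\tfrac1\lambda F^2(\lambda z)$ can be iterated. The self-similarity \eqref{EqF2m}, $F^{2^m}(z)=(-\lambda)^m F(z/\lambda^m)$, lets one trade a factor $\lambda^{-m}$ of scale for $2^m$ iterates; but the real engine is Lanford's observation that the Cvitanovi\'c–Feigenbaum equation is a contraction on a suitable space of analytic functions. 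Concretely, one writes $F$ as a fixed point of the operator $T(\varphi)(z)=-\tfrac1\lambda\varphi(\varphi(\lambda z))$ (with the normalization $\varphi(0)=1$ fixing $\lambda$), shows $T$ is a contraction in the sup-norm on a well-chosen closed ball of analytic functions on a domain containing $\overline{U}$, and runs the iteration starting from a dyadic-rational polynomial approximation $\varphi_0$. After $k$ iterations the error contracts like $\theta^k$ for some fixed $\theta<1$, so $O(n)$ iterations give precision $2^{-n}$; each iteration is a composition and scalar multiplication of analytic functions represented by their values on a fine dyadic grid (or by truncated Taylor/Chebyshev data), and composition of functions known to precision $2^{-m}$ on a compact set, with uniformly bounded domains of definition and Koebe-type control of derivatives, costs only $\mathrm{poly}(m)$. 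Tracking how the precision needed at the "input" of the composition relates to the precision at the "output" — this is where one needs a uniform Lipschitz bound for $F$ on a slightly larger set, supplied by Cauchy estimates since $\overline{U}\Subset W^{(0)}\Subset \hat W$ — gives the polynomial bookkeeping.

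The main obstacle I expect is not the contraction itself but \emph{making the composition step genuinely poly-time and uniform in the point and the oracle}. Each application of $T$ composes an approximant with itself; to evaluate $\varphi(\varphi(\lambda z))$ to precision $2^{-m}$ one must (i) know $\lambda z$, hence $z$, to some precision $2^{-m'}$ with $m'=m+O(1)$ (fine, since $|\lambda|$ is an explicit computable constant), (ii) know $\varphi(\lambda z)$ to precision controlled by the Lipschitz constant of $\varphi$ on the outer domain, and (iii) ensure the intermediate value $\varphi(\lambda z)$ stays inside the domain where the \emph{next} copy of $\varphi$ is controlled — this is exactly what Proposition~\ref{PropTilingStr} and the inclusions \eqref{EqSetU} guarantee, since they show $F$ (and hence every close-enough approximant) maps $U$ well inside $W^{(0)}$, with room to spare quantified by the fixed rational $r$. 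One must also handle the square-root branch in $H^{-1}$ univalent on the slit domain $\mc_\lambda$: since $\overline{U}$ avoids a neighborhood of the slits, a fixed branch is selected and is analytic with uniformly bounded derivatives there, so it too is poly-time. Finally, because the number $\lambda$ (equivalently $c_{Feig}$) is itself only a computable constant, one should note that an approximation of $\lambda$ to precision $2^{-\mathrm{poly}(n)}$ is obtained in $\mathrm{poly}(n)$ time by running the same contraction (the fixed point of $T$ determines $\lambda$), so no external oracle for $F$ or $\lambda$ is needed — matching the remark in the introduction that the Main Theorem can be stated oracle-free. Assembling these pieces: $O(n)$ iterations of $T$, each a $\mathrm{poly}(n)$-cost analytic composition on a fixed compact set, followed by one evaluation at the given point $z\in U$, yields $F(z)$ to precision $2^{-n}$ in time $\mathrm{poly}(n)$, uniformly in $z$ and its oracle. $\hfill\square$
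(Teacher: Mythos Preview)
Your proposal contains a genuine gap at its core: the operator $T(\varphi)(z)=-\tfrac{1}{\lambda}\varphi(\varphi(\lambda z))$ is \emph{not} a contraction on any reasonable function space. This is precisely the period-doubling renormalization operator $\cR_2$, and it is well known (and stated explicitly in the paper) to be hyperbolic at $F$ with a one-dimensional unstable direction --- the expanding eigenvalue is the second Feigenbaum constant $\delta\approx 4.669$. So your iteration $\varphi_{k+1}=T(\varphi_k)$ will not converge to $F$ from a generic initial polynomial $\varphi_0$; errors in the unstable direction grow geometrically rather than shrink. Your sentence ``Lanford's observation that the Cvitanovi\'c--Feigenbaum equation is a contraction'' mis-states what Lanford actually did.

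What the paper does instead is Lanford's fix for exactly this obstacle: replace the fixed-point problem for $\cR_2$ by the approximate Newton operator
\[
\Phi(g)=g+(I-\Gamma)^{-1}\bigl(\cR_2(g)-g\bigr),
\]
where $\Gamma$ is an explicit finite-rank rational approximation to $D\cR_2|_F$. This $\Phi$ \emph{is} a contraction on a Banach ball $\cd\subset\cl^1_\rho$ (the weighted $\ell^1$ space of Maclaurin coefficients), and moreover maps $\cd$ into $\cl^1_{\rho'}$ for some $\rho'>\rho$, which gives the analyticity improvement needed to truncate Taylor series with controlled error. The paper then iterates $\Phi$ on truncated, rounded polynomials: $O(m)$ iterations at $O(m^4)$ cost each, for total time $O(m^5)$, yielding a $2^{-m}$ approximation to $F$ in $\|\cdot\|_\rho$; the Cvitanovi\'c--Feigenbaum equation then extends this to the full set $U$. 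Your bookkeeping about composition, Lipschitz control via Cauchy estimates, and the inclusions \eqref{EqSetU} is fine in spirit, and your remark that $\lambda$ comes for free from the same fixed-point iteration is correct --- but all of it only works once you have an operator that actually contracts, and that requires the Newton-type modification, not the raw renormalization $T$.
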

The proof of Proposition \ref{prop:feigpoly} will occupy the rest of the section. 

Let us begin by defining some functional spaces.
For a topological disk $W\subset\mc$  we will denote 
$\ca_W$ the Banach space of bounded analytic functions in $W$ equipped with the
sup norm. 
In the case when the domain $W$ is the disk $\md_\rho$ of radius $\rho>0$ centered
at the origin, we will denote $\ca_{\md_\rho}\equiv \ca_\rho$.

For each $\rho>0$ we will also consider the 
collection $\cl^1_\rho$ of analytic functions $f(z)$ defined on $\md_\rho$, equipped with the weighted $l_1$
norm on the coefficients of the Maclaurin's series:

 \begin{equation}
 \|f\|_\rho=\sum^{\infty}_{n=0}{\left| f^{(n)}(0) \right| \over n !} \rho^n.
 \end{equation}

\noindent
The proof of the following elementary statement is left to the reader:

\begin{Lm}\label{Equivalence}
$ \ $  
\begin{itemize}
\item[1)]  Let $f \in \cl^1_\rho$, then 
$\sup_{\md_\rho} |f(z)| \le  \|f\|_\rho$;
\item[2)]  Let $f \in \ca_{\rho'}$ and $\rho'>\rho$, then $ \|f\|_{\rho} \le {\rho \over \rho'-\rho}  \sup_{\md_{\rho'}} |f(z)|.$
\end{itemize}
\end{Lm}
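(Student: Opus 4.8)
\textbf{Proof proposal for Lemma \ref{Equivalence}.}

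The plan is to prove the two inequalities by direct manipulation of the Maclaurin coefficients. Write $f(z)=\sum_{n=0}^\infty a_n z^n$ on $\md_\rho$, so that $a_n=f^{(n)}(0)/n!$ and $\|f\|_\rho=\sum_{n=0}^\infty |a_n|\rho^n$.

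For part 1), I would simply apply the triangle inequality to the power series: for any $z$ with $|z|\le\rho$,
\[
|f(z)|=\Bigl|\sum_{n=0}^\infty a_n z^n\Bigr|\le\sum_{n=0}^\infty |a_n|\,|z|^n\le\sum_{n=0}^\infty |a_n|\rho^n=\|f\|_\rho,
\]
taking the supremum over $\md_\rho$ on the left. (Implicitly, if $\|f\|_\rho<\infty$ the series converges absolutely and uniformly on $\overline{\md_\rho}$, so there is nothing delicate here; if $\|f\|_\rho=\infty$ the inequality is vacuous.)

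For part 2), the key tool is the Cauchy estimate. Since $f\in\ca_{\rho'}$ is bounded and analytic on $\md_{\rho'}$, for every $r<\rho'$ Cauchy's integral formula on the circle $|z|=r$ gives $|a_n|\le r^{-n}\sup_{\md_{r}}|f|\le r^{-n}\sup_{\md_{\rho'}}|f|$; letting $r\uparrow\rho'$ yields $|a_n|\le (\rho')^{-n}M$, where $M:=\sup_{\md_{\rho'}}|f|$. Summing the geometric series, and using $\rho<\rho'$,
\[
\|f\|_\rho=\sum_{n=0}^\infty |a_n|\rho^n\le M\sum_{n=0}^\infty\Bigl(\frac{\rho}{\rho'}\Bigr)^n=\frac{M}{1-\rho/\rho'}=\frac{\rho'}{\rho'-\rho}\,M.
\]
This is slightly weaker than the claimed bound $\tfrac{\rho}{\rho'-\rho}M$. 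To recover the stated constant one sharpens the $n=0$ term: $|a_0|=|f(0)|\le M$ contributes $M$, not $\tfrac{\rho'}{\rho'-\rho}M$, while for $n\ge1$ one keeps $|a_n|\rho^n\le M(\rho/\rho')^n$ and sums from $n=1$, giving $\|f\|_\rho\le M\bigl(1+\tfrac{\rho/\rho'}{1-\rho/\rho'}\bigr)=M\cdot\tfrac{1}{1-\rho/\rho'}=\tfrac{\rho'}{\rho'-\rho}M$ again — so the honest bound one gets this way is $\tfrac{\rho'}{\rho'-\rho}M$. I would therefore either (i) note that $\tfrac{\rho'}{\rho'-\rho}\le \tfrac{\rho}{\rho'-\rho}$ fails, so the paper's constant presumably comes from a normalization in which $f(0)=0$ (true for the relevant applications, where $F(0)$ is split off), in which case summing from $n=1$ gives exactly $\sum_{n\ge1}M(\rho/\rho')^n=\tfrac{\rho}{\rho'-\rho}M$; or (ii) observe the constant is used only up to a harmless factor and the weaker $\tfrac{\rho'}{\rho'-\rho}M$ suffices throughout. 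The main (minor) obstacle is thus purely bookkeeping: pinning down whether the intended hypothesis includes $f(0)=0$ or whether $\tfrac{\rho'}{\rho'-\rho}$ should replace $\tfrac{\rho}{\rho'-\rho}$; the analytic content is entirely the elementary Cauchy estimate plus a geometric series.
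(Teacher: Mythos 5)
The paper gives no proof of this lemma — it is stated as ``left to the reader'' — so there is no argument of the authors' to compare against; I can only assess correctness. Your part 1) is the standard (and essentially unique) argument: triangle inequality on the Maclaurin series, and it is correct. For part 2), your diagnosis is also correct, and you have in fact caught a small error in the paper's statement. The Cauchy estimate $|a_n|\le(\rho')^{-n}\sup_{\md_{\rho'}}|f|$ summed against $\rho^n$ gives the constant $\rho'/(\rho'-\rho)$, not $\rho/(\rho'-\rho)$, and the latter is genuinely false in general: take $f\equiv 1$, so $\|f\|_\rho=1=\sup_{\md_{\rho'}}|f|$, and choose $\rho'>2\rho$ so that $\rho/(\rho'-\rho)<1$. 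As you observe, the stated constant would be exact under the extra hypothesis $f(0)=0$ (sum from $n=1$), and in any case the distinction is immaterial for the paper's use of the lemma, which only needs the two norms to be comparable up to a constant depending on $\rho,\rho'$. So: part 1) is fine as written, and for part 2) you should simply replace the constant by $\rho'/(\rho'-\rho)$ (or add the hypothesis $f(0)=0$); the analytic content of your argument is complete and correct.
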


As an immediate consequence, we have:

\begin{Co}
$\cl_\rho^1$ is a Banach space.
\end{Co}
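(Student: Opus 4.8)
The final statement to prove is the Corollary: $\cl_\rho^1$ is a Banach space. Since $\cl_\rho^1$ has already been defined as the space of analytic functions on $\md_\rho$ equipped with the weighted $l_1$ norm $\|f\|_\rho = \sum_{n=0}^\infty \frac{|f^{(n)}(0)|}{n!}\rho^n$, and it is clear that $\|\cdot\|_\rho$ is a norm (homogeneity and the triangle inequality follow termwise from the corresponding properties of the $l_1$ norm on coefficient sequences, and $\|f\|_\rho = 0$ forces every Maclaurin coefficient to vanish, hence $f \equiv 0$), the only substantive point is completeness. The natural strategy is to exploit the isometric identification of $\cl_\rho^1$ with the weighted sequence space $\ell^1_\rho = \{(a_n)_{n\ge 0} : \sum_n |a_n|\rho^n < \infty\}$ via $f \mapsto (a_n)$ where $a_n = f^{(n)}(0)/n!$, and to transfer completeness of $\ell^1_\rho$ back to $\cl_\rho^1$.

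First I would observe that $\ell^1_\rho$ is complete: it is just the standard $\ell^1$ space after the bijective rescaling $(a_n) \mapsto (a_n \rho^n)$, which is an isometry from $\ell^1_\rho$ onto $\ell^1$, and $\ell^1$ is a classical Banach space. So given a Cauchy sequence $(f_k)$ in $\cl_\rho^1$, its coefficient sequences $(a_n^{(k)})_n$ form a Cauchy sequence in $\ell^1_\rho$, which therefore converges in $\ell^1_\rho$ to some $(a_n)_n$. Set $f(z) = \sum_{n=0}^\infty a_n z^n$. The remaining task is to check two things: that $f$ genuinely defines an element of $\cl_\rho^1$, i.e. that the series has radius of convergence at least $\rho$ and $f$ is analytic on $\md_\rho$; and that $\|f - f_k\|_\rho \to 0$.

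For the first point, finiteness of $\sum_n |a_n|\rho^n$ immediately gives $\limsup |a_n|^{1/n} \le 1/\rho$, so the power series converges on $\md_\rho$ and defines an analytic function there; moreover $\sum_n |a_n|\rho^n < \infty$ means precisely $f \in \cl_\rho^1$ with $\|f\|_\rho = \sum_n |a_n|\rho^n$. For the second point, $\|f - f_k\|_\rho = \sum_n |a_n - a_n^{(k)}|\rho^n$, which is exactly the $\ell^1_\rho$-distance between $(a_n)$ and $(a_n^{(k)})$, and this tends to $0$ by construction. Hence every Cauchy sequence in $\cl_\rho^1$ converges in $\cl_\rho^1$, proving completeness.

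There is no real obstacle here: the entire content is the (routine) identification of $\cl_\rho^1$ with a weighted $\ell^1$ space and the invocation of completeness of $\ell^1$. If one prefers a self-contained argument avoiding the reduction to $\ell^1$, the same proof works directly: for a Cauchy sequence $(f_k)$, each coordinate functional $f \mapsto a_n = f^{(n)}(0)/n!$ is bounded (indeed $|a_n|\rho^n \le \|f\|_\rho$), so the coefficients converge; one defines $f$ by the limiting coefficients, uses the uniform smallness of the tails $\sum_{n > N} |a_n^{(k)}|\rho^n$ (which follows from the Cauchy property, exactly as in the standard $\ell^1$ completeness proof) to conclude both that $f \in \cl_\rho^1$ and that $f_k \to f$ in $\|\cdot\|_\rho$. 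The analyticity of the limit is automatic from convergence of the majorized power series on $\md_\rho$. The minor care needed is to note, via part 1) of Lemma~\ref{Equivalence}, that $\|\cdot\|_\rho$-convergence implies uniform convergence on $\md_\rho$, so the limit is analytic — though as noted this is already guaranteed directly by the coefficient bound $\limsup|a_n|^{1/n}\le 1/\rho$.
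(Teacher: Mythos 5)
Your proof is correct. The paper does not actually spell out an argument: it simply asserts that the Corollary is an ``immediate consequence'' of the preceding Lemma (the one comparing $\|\cdot\|_\rho$ with sup norms). Your route — identifying $\cl^1_\rho$ isometrically with a weighted $\ell^1$ space of Maclaurin coefficients and transferring completeness of $\ell^1$ — is the standard and fully rigorous way to do it, and in fact does a bit more than what the Lemma by itself delivers. Part 1) of the Lemma only tells you that a $\|\cdot\|_\rho$-Cauchy sequence converges uniformly on $\md_\rho$, hence has an analytic limit; it does not, on its own, show that the limit lies in $\cl^1_\rho$ or that convergence takes place in the $\|\cdot\|_\rho$ norm (part 2) requires a strictly larger radius $\rho'>\rho$ and so cannot be applied to the limit directly). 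Your coefficientwise argument closes exactly that gap, and your closing remark correctly situates the Lemma as supplying the analyticity of the limit rather than the completeness per se. In short: same conclusion, but your proof is more self-contained and does not rely on the Lemma in the way the paper's one-line justification suggests.
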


\noindent
To compute the Feigenbaum map $F$ we will recall a rigorous computer-assisted
 approach of Lanford \cite{Lan}, based on an approximate Newton's method
for $\cR_2$. Note that Lanford also proved hyperbolicity of period-doubling renormalization at $F$ (but not uniqueness of $F$)
using the same approach before Sullivan's work which we have quoted above.

Lanford used the Contraction Mapping Principle to find $F$. Since $\cR_2$ is not a contraction -- as it has an unstable eigenvalue
at $F$ -- he replaced the fixed point problem for $\cR_2$ with the fixed point problem for the approximate Newton's Method
$$g\mapsto g+(I-\Gamma)^{-1}(\cR_2(g)-g),$$
where $\Gamma$ is a high-precision finite approximation of $D\cR_2|_F.$
Formally, his results can be summarized as follows:

\begin{Th}\cite{Lan}
There exist rational numbers $\rho>0$ and $\Delta>0$, a polynomial $p(z)$ with
 rational coefficients, and an explicit linear operator $\Gamma$ on $\cl^1_\rho$ (which is given by a finite rational matrix in the
canonical basis of $\cl^1_\rho$)  such that the following properties hold.

The operator $I-\Gamma$ is invertible.
Denoting $\cd$ the Banach ball in $\cl^1_\rho$ given by
$$||g-p||_\rho<\Delta,$$
and 
$$\Phi:g\mapsto g+(I-\Gamma)^{-1}(\cR_2(g)-g),$$
we have:
\begin{itemize}
\item $\Phi(\cd)\Subset\cd$;
\item moreover, there exists $\rho'>\rho$ such that for every $g\in\cd$ the image $\Phi(g)\in \cl^1_{\rho'}$;
\item the Feigenbaum map $F\in \cd$ (note that it immediately follows that $\Phi(F)=F$);
\item finally, there exists a positive $\epsilon <1$ such that
$$||\Phi(g)-F||_\rho<\epsilon||g-F||_\rho $$
for all $g\in\cd$.
\end{itemize}

\end{Th}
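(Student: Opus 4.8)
The plan is to follow Lanford's rigorous computer-assisted scheme: locate a good numerical approximation $p$ to $F$, control the derivative $D\cR_2$ near it by a finite rational matrix $\Gamma$, and show that the associated Newton-type operator $\Phi$ is a genuine contraction on a small ball whose fixed point is forced to be $F$ by the uniqueness in Theorem \ref{ThConvRenorm}.

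First I would work entirely in the Banach space $\cl^1_\rho$ for a suitable rational $\rho$ slightly larger than the radius on which $F$ is naturally defined near $0$, exploiting that in this space the sup-norm, composition, and extraction of Taylor coefficients are all controlled by explicit, rigorously computable bounds (Lemma \ref{Equivalence}). The operator $\cR_2 g=\alpha^{-1}\circ g^2\circ\alpha$, $\alpha(z)=g(0)z$, is analytic as a map between such spaces on a neighborhood of $F$; one must first verify it is well-defined on a ball around a numerical approximant, i.e. that any $g$ with $\|g-p\|_\rho$ small still satisfies the domain/univalence hypotheses built into the Cvitanovi\'c--Feigenbaum equation, so that $g\circ g\circ\alpha$ makes sense on $\md_\rho$. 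A key point is that this composition in fact extends analytically to a strictly larger disk $\md_{\rho'}$: since $g(\md_\rho)$ lands well inside the domain of analyticity of $g$, a definite amount of radius is gained, which yields the second bullet and, through the compact inclusion $\cl^1_{\rho'}\hookrightarrow\cl^1_\rho$, the compact containment $\Phi(\cd)\Subset\cd$.

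Next I would carry out the rigorous numerics. Compute a polynomial $p$ with rational coefficients nearly solving $\cR_2(p)=p$, and a finite rational matrix $\Gamma$ approximating $D\cR_2|_p$ (equivalently $D\cR_2|_F$) on a finite-dimensional coordinate subspace, the complementary tail being handled by a priori decay estimates. Using interval/ball arithmetic one then verifies: (i) $I-\Gamma$ is invertible, with an explicit bound $M$ on $\|(I-\Gamma)^{-1}\|$; (ii) the residual $\eta:=\|\Phi(p)-p\|_\rho=\|(I-\Gamma)^{-1}(\cR_2(p)-p)\|_\rho$ is small; (iii) on $\cd=\{\|g-p\|_\rho<\Delta\}$ the operator norm of $D\Phi|_g=(I-\Gamma)^{-1}(D\cR_2|_g-\Gamma)$ is bounded by $\epsilon_0:=M\cdot\sup_{g\in\cd}\|D\cR_2|_g-\Gamma\|<1$, where $\|D\cR_2|_g-\Gamma\|$ is controlled by $\|D\cR_2|_p-\Gamma\|$ (numeric plus truncation error) together with $\Delta\cdot\sup_{\cd}\|D^2\cR_2\|$. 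Choosing $\Delta$ so that $\eta+\epsilon_0\Delta<\Delta$ gives $\Phi(\cd)\subset\cd$, and with the radius gain above $\Phi(\cd)\Subset\cd$; moreover $\Phi$ is a contraction with factor $\epsilon_0$ on the closed ball $\overline{\cd}$.

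Finally, by the Contraction Mapping Principle $\Phi$ has a unique fixed point $g_*\in\cd$; since $I-\Gamma$ is invertible, $\Phi(g_*)=g_*$ forces $\cR_2(g_*)=g_*$, and Theorem \ref{ThConvRenorm} identifies $g_*=F$, so $F\in\cd$. The last bullet, $\|\Phi(g)-F\|_\rho<\epsilon\|g-F\|_\rho$ for all $g\in\cd$ with $\epsilon:=\epsilon_0<1$, follows from the mean value inequality along the segment from $g$ to $F$ in the convex set $\cd$ and the bound on $\|D\Phi\|$ there. The main obstacle is step (iii) together with the well-definedness and analyticity-improvement claims: making the action of $\cR_2$ and $D\cR_2$ on the infinite-dimensional space $\cl^1_\rho$ rigorous requires careful bookkeeping of composition estimates, tail/truncation bounds, and a verification that the entire ball $\cd$ remains inside the domain where period-doubling renormalization is defined --- precisely the delicate core of Lanford's original argument.
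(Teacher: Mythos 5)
The paper does not reprove this theorem; it is quoted with a citation to Lanford \cite{Lan}, preceded only by a brief heuristic description of the approximate-Newton scheme and the contraction mapping idea. Your sketch is a faithful high-level reconstruction of precisely that computer-assisted scheme (ball arithmetic to bound $\|D\Phi\|<1$ on $\cd$, radius gain $\rho'>\rho$ for compact containment, contraction mapping principle, and identification of the fixed point with $F$ via the uniqueness in Theorem \ref{ThConvRenorm}), so it is essentially the same approach as the one the paper invokes.
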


Note that by Cvitanovi\'c-Feigenbaum equation (\ref{EqCvitFeig}) to prove Proposition \ref{prop:feigpoly}, it is
sufficient to compute $F$ in polynomial time in a disk $\cd_r$ for some $r>0$. Let $\rho$ and $\rho'$ be as above.
Fix $m$, where $2^{-m}$ is the desired precision for $F$. 
For a decimal number $b$ denote $\lfloor b\rfloor_l$ its round-off to the $l$-th decimal digit.
For 
$$f=\sum_{k=1}^\infty b_kz^k\text{ set }\lfloor f\rfloor_l\equiv\sum_{k=1}^\infty \lfloor b_k\rfloor_l z^k.$$
Fix $l=O(m)$ such that for all $g\in\cd$ 
$$||g-\lfloor g\rfloor_l||_\rho<2^{-(m+2)}. $$
Further, for 
$$f=\sum_{k=1}^\infty b_kz^k\text{ set }\text{Poly}_n(f)\equiv\sum_{k=1}^n b_k z^k.$$
Applying Cauchy derivative estimate to the remainder term in Maclaurin series, we see that 
there exists $n=O(m)$ such that for all $g\in\cd\cap\cl^1_{\rho'}$
$$||g-\text{Poly}_n( g)||_\rho<2^{-(m+2)}. $$
Now let $$p_0=\lfloor p_0\rfloor_l\sum_{n=0}^n a_k^0 z^k\in\cd$$ be a polynomial with rational coefficients. 
The binomial formula implies that 
$\Phi(p_0)$ can be computed in time $O(m^4)$. Define 
$$p_1=\text{Poly}_n(\lfloor \Phi(p_0)\rfloor_l).$$
Note that
$$||p_1-F||_\rho<\epsilon||p_0-F||_\rho+2^{-(m+1)}.$$
Iterating the procedure $O(m)$ times (total computing time $O(m^5)$), we obtain a $2^{-(m+1)}$-approximation of  $F$ in $||\cdot||_\rho$.
This, and Lemma \ref{Equivalence} imply the desired statement.
\section{Computing long iterations.}\label{SecCompIter}
  Introduce the following notations:
  \[Q_J=\text{Int}\overline{P_{0,J}\cup P_{1,J}},\;\;R_J=\text{Int}\overline{P_{1,J}\cup P_{2,J}},\;\;S_J=\text{Int}\overline{P_{0,J}\cup P_{1,J}\cup P_{2,J}},\]
  where $J\in\{I,II,II,IV\}$ and $\text{Int}$ stands for the interior of a set. Let
 \[P_k=\text{Int}\overline{P_{k,I}\cup P_{k,II}\cup P_{k,III}\cup P_{k,IV}}.\] Similarly define $Q,R$ and $S$. 
 Note that $W^{(0)}=P_{0}$. Recall that for a set $P$ we defined $P^{(n)}=\lambda^nP$.

Observe that $J_f\Subset Q^{(1)}\Subset W^{(0)}=W.$ For any $w\in W$ let $m(w)$ be such that $w\in W^{(m)}\setminus W^{(m+1)}$.
Fix a point $z_0=z\in Q^{(1)}$. Introduce inductively a sequence $\{z_k\}$ of iterates of $z$ under $F$ as follows:
\begin{equation}\label{EqZkDef}
z_{k+1}=\left\{\begin{array}{ll}F(z_k),&\text{if}\;\;z_k\in  Q^{(1)}\setminus W^{(1)},\\
F^{2^{m_k-1}}(z_k),&\text{if}\;\;m_k=m(z_k)\ge 1.\end{array}\right.
\end{equation} 
If $z_k\notin  Q^{(1)}$ then the sequence terminates at the index $k_{term}:=k$. For every $k$ let $m_k$ be the number
 such that $z_k\in W^{(m_k)}\setminus W^{(m_k+1)}$.
By \eqref{EqF2m} if $m_k\ge 1$ then one has:
\[z_{k+1}=(-\lambda)^{(m_k-1)}F(z_k/\lambda^{m_k-1}).\] In particular, this implies that 
\begin{equation}\label{EqMkgrowth}m_{k+1}\ge m_k-1\;\;\text{for all}\;\;k.
\end{equation}

Define inductively a sequence of indexes $s_k$ such that $z_k=f^{s_k}(z)$:
\begin{equation}\label{EqSk}s_0=0,\;\;s_{k+1}=\left\{\begin{array}{ll}s_k+1,&\text{if}\;\;m_k=0,\\
s_k+2^{m_k-1},&\text{if}\;\;m_k\ge 1.\end{array}\right.
\end{equation}
 Set
 \[\epsilon=\lambda d\big(W\setminus S^{(1)},\mr\big).\] If there exists 
$i$ such that $z_i\in W^{(1)}$ then denote $j_{term}=\max\Big\{i:z_i\in W^{(1)}\Big\}$. Otherwise set $j_{term}=\infty$.

The main result of this section is the following:
\begin{Prop}\label{PropPolyEsc} There exist constants $A,B>0$ such that if $d(z,J_f)\ge 2^{-n}$ then the sequence $\{z_k\}$ terminates at some index $k=k_{term}\le An+B$.
\end{Prop}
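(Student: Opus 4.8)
The plan is to track how the ``depth'' $m_k$ evolves along the orbit $\{z_k\}$ and to show that an orbit that does not stay trapped near $J_f$ must, in a uniformly bounded number of steps, produce a point outside $Q^{(1)}$. The key quantitative input is the hyperbolic expansion of the renormalization rescaling: each time we apply the branch $z_{k+1}=F^{2^{m_k-1}}(z_k)=(-\lambda)^{m_k-1}F(z_k/\lambda^{m_k-1})$ with $m_k\ge 1$, we are blowing up a neighborhood of the critical Cantor set by a definite linear factor, and the point either escapes a fixed scale or else it lands at a strictly shallower depth in a controlled way. Concretely, I would first record the elementary inequalities already noted in the excerpt: $m_{k+1}\ge m_k-1$ for all $k$ (from \eqref{EqMkgrowth}), and the combinatorial fact from Proposition \ref{PropTilingStr} and Lemma \ref{LmPartition} that $F(S^{(n)})$ (or $F(Q^{(n)})$) is contained in a tile of level $n-1$, so that each ``deep'' step genuinely decreases the level by one unless the point has already left the region where the dynamics is rescaled. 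This gives the skeleton: the sequence of depths $m_k$ behaves like a random walk that can go up by at most some bounded amount at the non-rescaled steps and goes down by exactly one at the rescaled steps.

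Next I would make the escape quantitative. Set $\epsilon=\lambda\, d(W\setminus S^{(1)},\mr)$ as in the text; the point of this constant is that a point of $W^{(m_k)}\setminus W^{(m_k+1)}$ which is not in $S^{(m_k)}$ lies at distance $\ge \epsilon\lambda^{-m_k+1}$ from $\mr$ after rescaling by $\lambda^{-(m_k-1)}$ — i.e., it is at a fixed scale $\ge\epsilon$ from the real axis, hence (by Theorem \ref{ThCrit} and the tiling structure) its image under one application of $F$ lands in a definite tile, and after a bounded further number of steps it escapes $Q^{(1)}$ entirely. So the orbit can only survive a long time if it repeatedly returns to the ``thin'' part $S^{(m_k)}$ near the real axis, and each such return still forces the depth to drop. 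I would then argue: the hypothesis $d(z,J_f)\ge 2^{-n}$ means $z_0=z$ lies at distance $\ge 2^{-n}$ from the Cantor set $J_f$; since $J_f\subset W^{(m)}$ for every $m$ with $\bigcap W^{(m)}$ shrinking geometrically around $J_f$ at rate $|\lambda|$, the starting depth satisfies $m_0\le C_1 n+C_2$ for explicit constants (because otherwise $z$ would be within $\mathrm{diam}(W^{(m_0)})\sim|\lambda|^{m_0}$ of $J_f$, contradicting $d(z,J_f)\ge2^{-n}$). Therefore the ``budget'' of depth available to the walk is linear in $n$.

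The combinatorial bookkeeping then closes the argument: between consecutive rescaled steps there are at most a bounded number $N_0$ of non-rescaled steps (a non-rescaled step has $m_k=0$ or $z_k\in Q^{(1)}\setminus W^{(1)}$, and by Proposition \ref{PropTilingStr} the orbit of such a point leaves $Q^{(1)}$ within $N_0$ iterates of $F$ unless it re-enters $W^{(1)}$ and resumes the rescaled regime at some bounded depth). Each rescaled step decreases $m_k$ by one, and $m_k$ cannot increase without first passing through a non-rescaled block that also has bounded length; so after at most $O(m_0)=O(n)$ steps the depth has been exhausted, the orbit is forced out of $Q^{(1)}$, and the sequence terminates at $k_{term}\le An+B$. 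I expect the main obstacle to be the second paragraph — precisely controlling the ``escape'' once a point sits at a fixed scale $\epsilon$ from $\mr$, making sure the constant number of extra $F$-iterations needed to leave $Q^{(1)}$ is genuinely uniform in the point and independent of the depth $m_k$; this requires a careful compactness argument using Theorem \ref{ThCrit} (every point with non-real image has a neighborhood mapped univalently onto a half-plane) together with the exact self-similarity $F^{2^m}(z)=(-\lambda)^mF(z/\lambda^m)$ to transport the estimate across all scales.
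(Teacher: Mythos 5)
There are two genuine gaps in your depth-budget argument, and together they undercut the whole strategy.

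\textbf{First gap: a ``deep'' step need not decrease the depth.} You assert that ``each rescaled step decreases $m_k$ by one,'' but the inequality \eqref{EqMkgrowth} is only a \emph{lower} bound: $m_{k+1}\ge m_k-1$. The computation $z_{k+1}=(-\lambda)^{m_k-1}F(z_k/\lambda^{m_k-1})$ puts $z_{k+1}$ in $W^{(m_k-1)}$, but says nothing about how deep inside $W^{(m_k-1)}$ it lands. If $z_k/\lambda^{m_k-1}$ sits close to the Cantor piece of $J_F$ inside $W^{(1)}\setminus W^{(2)}$, then $F(z_k/\lambda^{m_k-1})$ can land arbitrarily close to $J_F$, hence in $W^{(m')}$ for $m'$ much larger than $m_k-1$. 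So the depth can jump \emph{up}, and there is no a priori monotonicity to exploit; a linear budget $m_0\le C_1 n + C_2$ does not control the walk.

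\textbf{Second gap: non-rescaled blocks are not of bounded length.} You claim that an orbit point in $Q^{(1)}\setminus W^{(1)}$ must leave $Q^{(1)}$ or re-enter $W^{(1)}$ within $N_0$ iterates, for a universal $N_0$. That is false. By Proposition~\ref{PropTilingStr}(4), $F(P_{1,J}^{(1)})=P_{0,J'}^{(0)}$, and the large level-$0$ tile $P_{0,J'}^{(0)}$ contains parts of both $W^{(1)}$ and $P_1^{(1)}$; a point starting very near $J_F\cap P_1^{(1)}$ can remain in $Q^{(1)}\setminus W^{(1)}$ for a time that grows with $n$. Indeed, the paper treats this case separately (case $j_{term}=\infty$ in its proof of Proposition~\ref{PropPolyEsc}), bounding the length of such a block by $O(n)$ \emph{via hyperbolic expansion} (Proposition~\ref{PropEuclExpP1}), not by a uniform constant.

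Because neither the depth decrease nor the block-length bound holds, the combinatorial bookkeeping in your last paragraph does not close. What the paper actually does is of a different character: it introduces the auxiliary rescaled orbit $w_k=z_k/\lambda^{r_k}$, establishes Euclidean expansion $|DH_{k,k+l}(w_k)|\ge C\lambda^l$ uniformly along blocks in $R^{(2)}$ and along blocks in $P_1^{(1)}$ (Propositions~\ref{PropEuclExpV} and~\ref{PropEuclExpP1}), counts the number $N$ of ``thick'' steps where $w_k$ escapes $S^{(2)}$ (Proposition~\ref{PropHypExp}), and finally uses Koebe distortion (Lemma~\ref{LmUsingKoebe}) to convert $d(z,J_F)\ge 2^{-n}$ into the two-sided derivative bound $|DF^{s_{k_{term}}}(z)|\le M 2^n$. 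Comparing with the exponential lower bound on the derivative then forces $k_{term}=O(n)$. The driving mechanism is therefore hyperbolic expansion along the whole orbit and Koebe control of distortion, not a level-count. Your first inequality $m_0\le C_1 n + C_2$ is correct and is used (implicitly) by the paper, but by itself it cannot replace the expansion argument.
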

\noindent
The rest of Section \ref{SecCompIter} is devoted to the proof of Proposition \ref{PropPolyEsc}.
But first let us state an important
\begin{Co}\label{CoPolyEsc} If $d(z,J_f)\ge 2^{-n}$ then $m(F^s(z))\le An+B$ for all $s\le s_{k_{term}}$.
\end{Co}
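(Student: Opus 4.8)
The plan is to deduce Corollary \ref{CoPolyEsc} directly from Proposition \ref{PropPolyEsc} together with the elementary monotonicity property \eqref{EqMkgrowth}, without re-examining the dynamics. Proposition \ref{PropPolyEsc} gives us a bound $k_{term}\le An+B$ on the number of \emph{steps} of the accelerated sequence $\{z_k\}$; what we must instead control is $m(F^s(z))$ over the whole range of \emph{genuine} iterates $s\le s_{k_{term}}$, i.e.\ over all the intermediate iterates that are skipped when we apply $F^{2^{m_k-1}}$ in one accelerated step. So the first step is to observe that every intermediate iterate $F^s(z)$ with $s_k\le s\le s_{k+1}$ is, by \eqref{EqF2m}, of the form $(-\lambda)^{m_k-1}F^{s-s_k}(z_k/\lambda^{m_k-1})$ when $m_k\ge 1$, and equals $F^s(z)$ trivially when $m_k=0$; in either case it is a true iterate of a point lying between $z_k$ and $z_{k+1}$ in the renormalization hierarchy.

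Second, I would bound $m_k$ itself along the accelerated orbit. Since the sequence terminates at $k_{term}\le An+B$, and since by \eqref{EqMkgrowth} one has $m_{k+1}\ge m_k-1$, the depths $m_k$ cannot grow without control \emph{downward}; but we also need an \emph{upper} bound. Here one uses part 2) of Lemma \ref{LmPartition} (together with Proposition \ref{PropTilingStr}), which says that applying $F$, or an accelerated step, to a tile in $\cp^{(n)}$ lands inside a tile of $\cp^{(n-1)}$ — hence at each accelerated step the depth drops by at most one, giving $m_{k+1}\le m_k+1$, and so $m_k\le m_0+k\le m_0+An+B$. Since $z_0=z\in Q^{(1)}$ we have $m_0\le 1$ (or at worst a fixed constant), so $m_k\le An+B'$ for a slightly enlarged constant. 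This is precisely the depth bound along the skeleton of the orbit.

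Third, I would promote this from the accelerated iterates $z_k$ to \emph{all} iterates $F^s(z)$ with $s\le s_{k_{term}}$. Take any such $s$ and let $k$ be the index with $s_k\le s< s_{k+1}$. If $m_k=0$ then $s=s_k$ and $F^s(z)=z_k$, so $m(F^s(z))=m_k\le An+B'$. If $m_k\ge 1$, then by Lemma \ref{LmPartition} part 2) every forward iterate of $z_k$ up to $z_{k+1}=F^{2^{m_k-1}}(z_k)$ stays inside $W^{(m_k-1)}$ — it never escapes to a shallower level than $m_k-1$ before reaching $z_{k+1}$, because $F^s$ applied to the tile containing $z_k$ (of depth $m_k$) is contained in a tile $Q_1\in\cp^{(m_k-1)}$. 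Hence $m(F^s(z))\le$ the maximum of $m_k$ and $m_{k+1}$ up to the additive constant built into the difference between $W^{(m)}$ and the tiling levels, i.e.\ $m(F^s(z))\le An+B''$ for a final constant $B''$. Renaming $B''$ as $B$ (at the cost of enlarging $A$ or $B$ in the statement, which is harmless since the corollary only asserts existence of such constants) completes the argument.

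The main obstacle I anticipate is the third step: passing from the sparse set of accelerated iterates to the dense set of all iterates requires knowing that the skipped iterates do not transiently plunge to a much deeper renormalization level than their endpoints. This is exactly what Lemma \ref{LmPartition} part 2) is for — it pins the whole forward orbit segment $\{F^s(z_k)\}_{0\le s\le 2^{m_k-1}}$ between a tile $Q_0\in\cp^{(m_k)}$ and a tile $Q_1\in\cp^{(m_k-1)}$ — so the work is really just in carefully translating ``lies in a tile of level between $m_k-1$ and $m_k$'' into ``$m(F^s(z))$ differs from $m_k$ by a bounded amount,'' accounting for the mismatch between the combinatorial levels $\cp^{(n)}$ and the nested domains $W^{(n)}$. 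Once that translation is in hand, the corollary follows immediately from the linear bound on $k_{term}$.
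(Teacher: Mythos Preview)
Your proposal has a genuine gap in the second step, where you bound $m_k$ along the accelerated orbit. You claim that Lemma~\ref{LmPartition}~2) gives $m_{k+1}\le m_k+1$, so that $m_k\le m_0+k\le An+B'$. This inequality is false: $m_{k+1}$ can exceed $m_k$ by an arbitrary amount. Indeed, if $z_k/\lambda^{m_k-1}$ lies very close to $\pm x_0$ (which sits on the boundary of $W^{(1)}\setminus W^{(2)}$), then $F(z_k/\lambda^{m_k-1})$ is very close to $0$, so $z_{k+1}\in W^{(M)}$ with $M$ as large as you like. Lemma~\ref{LmPartition}~2) only says that $F^s$ carries a tile of level $n$ into a tile of level $n-1$; but a tile of $\cp^{(n-1)}$ is \emph{larger} and can contain points of $W^{(M)}$ for any $M$, so no upper bound on $m_{k+1}-m_k$ follows. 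The paper instead uses precisely the inequality you already quoted, \eqref{EqMkgrowth}, but runs it \emph{backwards from the terminal point}: since $z_{k_{term}}\notin Q^{(1)}\supset W^{(1)}$ one has $m_{k_{term}}=0$, and then $m_j\le m_{j+1}+1\le\cdots\le m_{k_{term}}+(k_{term}-j)\le k_{term}\le An+B$.

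Your third step has the same defect: containment of $F^s(z)$ in a tile $Q_1\in\cp^{(m_k-1)}$ does not bound $m(F^s(z))$ from above, because $Q_1$ may well meet $W^{(M)}$ for large $M$. The paper handles the intermediate iterates via the first-return structure of the renormalization tower: for $z_j\in W^{(m_j)}$ the first return to $W^{(m_j-1)}$ under $F$ occurs exactly at time $2^{m_j-1}=s_{j+1}-s_j$, so for $s_j<s<s_{j+1}$ the iterate $F^s(z)$ lies outside $W^{(m_j-1)}$, giving $m(F^s(z))\le m_j-1\le An+B$ directly.
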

\begin{proof} Proposition \ref{PropPolyEsc} and \eqref{EqMkgrowth} imply that $m_j\le An+B$ for all $j$. Let 
$s_j<s<s_{j+1}$. Then $$m_j>1\;\;\text{and}\;\;s_{j+1}=s_j+2^{m_j-1}.$$ We have: $F^{s_j}(z)=z_j\in W^{(m_j)}.$ Since the first landing map
 from $W^{(m_j)}$ to $W^{(m_j-1)}$ is $F^{2^{m_j-1}}=F^{s_{j+1}-s_j}$ we get $$m(F^s(z))\le m_j-1,$$ which finishes the proof.
\end{proof}

\begin{Lm}\label{LmZkFin} Let $z\in W^{(0)}\setminus J_F$, then $\{z_k\}$ is finite. Moreover, if $j_{term}<\infty$  then $z_{j_{term}}\notin S^{(2)}$ and hence $|\im(z_{j_{term}})|>\epsilon$.
\end{Lm}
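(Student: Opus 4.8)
Here is how I would approach the proof.

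The plan is to split into cases according to whether the orbit of $z$ ever visits $Q^{(1)}$. If $z\notin Q^{(1)}$, then $z_0=z\notin Q^{(1)}$, so the sequence consists of the single point $z_0$ (i.e.\ $k_{term}=0$); since $z_0\notin Q^{(1)}\supseteq W^{(1)}$, no $z_i$ lies in $W^{(1)}$, so $j_{term}=\infty$ and the second assertion is vacuous. Hence we may assume $z\in Q^{(1)}\setminus J_F$. Here I will use the standard fact that $\mathrm{int}\,K(F)=\emptyset$ for the Feigenbaum map, so that $J_F=K(F)$ and thus $z\notin K(F)$. Since $W^{(0)}$ is a bounded neighborhood of $K(F)$ whose closure lies in the domain of $F$, the forward orbit of $z$ cannot remain in $W^{(0)}$ for all time; let $N\ge 1$ be the least index with $F^N(z)\notin W^{(0)}$.

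The key technical ingredient is the following control on the excursions of $F$ into the renormalization tower around the origin; establishing it is, I expect, the heaviest part of the argument. \emph{Claim:} for every $m\ge1$ and every $w\in W^{(m)}$ one has $F^{\,j}(w)\in W^{(0)}$ for all $0\le j\le 2^{\,m-1}$, and moreover $F^{\,2^{\,m-1}}(w)\in W^{(m-1)}$. The base case $m=1$ is the inclusion $F(W^{(1)})\subseteq W^{(0)}$, which follows from Proposition \ref{PropTilingStr}$(4)$, the symmetries of $F$ (it is even and has real coefficients), and the nesting relations among the tiles $P_{k,J}^{(n)}$ recorded in Section 2. For the inductive step one rescales by $\lambda$ via $F^2(\zeta)=-\lambda F(\zeta/\lambda)$, which iterates to $F^{\,2j'}(\zeta)=-\lambda F^{\,j'}(\zeta/\lambda)$ (using evenness): if $w\in W^{(m)}$ then $w/\lambda\in W^{(m-1)}$, so for even $j=2j'\le 2^{\,m-1}$ the inductive hypothesis gives $F^{\,j}(w)=-\lambda F^{\,j'}(w/\lambda)\in -\lambda W^{(0)}=W^{(1)}\subseteq W^{(0)}$, for odd $j=2j'+1$ it gives $F^{\,j}(w)=F(-\lambda F^{\,j'}(w/\lambda))\in F(W^{(1)})\subseteq W^{(0)}$, and the ``moreover'' at level $m$ reduces to the one at level $m-1$ after the same rescaling. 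Here Lemma \ref{LmPartition} and Proposition \ref{PropTilingStr} supply the tile inclusions, and the symmetry $-W^{(0)}=W^{(0)}$ is the invariance of $\cp$ under $z\mapsto-z$.

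Granting the Claim, finiteness of $\{z_k\}$ follows. Suppose the sequence were infinite; then $z_k\in Q^{(1)}\subseteq W^{(0)}$ for all $k$, and since each step increases $s_k$ by at least $1$ we have $s_k\to\infty$, so there is a least $k\ge1$ with $s_k\ge N$; thus $s_{k-1}<N\le s_k$. If the step $z_{k-1}\mapsto z_k$ is a single application of $F$ (the case $m_{k-1}=0$), then $s_k=s_{k-1}+1$, which forces $s_k=N$ and hence $z_k=F^N(z)\notin W^{(0)}$, contradicting $z_k\in Q^{(1)}$. Otherwise $m_{k-1}\ge1$, so $z_{k-1}\in W^{(m_{k-1})}$ and $s_k-s_{k-1}=2^{\,m_{k-1}-1}$; the Claim gives $F^{\,s_{k-1}+\ell}(z)\in W^{(0)}$ for all $0\le\ell\le 2^{\,m_{k-1}-1}$, in particular $F^N(z)\in W^{(0)}$, again a contradiction. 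Hence $\{z_k\}$ is finite.

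Finally, suppose $j_{term}<\infty$ and write $j:=j_{term}$. Then $z_j\in W^{(1)}$, $j<k_{term}$ (as $z_j\in W^{(1)}\subseteq Q^{(1)}$ does not trigger termination), and $z_i\notin W^{(1)}$ for every $i>j$. Also $z_j\ne 0$ (otherwise $z\in F^{-s_j}(0)\subseteq J_F$ by complete invariance), so $m:=m(z_j)\ge1$ is well defined. If $m\ge2$, the Claim gives $z_{j+1}=F^{\,2^{\,m-1}}(z_j)\in W^{(m-1)}\subseteq W^{(1)}$, contradicting $z_{j+1}\notin W^{(1)}$; hence $m=1$, so $z_j\in W^{(1)}\setminus W^{(2)}$ and $z_{j+1}=F(z_j)$. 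It remains to prove $z_j\notin S^{(2)}$. By Proposition \ref{PropTilingStr}$(2)$--$(3)$ and the symmetries of $F$, $F$ maps each open tile $P_{1,J}^{(2)}$ and $P_{2,J}^{(2)}$ into $W^{(1)}$; since $z_j\notin W^{(2)}$ it lies in no open $P_{0,J}^{(2)}$, and if it lay in an open $P_{1,J}^{(2)}$ or $P_{2,J}^{(2)}$ then $z_{j+1}=F(z_j)\in W^{(1)}$, contradicting the maximality of $j$. The only remaining possibility for $z_j\in S^{(2)}$ is that $z_j$ lies on a common boundary arc of two of the level-$2$ tiles $P_{k,J}^{(2)}$ with $k\le 2$; such an arc is contained in $(F^{4})^{-1}(\mathbb{R})$, and an explicit local analysis of $F$ near the relevant critical points (via Proposition \ref{PropX0} and Lemma \ref{LmCritPoints}) shows that the $F$-orbit of such a point --- which the accelerated sequence follows, through points of $Q^{(1)}$, until it re-enters $W^{(1)}$ --- returns to $W^{(1)}$ within a bounded number of steps, once more contradicting the maximality of $j$. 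This boundary analysis is the most delicate point in the last step. Therefore $z_j\notin S^{(2)}$, and then $z_j\in W^{(1)}\setminus S^{(2)}=\lambda\big(W\setminus S^{(1)}\big)$ yields $|\im(z_j)|\ge\lambda\, d\big(W\setminus S^{(1)},\mathbb{R}\big)=\epsilon$, which is the desired conclusion.
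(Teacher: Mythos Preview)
Your overall strategy is sound and not far from the paper's, but two steps are underjustified, and for the second assertion the paper organizes the argument differently in a way that avoids the case you flag as ``most delicate.''

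\textbf{Finiteness.} Your inductive Claim is correct and is essentially a repackaging of Lemma~\ref{LmPartition} via the functional equation; the paper simply quotes Lemma~\ref{LmPartition} instead. However, your reason for ``the orbit must leave $W^{(0)}$'' is not valid as stated: that $W^{(0)}$ is a bounded neighborhood of $K(F)$ with closure in the domain of $F$ does not by itself force escape. What makes it true here is the specific structural fact $F^{-1}(W^{(0)})\cap W^{(0)}=Q^{(1)}$ (equivalently $F(W^{(0)}\setminus Q^{(1)})\cap W^{(0)}=\varnothing$), which follows from Proposition~\ref{PropTilingStr}(4) and the symmetries; with this, escape from $Q^{(1)}$ (guaranteed by the quadratic-like structure) gives escape from $W^{(0)}$ one step later. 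The paper bypasses this entirely: it works with the first $l$ such that $F^l(z)\notin Q^{(1)}$, takes the largest $k$ with $s_k<l$, and in the case $m_k\ge 2$ uses Lemma~\ref{LmPartition} to see that $F^{l-s_k}(P_{0,J}^{(m_k)})$ lies in a level-$(m_k-1)$ tile meeting $J_F$, hence in $Q^{(1)}$, a contradiction.

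\textbf{The assertion $z_{j_{term}}\notin S^{(2)}$.} Here the paper argues \emph{backward} from $k_{term}$ rather than forward from $j_{term}$. From $z_{k_{term}}\notin Q^{(1)}$ and Proposition~\ref{PropTilingStr} one gets $z_{k_{term}-1}\notin S^{(2)}$; if $z_{k_{term}-1}\in W^{(1)}$ then $j_{term}=k_{term}-1$ and one is done. Otherwise $j:=j_{term}\le k_{term}-2$, so $z_{j+1}\in Q^{(1)}\setminus W^{(1)}$ and $z_{j+2}\notin W^{(1)}$, and then the inclusions $F^2(W^{(2)})\subset W^{(1)}$, $F(P_1^{(2)})\subset W^{(1)}$, $F(P_2^{(2)})\subset W^{(1)}$ rule out $z_j\in S^{(2)}$. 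Your forward argument (first forcing $m_j=1$, then checking tiles) is fine for the open tiles, but the ``boundary-arc'' case you isolate is left as an unproved ``explicit local analysis,'' and the mechanism you propose (the accelerated orbit ``returns to $W^{(1)}$'') is not the right one. The clean way to dispose of it is to observe that any such interior boundary arc of level-$2$ tiles lands, after finitely many applications of $F$, on the segment of $\mathbb R\cup i\mathbb R$ inside $W^{(0)}$, and this segment lies in $J_F$; since $z\notin J_F$, no $z_j$ can lie on such an arc. Either insert this observation, or adopt the paper's backward-from-$k_{term}$ organization, which packages the case analysis more efficiently.
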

\begin{proof} Let $z\in  Q^{(1)}\setminus J_F$. Assume that $\{z_k\}$ is infinite. Then $z_k\in  Q^{(1)}$ for every $k$. Since $F_{| Q^{(1)}}$ is quadratic-like, there exists $l$ such that $F^l(z)\notin  Q^{(1)}$. Let $k$ be the maximal index such that 
$s_k<l$. Two cases possible:
\\$a)$ $m_k=0$ or $m_k=1$. Then $z_{k+1}=F(z_k)$ and $s_{k+1}=s_k+1$. It follows that 
$l=s_{k+1}=s_k+1$ and $F^l(z)=z_{k+1}\in  Q^{(1)}$. We arrive at a contradiction.\\
$b)$ $m_k\ge 2$. Since $z_k\notin J_f\supset \mr\cup \i\mr$ we obtain that $z_k\in P_{0,J}^{(m_k)}$ 
for some $J$. 
Observe that $s_{k+1}=s_k+2^{m_k-1}\ge l$. Lemma \ref{LmPartition} implies that 
\[F^{l-s_k}( P_{0,J}^{(m_k)})\subset T,\;\;\text{where}\;\;T\in \cp^{(m_k-1)}.\] Clearly, $T\cap J_F\neq\varnothing$.
Thus, $F^l(z)$ belongs to a tile $T$ of level $m_k-1\ge 1$ which intersects $J_F$. This implies that 
$F^l(z)\in Q^{(1)}$. 
We arrive at a contradiction. This shows that the sequence $z_j$ is finite.

Further, assume that $j=j_{term}<\infty$. Set $k=k_{term}$. Observe that $z_k\notin  Q^{(1)}$. It follows from Proposition \ref{PropTilingStr} that $z_{k-1}
\notin S^{(2)}$. Thus, if $z_{k-1}\in W^{(1)}$ then $j=k-1$ and 
\[|\im(z_j)|\ge d(W^{(1)}\setminus S^{(2)},\mr)=\epsilon.\] 
Otherwise, $k\ge j+2,z_{j+1}\in Q^{(1)}\setminus W^{(1)}$ and 
$z_{j+2}\notin W^{(1)}$. Since 
\[F^2(W^{(2)})\subset W^{(1)},\;\;F\big(P_1^{(2)}\big)\subset W^{(1)}\;\;
\text{and}\;\;F\big(P_2^{(2)}\big)\subset W^{(1)}\] we obtain that  $z_j\notin 
S^{(2)}$. It follows that 
$|\im z_j|\ge \epsilon$.
\end{proof}

\subsection{Expansion in the hyperbolic metrics.}
Fix $z_0\in  Q^{(1)}\setminus J_f$. Let $\{z_k\}$ be the sequence defined above. Set $r_k=\max\{m_k-1,0\}$, so that $z_{k+1}=F^{2^{r_k}}(z_k)$ for all $k<k_{term}$. Introduce an auxiliary sequence $w_k=z_k/\lambda^{r_k}$. From \eqref{EqF2m} we obtain that 
\[w_{k+1}=(-1)^{r_k}\lambda^{r_k-r_{k+1}}F(w_k).\] Observe that $w_k\subset Q^{(1)}\setminus W^{(2)}$. Therefore,
 $F(w_k)\subset W$. It follows that for all $k<k_{term}$ we have:
$r_{k+1}\ge r_k-1$. For convenience, set \[H_k(w)=
\lambda^{r_k-r_{k+1}}F(w),\;\;H_{k,l}=H_{l-1}\circ H_{l-2}\circ\ldots\circ H_{k+1}\circ H_k,\;\;k<l\] so that $w_{k+1}=\pm H_k(w_k),\;\;w_l=\pm H_{k,l}(w_k),\;k<l$.
 For a point $z$ such that $F(z)\notin \mr$ define 
 by $\|DF(z)\|_{\mh}$ the norm of the differential of $z$ in the hyperbolic metrics on either $\mh_+=\{z:\im z>0\}$ or $\mh_-=\{z:\im z<0\}$. Since $F$ is even and one-to-one from $P_{0,I}^{(0)}$ onto $\mh_+$, from Schwarz-Pick Theorem we obtain the following:
 \begin{Lm}\label{LmHypExp} For all $w\in W\setminus (\mr\cup\i\mr)$ one has $\|D F(w)\|_\mh>1$. Moreover, there exists $\lambda_1>1$ such that $\|DF(w)\|_\mh >\lambda_1$, assuming that $w\in W^{(1)}\setminus  S^{(2)}$.
 \end{Lm}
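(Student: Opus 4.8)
The plan is to reduce both statements to conformal invariance and strict monotonicity of the hyperbolic metric. The starting point is an explicit formula for $\|DF(w)\|_\mh$. If $w\in W\setminus(\mr\cup\i\mr)$, then $w$ belongs to exactly one of the four tiles $P_{0,I}^{(0)},\dots,P_{0,IV}^{(0)}$, say $w\in P$: indeed these four tiles, together with the arcs of $\mr\cup\i\mr$ through $0$ along which they abut, cover $W$ — near $0$ this is forced by $0$ being a simple critical point of $F$ with real quadratic part, so that $F^{-1}(\mr)$ agrees near $0$ with the two coordinate axes and $F^{-1}(\mc\setminus\mr)$ with the four quadrants. By Theorem~\ref{ThCrit} (which gives this for one tile) and the symmetries $z\mapsto-z$, $z\mapsto\bar z$ of $F$, the restriction $F|_P$ is a biholomorphism onto $\mh_+$ or $\mh_-$. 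Writing $\rho_\Omega$ for the density of the hyperbolic metric of $\Omega$, conformal invariance of the hyperbolic metric under $F|_P$ gives $|F'(w)|\,\rho_{\mh_\pm}(F(w))=\rho_P(w)$, so that, with the hyperbolic metric of $W$ on the source and that of $\mh_\pm$ on the target,
\[
\|DF(w)\|_\mh=\frac{\rho_P(w)}{\rho_W(w)}.
\]

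The first claim is then immediate: $P$ is a bounded topological disk properly contained in $W$ (the remaining three tiles, being disjoint from $P$, show $P\neq W$), so the inclusion $P\hookrightarrow W$ is holomorphic and not a covering map; by the strict form of the Schwarz--Pick inequality $\rho_P(w)>\rho_W(w)$ for every $w\in P$, whence $\|DF(w)\|_\mh>1$ on all of $W\setminus(\mr\cup\i\mr)$.

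For the uniform estimate I would extend $w\mapsto\rho_P(w)/\rho_W(w)$ to a function $g$ on $W$ by setting $g\equiv+\infty$ on $W\cap(\mr\cup\i\mr)$. Then $g$ is lower semicontinuous on $W$ and everywhere $>1$: it is continuous and $>1$ on $W\setminus(\mr\cup\i\mr)$ by the previous paragraph, and if $w\to x\in W\cap(\mr\cup\i\mr)$ then $\rho_P(w)\to\infty$ (as $x\in\partial P$ and $P$ is simply connected) while $\rho_W(w)$ stays bounded (as $x$ is interior to $W$), so $g(w)\to\infty$. Since a lower semicontinuous function which is everywhere $>1$ attains a minimum $>1$ on any compact subset of $W$, the ``moreover'' part reduces to two combinatorial facts about the Feigenbaum tiling:
\begin{itemize}
\item[(i)] $W^{(1)}\setminus S^{(2)}$ is disjoint from $\mr\cup\i\mr$, so that $\|DF(w)\|_\mh$ is defined and equals $g(w)$ at each of its points;
\item[(ii)] $\overline{W^{(1)}\setminus S^{(2)}}$ is a compact subset of $W$, i.e. it stays a definite distance from $\partial W$.
\end{itemize}
Granting (i) and (ii), $g$ attains a minimum $\mu>1$ on the compact set $\overline{W^{(1)}\setminus S^{(2)}}\subset W$, and any $\lambda_1\in(1,\mu)$ works.

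Both (i) and (ii) are where the renormalization self-similarity and the nesting of the tiling recorded in Lemma~\ref{LmPartition} and Proposition~\ref{PropTilingStr} enter (via $F^{2^m}(z)=(-\lambda)^mF(z/\lambda^m)$): $W^{(1)}=\lambda W$ is a rescaled copy of $W$, and one must see precisely along which arcs the tiles of consecutive levels meet and where $W^{(1)}$ is positioned inside $W$, in particular that $S^{(2)}$ is large enough to absorb the intersection of $W^{(1)}$ with the coordinate axes and to keep what remains away from $\partial W$. I expect this geometric bookkeeping to be the only real obstacle; everything else is conformal invariance of the hyperbolic metric, the strict Schwarz--Pick inequality, and a compactness argument.
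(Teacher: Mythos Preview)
Your strategy—pull back the half-plane metric via the conformal isomorphism $F|_P:P\to\mh_\pm$ and invoke the strict Schwarz--Pick inequality for the inclusion of the source—is exactly the argument the paper intends. There is, however, a slip in your key identity. In the paper, $\|DF(w)\|_\mh$ is defined using the hyperbolic metric of the relevant half-plane on \emph{both} source and target, not the metric of $W$ on the source; the correct formula is
\[
\|DF(w)\|_\mh\;=\;\frac{\rho_P(w)}{\rho_{\mh_\pm}(w)},
\]
where $\mh_\pm$ is the half-plane containing $w$. Your ratio $\rho_P/\rho_W$ is a different quantity: since $W$ straddles the real axis, neither of $\rho_W$ and $\rho_{\mh_\pm}$ dominates the other on $W\setminus(\mr\cup\i\mr)$, so the inequality you prove does not imply the stated one.

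With the corrected denominator the first assertion still follows as you say (now from $P\subsetneq\mh_\pm$, which is immediate since $P$ is bounded). For the uniform bound the compactness argument must be rerouted: the dangerous boundary is $\mr$, not $\partial W$. Approaching $\i\mr$ or the curved part of $\partial P$ away from the real axis is harmless, since there $\rho_P\to\infty$ while $\rho_{\mh_\pm}$ stays finite and the ratio goes to $+\infty$; but approaching a real boundary point both densities blow up at comparable rates and the ratio can fall to $1$. So your (ii) is not the relevant hypothesis, and your (i) as stated is in fact false: on the imaginary axis $S\cap\i\mr=P_0\cap\i\mr=W\cap\i\mr$ (the tiles $P_1,P_2$ abut the real axis away from the origin and do not meet $\i\mr$), hence $S^{(2)}\cap\i\mr=\lambda^2(W\cap\i\mr)\subsetneq\lambda(W\cap\i\mr)=W^{(1)}\cap\i\mr$, so $W^{(1)}\setminus S^{(2)}$ meets $\i\mr$. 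What you actually need—and what your real-axis computation $W^{(1)}\cap\mr=(-x_0,x_0)=S^{(2)}\cap\mr$ already points toward—is that the closure of $W^{(1)}\setminus S^{(2)}$ is disjoint from $\mr$; once that is established, your lower-semicontinuity and compactness argument, applied to $\rho_P/\rho_{\mh_\pm}$, goes through unchanged.
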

 Let $N=N(z)$ be the number of indexes $k$ for which $m_k\ge 1$ and $z_k\notin S^{(m_k+1)}$. Since the hyperbolic metric on $\mh_\pm$ is scaling invariant, using \eqref{EqF2m} we get:
 \begin{Prop}\label{PropHypExp} If $m_k\ge 1$ and $z_k\notin S^{(m_k+1)}\cup \i\mr$, then 
 \[\big\|DF^{2^{m_k-1}}(z_k)\big\|_\mh>\lambda_1.\] Moreover, there is a universal constant $C_1$ (independent from $z$) such that
 \[\big\|D F^{s_{j_{term}}}(z)\big\|_\mh>\lambda_1^{N-1},\;\;\big|DF^{s_{j_{term}}}(z)\big|>C_1\lambda_1^N,\] assuming 
 that $j_{term}<\infty$.
\end{Prop}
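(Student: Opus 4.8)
The plan is to use the exact self-similarity \eqref{EqF2m} to collapse each ``step'' $z_k\mapsto z_{k+1}$ of the accelerated orbit to a single application of $F$, and then to chain the one-step bounds of Lemma \ref{LmHypExp}. The elementary fact that makes this work cleanly is that for a holomorphic map $f$ between plane domains the quantity $\|Df(\zeta)\|_\mh$, being a ratio of conformal densities times $|f'(\zeta)|$, is \emph{multiplicative} under composition, so passing to iterates costs nothing.

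I would first prove the one-step estimate. Put $r_k=\max(m_k-1,0)$ and $w_k=z_k/\lambda^{r_k}$, so that by \eqref{EqF2m} the step $z_k\mapsto z_{k+1}$ equals $z\mapsto(-\lambda)^{r_k}F(z/\lambda^{r_k})$, i.e. $F$ pre- and post-composed with the real linear maps $z\mapsto cz$, $c\in\mr\setminus\{0\}$. Such maps preserve $\mh_+\cup\mh_-$ together with its (scaling-invariant) hyperbolic metric, of density $1/|\im\zeta|$, hence act isometrically; therefore $\|DF^{2^{r_k}}(z_k)\|_\mh=\|DF(w_k)\|_\mh$. When $m_k\ge1$ we have $r_k=m_k-1$, and dilation by $\lambda^{-(m_k-1)}$ carries $W^{(m_k)}\setminus W^{(m_k+1)}$ onto $W^{(1)}\setminus W^{(2)}$ and $S^{(m_k+1)}$ onto $S^{(2)}$, while fixing the coordinate axes setwise. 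Hence $z_k\notin S^{(m_k+1)}\cup\i\mr$, together with $z_{k+1}\notin\mr$ (forced whenever the hyperbolic norm is defined, since $F(\mr)\subset\mr$ gives $w_k\notin\mr$), yields $w_k\in W^{(1)}\setminus(S^{(2)}\cup\mr\cup\i\mr)$, and Lemma \ref{LmHypExp} gives $\|DF(w_k)\|_\mh>\lambda_1$. This proves the first assertion.

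For the ``moreover'' part, assume $j_{term}<\infty$. I would begin by observing that none of $z_0,\dots,z_{j_{term}}$ lies on $\mr\cup\i\mr$: otherwise, since $F(\mr\cup\i\mr)\subset\mr$, every subsequent $z_i$ would be real, contradicting $|\im z_{j_{term}}|>\epsilon$ from Lemma \ref{LmZkFin}. Thus the orbit up to index $j_{term}$ stays in $\mh_+\cup\mh_-$, the hyperbolic norm is defined throughout, and multiplicativity gives $\|DF^{s_{j_{term}}}(z)\|_\mh=\prod_{k=0}^{j_{term}-1}\|DF^{2^{r_k}}(z_k)\|_\mh$. Each factor exceeds $1$ by the first part of Lemma \ref{LmHypExp} (again via $\|DF^{2^{r_k}}(z_k)\|_\mh=\|DF(w_k)\|_\mh$, with $w_k\in W\setminus(\mr\cup\i\mr)$), and each factor with $m_k\ge1$, $z_k\notin S^{(m_k+1)}$ exceeds $\lambda_1$ by the assertion just proved. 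Since $z_i\notin W^{(1)}$, hence $m_i=0$, for $j_{term}<i\le k_{term}$, the only index counted by $N$ that can be absent from $\{0,\dots,j_{term}-1\}$ is $k=j_{term}$; so the product contains at least $N-1$ factors larger than $\lambda_1$ and none smaller than $1$, whence $\|DF^{s_{j_{term}}}(z)\|_\mh>\lambda_1^{N-1}$ (the degenerate case $s_{j_{term}}=0$ being immediate).

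Finally, for the Euclidean bound I would convert back: with $\rho(\zeta)=1/|\im\zeta|$ the hyperbolic density, $|DF^{s_{j_{term}}}(z)|=\|DF^{s_{j_{term}}}(z)\|_\mh\cdot\rho(z)/\rho(z_{j_{term}})$. For $z=z_0\in Q^{(1)}$ one has $|\im z|\le\diam Q^{(1)}$, so $\rho(z)\ge1/\diam Q^{(1)}$; and $z_{j_{term}}\notin S^{(2)}$ (Lemma \ref{LmZkFin}) gives $|\im z_{j_{term}}|>\epsilon$, so $\rho(z_{j_{term}})<1/\epsilon$. Together with the previous bound this yields $|DF^{s_{j_{term}}}(z)|>C_1\lambda_1^{N}$ with the universal constant $C_1=\epsilon/(\lambda_1\diam Q^{(1)})$. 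The part demanding real care is the index bookkeeping in the previous paragraph -- showing that every ``good'' step but at most one survives into the composition realizing $F^{s_{j_{term}}}$ -- together with the verification that conjugation by dilations genuinely renders the one-step estimate scale-invariant; the rest is an assembly of Lemmas \ref{LmHypExp} and \ref{LmZkFin}.
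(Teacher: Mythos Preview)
Your proof is correct and follows essentially the same route as the paper's: reduce the one-step estimate to Lemma \ref{LmHypExp} via the scale-invariance of the hyperbolic metric on $\mh_\pm$, chain by multiplicativity, and convert to a Euclidean bound using Lemma \ref{LmZkFin}. Your treatment is simply more explicit than the paper's (which is very terse on the index bookkeeping for the $N-1$ exponent and on the hyperbolic-to-Euclidean conversion), and your computation of $C_1=\epsilon/(\lambda_1\diam Q^{(1)})$ makes concrete what the paper leaves as ``equivalence of metrics on compacta''.
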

\begin{proof} If $z_k\notin S^{(m_k+1)}$ then $z_k/\lambda^{m_k-1}\notin S^{(2)}$. By Lemma \ref{LmHypExp} we obtain:
\[\big\|DF^{2^{m_k-1}}(z_k)\big\|_\mh=\big\|DF(z_k/\lambda^{m_k-1})\big\|_\mh>\lambda_1.\] It follows that 
$\big\|DF^{s_{j_{term}}}(z)\big\|_\mh>\lambda_1^{N-1}$. Since the hyperbolic metric of $\mh_\pm$ is equivalent to the Euclidean metric on any compact subset of $\mh_\pm$, using Lemma \ref{LmZkFin} we obtain the last inequality of Proposition \ref{PropHypExp}.
\end{proof}

Set \begin{align*}R_+=\inter\overline{R_I\cup R_{IV}}=R\cap \{z:\re z>0\},
\\ W_+=\inter\overline{P_{0,I}\cup P_{0,IV}}=
W\cap \{z:\re z>0\},\;\;F_+=F_{|W_+}.\end{align*}
Observe that \[F(W_+)=\mc\setminus((-\infty,-\tfrac{1}{\lambda}]\cup[1,+\infty))\Supset W_+^{(1)}\supset R^{(2)}_+.\] 

\begin{figure}\begin{center}\includegraphics[width=0.5\textwidth]{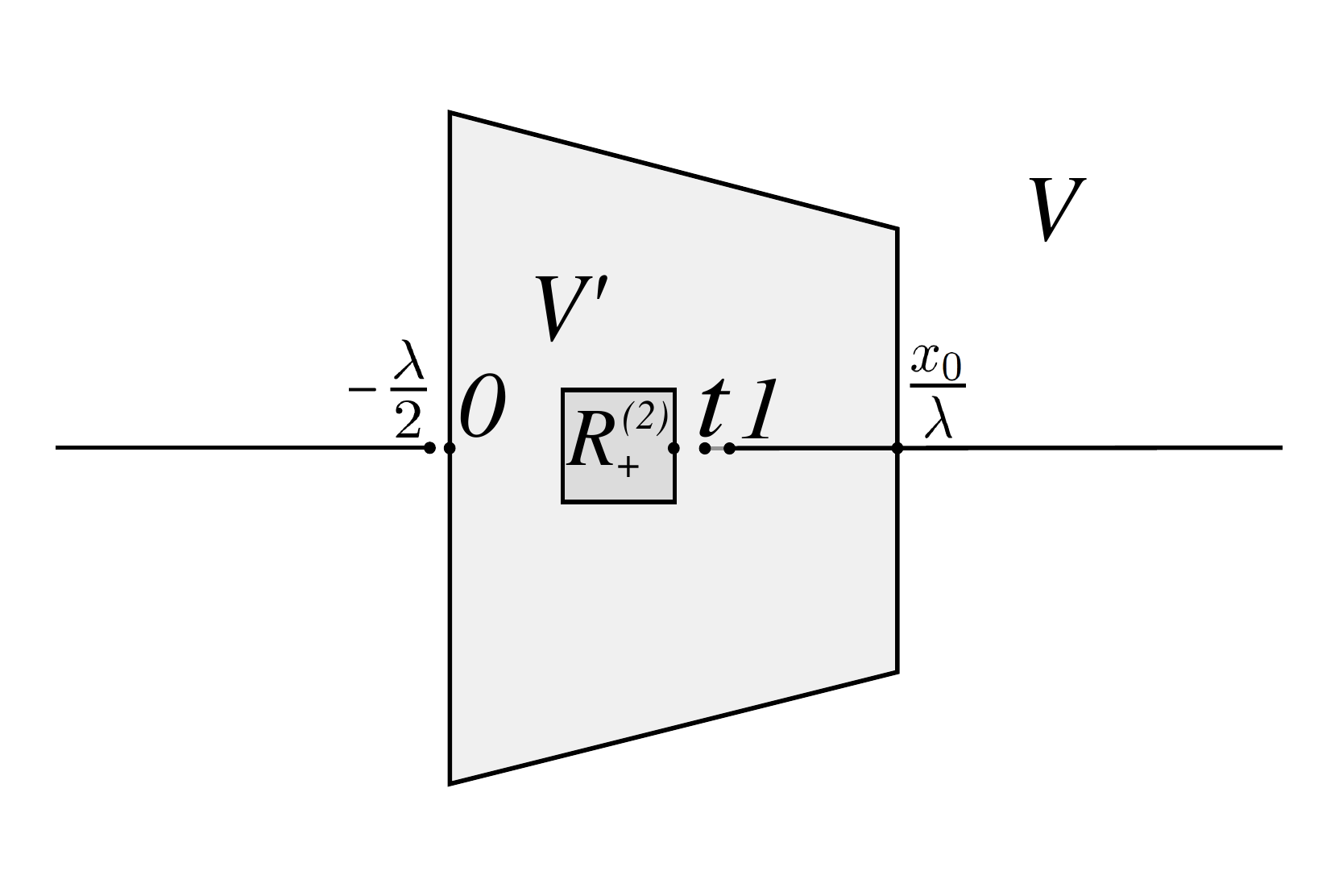}\caption{The sets $V,V'$ and $P_+^{(2)}$.} \end{center}\end{figure}
Introduce sets 
\[V=\mc\setminus((-\infty,-\tfrac{\lambda}{2}]\cup [1,+\infty)),\;\;V'=F_+^{-1}(V)=W_+\setminus (t,\tfrac{x_0}{\lambda}),\]
where $t=F_+^{-1}(-\tfrac{\lambda}{2})\subset (x_0,1),$ since $F_+^{-1}(-\lambda)=1$ and $F_+^{-1}(0)=x_0$. Notice that \[R_+^{(2)}\Subset V'\subset V.\] 
\begin{Rem} Let $k$ be an index such that $w_k\in R^{(2)}_+$ and $m_k\ge 1$. Then, by definition, 
\[H_k(w_k)=\lambda^{r_k-r_{k+1}}F(w_k)=\pm w_{k+1}\subset W_+^{(1)}\Subset V.\] Therefore,
 the norm of the derivative $DH_k(w_k)$ in the hyperbolic metric on $V$ is well defined.
\end{Rem}
\begin{Prop}\label{PropExponV} There exists $\lambda_2>1$ such that if $w_k\in R^{(2)}_+$ and $m_k\ge 1$ then \[\|DH_k(w_k)\|_V\ge \lambda_2.\]
\end{Prop}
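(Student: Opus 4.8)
The plan is to show that $H_k$ restricted to the relevant domain is a composition of a univalent branch of $F_+$ followed by a scaling by $\lambda^{r_k-r_{k+1}}$, and that scaling is a hyperbolic isometry between appropriately scaled copies of $V$. The key point is therefore to produce uniform hyperbolic expansion for the single map $F_+\colon V'\to V$ at all points of the compact set $R_+^{(2)}$. First I would observe that by the Remark immediately preceding the statement, $w_k\in R_+^{(2)}\Subset V'$ and $H_k(w_k)=\pm w_{k+1}\in W_+^{(1)}\Subset V$, so both the domain restriction $F_+\colon V'\to V$ and the subsequent scaling $z\mapsto \lambda^{r_k-r_{k+1}}z$ make sense as analytic maps, and in particular $\|DH_k(w_k)\|_V$ is well defined.

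The main step is to estimate $\|DF_+(w)\|$ measured with the hyperbolic metric on $V$ in the domain and the hyperbolic metric on $V$ in the target, for $w$ ranging over $R_+^{(2)}$. Since $F_+\colon V'\to V$ is univalent and $V'\subsetneq V$ with $V'$ compactly contained in $V$ when restricted to $R_+^{(2)}$ — more precisely, $R_+^{(2)}\Subset V'\subset V$ — the inclusion $\iota\colon V'\hookrightarrow V$ strictly contracts the hyperbolic metric on the compact set $R_+^{(2)}$, by the Schwarz--Pick theorem: there is $\mu<1$ with $\|D\iota(w)\|_{V'\to V}\le\mu$ for $w\in R_+^{(2)}$. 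On the other hand $F_+\colon V'\to V$ is a \emph{conformal isomorphism} onto its image $F_+(V')=V$ (by the definitions of $V$ and $V'=F_+^{-1}(V)$), hence it is a hyperbolic isometry from $(V',\rho_{V'})$ onto $(V,\rho_V)$. Writing $DF_+(w)\colon (T_wV,\rho_V)\to(T_{F_+(w)}V,\rho_V)$ as the composition of $DF_+(w)\colon(T_wV',\rho_{V'})\to(T_{F_+(w)}V,\rho_V)$ (an isometry) with the inverse of $D\iota(w)\colon(T_wV',\rho_{V'})\to(T_wV,\rho_V)$, we get $\|DF_+(w)\|_V=\|D\iota(w)\|^{-1}\ge\mu^{-1}>1$ for $w\in R_+^{(2)}$. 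Compactness of $R_+^{(2)}$ and continuity of the hyperbolic density then give a uniform constant $\lambda_2:=\mu^{-1}>1$.

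Finally I would handle the scaling factor: $H_k=A_k\circ F_+$ where $A_k(z)=\lambda^{r_k-r_{k+1}}z$ with $r_{k+1}\ge r_k-1$, so $A_k$ maps $V$ into (a rescaled copy of) $V$. Because the hyperbolic metric is conformally natural, $A_k\colon(V,\rho_V)\to(\lambda^{r_k-r_{k+1}}V,\rho_{\lambda^{r_k-r_{k+1}}V})$ is an isometry, and $\lambda^{r_k-r_{k+1}}V\supseteq V$ (as $r_k-r_{k+1}\le 1$ and $V$ is invariant under multiplication by $\lambda^{-1}$ up to the relevant inclusion — here one uses $\lambda^{-1}V\subset V$, which holds since $\lambda>1$ and $V$ is a domain containing a neighborhood of $0$ that is symmetric enough). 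Hence the further inclusion $\lambda^{r_k-r_{k+1}}V\hookrightarrow V$ is again a weak contraction of hyperbolic metrics, so $\|DA_k\|_V\ge 1$ at every point and $\|DH_k(w_k)\|_V\ge \|DF_+(w_k)\|_V\ge\lambda_2$. I expect the only delicate point to be verifying the geometric inclusions $R_+^{(2)}\Subset V'$ and $\lambda^{r_k-r_{k+1}}V\supseteq V$ precisely enough to invoke Schwarz--Pick with a \emph{uniform} (i.e.\ $z$-independent) constant; the expansion itself is then a formal consequence of Schwarz--Pick and the fact that $F_+$ is a conformal isomorphism $V'\to V$ by construction.
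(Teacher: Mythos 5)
Your first half (expansion of $F_+$) is fine: $F_+\colon V'\to V$ is a conformal isomorphism, $R_+^{(2)}\Subset V'\subset V$, so on the compact set $R_+^{(2)}$ the density ratio $\rho_{V'}/\rho_V$ is bounded below by some $\mu^{-1}>1$, giving $\|DF_+(w_k)\|_{V,V}\ge\mu^{-1}$. This is a reasonable alternative to the paper's route, which instead treats $H_k\colon A\to V$ (with $A=F_+^{-1}(\lambda^{r_{k+1}-r_k}V)$) as a single conformal isomorphism and invokes the auxiliary lemma via the function $a(r)$, bounding $\d_V(w_k,V\setminus A)$ uniformly by $R=\d_V(R_+^{(2)},V\setminus V')+\diam_V(R_+^{(2)})$.

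The second half has a genuine gap. You invoke only the general bound $r_{k+1}\ge r_k-1$, but under that bound your scaling step fails: in the paper's normalization $1/\lambda=2.5029\ldots$, so $\lambda\approx 0.4<1$ (not $\lambda>1$ as you assert), and $\lambda^{-1}V\supset V$, not $\lambda^{-1}V\subset V$. Consequently, if $r_k-r_{k+1}=1$ then $\lambda^{r_k-r_{k+1}}V=\lambda V\subsetneq V$, the containment $\lambda^{r_k-r_{k+1}}V\supseteq V$ is false, and in fact one would have $\|DA_k(z)\|_{V,V}=\rho_{\lambda^{r_{k+1}-r_k}V}(z)/\rho_V(z)\le 1$, undermining the conclusion. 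What rescues the argument — and what you must prove, rather than assume away — is that under the hypotheses $w_k\in R_+^{(2)}$ and $m_k\ge 1$, one in fact has the sharper bound $r_{k+1}\ge r_k$. The reason is structural: by the tiling relations, $F(w_k)\in W_+^{(1)}$, hence $z_{k+1}=\pm\lambda^{r_k}F(w_k)\in W^{(r_k+1)}$, so $m_{k+1}\ge r_k+1$ and thus $r_{k+1}=m_{k+1}-1\ge r_k$. (The paper states and uses exactly this, $r_{k+1}\ge r_k$, at the start of its proof, and again within the proof of Proposition~\ref{PropEuclExpV}.) With $r_{k+1}\ge r_k$ the rest of your scaling argument goes through: $\lambda^{r_{k+1}-r_k}V\subset V$, hence $\rho_{\lambda^{r_{k+1}-r_k}V}\ge\rho_V$ and $\|DA_k\|_{V,V}\ge 1$. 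Without deriving that sharper inequality, your proof as written is incorrect.
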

\noindent
Let us introduce an auxiliary function 
\[a(r)=\frac{1-x(r)^2}{2|x(r)\log x(r)|},\;\;\text{where}\;\;x(r)=\frac{e^r-1}{e^r+1}.\] Observe that 
 $a(r)$ is decreasing on $[0,\infty)$, $a(r)\to \infty$ when $r\to 0+$ and $a(r)\to 1$ when $r\to\infty$.
The proof of Proposition \ref{PropExponV} relies on the following consequence of the Schwarz-Pick Theorem:
\begin{Lm} Let $U\subset V$ be domains in $\mathbb{C}$, $G:U\to V$ be a conformal map, $z\in U$ and 
$r=\d_V(z,V\setminus U)$. Then $\|D G(z)\|_V\ge a(r)$.
\end{Lm}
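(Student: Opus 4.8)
The plan is to reduce the estimate to a one-dimensional model via the Schwarz–Pick Theorem and explicit formulas for the hyperbolic metric of a disk. First I would recall that since $G:U\to V$ is conformal onto its image inside $V$ (indeed $G(U)\subset V$ with $U\subset V$), the Schwarz–Pick Theorem applied to $G^{-1}:G(U)\to U\subset V$ gives
\[
\|DG(z)\|_V \;\ge\; \|DG(z)\|_{U\to V}^{\mathrm{hyp}} \;=\; \frac{\rho_V(G(z))\,|G'(z)|}{\rho_U(z)},
\]
where $\rho_W$ denotes the density of the hyperbolic metric on $W$; more precisely, because $G$ is a conformal isomorphism onto $G(U)$, one has $\rho_U(z) = \rho_{G(U)}(G(z))|G'(z)|$, and since $G(U)\subset V$, the monotonicity of the hyperbolic metric yields $\rho_V(G(z))\le \rho_{G(U)}(G(z))$. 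Combining these,
\[
\|DG(z)\|_V = \frac{\rho_V(G(z))}{\rho_{G(U)}(G(z))} \ge \frac{\rho_V(G(z))}{\rho_{G(U)}(G(z))}.
\]
This is circular as written, so instead the right route is: $\|DG(z)\|_V$ is by definition $\rho_V(G(z))|G'(z)|/\rho_V(z)$, and I rewrite $|G'(z)| = \rho_{G(U)}(G(z))^{-1}\rho_U(z)$, so that $\|DG(z)\|_V = \dfrac{\rho_V(G(z))}{\rho_{G(U)}(G(z))}\cdot\dfrac{\rho_U(z)}{\rho_V(z)} \ge \dfrac{\rho_U(z)}{\rho_V(z)}$, using $\rho_V\le\rho_{G(U)}$ on $G(U)$. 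Thus it suffices to bound $\rho_U(z)/\rho_V(z)$ from below in terms of $r=\d_V(z,V\setminus U)$.

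Next I would estimate $\rho_U(z)$ from below by comparison with the hyperbolic metric of the hyperbolic disk $D = B_V(z,r)\subset U$ of radius $r$ centered at $z$ in the $V$-metric: since $D\subset U$, monotonicity gives $\rho_U(z)\ge \rho_D(z)$. The disk $D$ is a hyperbolic ball of radius $r$ about its own center, and by conformal invariance we may transport to the unit disk $\md$ with center $0$; a Euclidean disk of hyperbolic radius $r$ about $0$ in $\md$ is $\{|w|<x(r)\}$ with $x(r)=\tanh(r/2)=\dfrac{e^r-1}{e^r+1}$, exactly the quantity defined before the lemma. The density of the hyperbolic metric of $\{|w|<x\}$ at the origin is $1/x$, while the density of $\md$ at $0$ is $1$. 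Therefore, after normalizing the conformal map $V\to\md$ sending $z\mapsto 0$, we get $\rho_D(z)/\rho_V(z) = (1/x(r))/1 = 1/x(r)$... — but this overshoots, because we want $a(r)$, not $1/x(r)$. The correct comparison uses both endpoints: I bound $\rho_U(z)\ge \rho_D(z)$ and $\rho_V(z)\le$ (density of the complementary configuration). Concretely, transport $V$ to $\md$ with $z\mapsto 0$; then $V\setminus U$ lies outside the ball $\{|w|<x(r)\}$, so $U$ contains this ball, giving $\rho_U(z)\ge 1/x(r)$ in the transported coordinate, whereas $\rho_V(z)=1$ there, and the factor $2|x\log x|$ appears because $a(r)$ is tuned to be the ratio of the Euclidean-disk metric density $\tfrac{1}{x}$ against the density $\tfrac{-2\log x}{1-x^2}\cdot\tfrac{1}{2}$...

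Let me instead state the clean version: after transporting $V$ to $\md$ so that $z\mapsto 0$, the set $V\setminus U$ is disjoint from the Euclidean disk $\{|w|\le x\}$, $x=x(r)$. Hence $U$ contains the doubly-connected region $\md\setminus\overline{\{|w|\le x\}}$... no — $U$ contains all of $\md$ minus a set lying in $\{x<|w|<1\}$, so in particular $U\supset \{|w|<x\}$ and also $U$ may be much larger. The sharp lower bound for $\rho_U(0)$ given only that $U\subset\md$ and $U\cap\{|w|\le x\}=\{|w|<x\}$ is realized by $U = \md\setminus[x,1)$, the slit disk, whose hyperbolic density at $0$ is computed explicitly: mapping the slit disk to $\md$ by an explicit algebraic-plus-Möbius chain yields density $\dfrac{1-x^2}{2\,|x\log x|}$ at the origin, which is precisely $a(r)$. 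Therefore $\rho_U(z)/\rho_V(z) \ge a(r)$ and the lemma follows.

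The main obstacle will be the explicit computation of the hyperbolic density of the slit disk $\md\setminus[x,1)$ at $0$ and verifying it equals $a(r)$ with $x=\tanh(r/2)$; this requires writing down the uniformizing map (a composition of a Möbius transformation moving $x$ to a standard position, a square root, and another Möbius map) and differentiating at the center, plus a monotonicity argument that the slit disk is the extremal (smallest-density) domain among all $U$ with the prescribed intersection with $\{|w|\le x\}$ — the latter by the monotonicity of the hyperbolic metric under inclusion, since any admissible $U$ contains $\md\setminus[x,1)$ is false in general, so one instead argues: $U\supset\{|w|<x\}$ only gives density $\ge 1/x > a(r)$, which is already enough since $a(r)\le 1/x(r)$; wait, that inequality needs checking, and if it holds the proof is immediate and the slit-disk computation is unnecessary. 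I would verify $a(r)\le 1/x(r)$, i.e. $\dfrac{1-x^2}{2|x\log x|}\le \dfrac1x$, i.e. $1-x^2\le 2|\log x| = -2\log x$ for $x\in(0,1)$, which is true (equality at $x=1$, and the derivative comparison gives it). Hence the clean proof: $\rho_U(z)\ge\rho_{B_V(z,r)}(z)$, transport to $\md$, the ball becomes $\{|w|<x(r)\}$, its density at $0$ is $1/x(r)\ge a(r)$, done. The $a(r)$ formulation is then just a (non-sharp but convenient) restatement used later. I would present it in this streamlined form.
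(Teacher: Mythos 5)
Your reduction to showing $\rho_U(z)/\rho_V(z)\ge a(r)$ is correct and matches the paper's starting point, but the key estimate you use to finish is backwards, and this error runs through the whole argument.

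Monotonicity of the hyperbolic metric under inclusion goes the other way from how you use it. If $D\subset U$, the inclusion $D\hookrightarrow U$ is holomorphic, so Schwarz--Pick gives $\rho_U\le\rho_D$ on $D$; smaller domains have \emph{larger} hyperbolic density. Thus from $B_V(z,r)\subset U$ you only get the \emph{upper} bound $\rho_U(z)\le\rho_{B_V(z,r)}(z)$, not the lower bound $\rho_U(z)\ge\rho_{B_V(z,r)}(z)$ that your ``clean proof'' rests on. Your final line ``$\rho_U(z)\ge\rho_{B_V(z,r)}(z)$, \dots density at $0$ is $1/x(r)\ge a(r)$, done'' therefore proves nothing: a ball contained in $U$ can never give a lower bound on $\rho_U$. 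The same inversion appears earlier when you write ``$U$ contains this ball, giving $\rho_U(z)\ge 1/x(r)$,'' which is why the factor $2|x\log x|$ refused to materialize.

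To lower-bound $\rho_U(z)$ you must instead enlarge $U$ to a domain with an explicitly computable density, which means using a point you know lies \emph{outside} $U$. The paper's proof takes any $\zeta\in V\setminus U$ and observes $U\subset V\setminus\{\zeta\}$, hence $\rho_U(z)\ge\rho_{V\setminus\{\zeta\}}(z)$; transporting $V$ conformally to $\md$ with $\zeta\mapsto 0$ and $z\mapsto w>0$, the ratio $\rho_{V\setminus\{\zeta\}}(z)/\rho_V(z)$ becomes $\rho_{\md\setminus\{0\}}(w)/\rho_\md(w)$, and the standard formula for the punctured-disk metric gives exactly $a(R)$ with $R=\d_V(z,\zeta)$; taking the infimum over $\zeta$ yields $a(r)$ since $a$ is decreasing. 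You were circling this in your ``slit disk'' digression, but that domain is not the extremal one: removing the ray $[x,1)$ makes the domain strictly smaller than $\md\setminus\{x\}$ and therefore gives a \emph{larger} density, whereas the sharp (smallest) density over all admissible $U$ is attained by puncturing at a single point. In any case you abandoned that line in favor of the ball comparison, which cannot work.
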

\begin{proof}
Since $\|DG(z)\|_{U,V}=1$ we obtain:
\[\|DG(z)\|_V=\|D\id(z)\|_{V,U},\] where 
$\id$ is the identity map. Let $\zeta\in V\setminus U$ and 
$R=\d_V(\zeta,z)$. Set $\widetilde V=V\setminus\{\zeta\}$. By the Schwarz-Pick Theorem, 
\[\|D\id(z)\|_{V,U}=\|D\id(z)\|_{V,\widetilde V}\|D\id(z)\|_{\widetilde V,U}\ge \|D\id(z)\|_{V,\widetilde V}.\] Let $\phi:V\to \mathbb U$ be the conformal map such that $\phi(\zeta)=0$ and 
$w=\phi(z)>0$.  Then
 \[\|D\id(z)\|_{V,\widetilde V}=\|D\id(w)\|_{\mathbb U,\mathbb U\setminus\{0\}},\;\;\text{and}\;\;
 R=\d_V(z,\zeta)=\d_{\mathbb U}(w,0).\] The value of  $\|D\id(w)\|_{\mathbb U,\mathbb U\setminus\{0\}}$ can be computed explicitly and is equal to $a(R)$. Since $\zeta$ is any point in $V\setminus U$ we obtain that $\|D G(z)\|\ge a(r)$.
\end{proof}

\begin{proof}[Proof of Proposition \ref{PropExponV}] Let $w_k\in R^{(2)}_+$. 
Then \[r_{k+1}\ge r_k\;\;\text{and}\;\;\lambda^{r_{k+1}-r_k}V\subset V.\] Set  
$A=F_+^{-1}(\lambda^{r_{k+1}-r_k}V)$. Then $H_k:A\to V$ is a conformal isomorphism and $w_k\in A$. Since 
\[R_+^{(2)}\Subset V'\subset V\;\;\text{and}\;\;A\subset V'\] we obtain that $R=\d_V(R_+^{(2)},V\setminus V')+\diam_V(R_+^{(2)})$ is finite and \[\d_V(w_k,V\setminus A)\le R.\] By Lemma \ref{LmHypExp} we obtain that $\|DH_k(w_k)\|_V\ge a(R)$.
\end{proof}
\begin{Prop}\label{PropEuclExpV} There exists $1>C_2>0$ such that the following is true. Let $k,l$ be such 
that $w_j\in R^{(2)}$  for $j=k,k+1,\ldots,k+l-1$. Then
\[\big|DH_{k,k+l}(w_k)\big|\ge C_2\lambda_2^l.\]
\end{Prop}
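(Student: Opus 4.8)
The plan is to pass from hyperbolic expansion in the metric of $V$ (Proposition \ref{PropExponV}) to Euclidean expansion, at the cost of a multiplicative constant that depends only on how deep the orbit $w_k,\dots,w_{k+l}$ sits inside $V$. The key observation is that all the points $w_j$ with $j=k,\dots,k+l-1$ lie in the \emph{compactly contained} subset $R^{(2)}_+\Subset V$ (in fact $R^{(2)}\Subset V$, using the symmetry of the construction), while the terminal point $w_{k+l}=\pm H_k(w_k)$ again lies in $W_+^{(1)}\Subset V$. On the compact set $\overline{R^{(2)}}\cup\overline{W^{(1)}_+}$ the density of the hyperbolic metric of $V$ is bounded above and below by positive constants, say $0<c_-\le \rho_V(z)\le c_+<\infty$.

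First I would write the chain rule in the hyperbolic metric of $V$:
\[\|DH_{k,k+l}(w_k)\|_V=\prod_{j=0}^{l-1}\|DH_{k+j}(w_{k+j})\|_V\ge \lambda_2^l,\]
which is exactly Proposition \ref{PropExponV} applied to each factor (each $w_{k+j}\in R^{(2)}_+$ — or its mirror image, which has the same hyperbolic derivative norm by the symmetry of $V$ under $z\mapsto\bar z$ — and $m_{k+j}\ge 1$ holds along the relevant part of the orbit; if some $m_{k+j}=0$ the corresponding $H$ is just $F$ and is still a uniform hyperbolic expansion on the compact set, so we may absorb it into the constant). Then I would convert back to the Euclidean metric via
\[\big|DH_{k,k+l}(w_k)\big|=\frac{\rho_V(w_k)}{\rho_V(w_{k+l})}\,\|DH_{k,k+l}(w_k)\|_V\ge \frac{c_-}{c_+}\,\lambda_2^l,\]
and set $C_2=\min\{c_-/c_+,1\}$ (shrinking if necessary so that $C_2<1$ as required in the statement), which depends only on $V$, $R^{(2)}$ and $W^{(1)}_+$, not on $k$, $l$ or the choice of $z_0$.

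The one technical point that needs care — and which I expect to be the main obstacle — is verifying that \emph{every} intermediate point $w_{k+j}$ actually lies in the set $R^{(2)}$ (rather than merely in $W$), since this is what licenses the uniform lower bound $a(R)=\lambda_2$ for the hyperbolic derivative: the diameter $\diam_V(R^{(2)}_+)$ and the distance $\d_V(R^{(2)}_+,V\setminus V')$ must both be finite, which is exactly the compact containment $R^{(2)}_+\Subset V'\subset V$ recorded above. This is guaranteed by the hypothesis of the proposition, which stipulates $w_j\in R^{(2)}$ for $j=k,\dots,k+l-1$; I would just need to check that the image point $w_{k+l}$ lands in a compactly-contained set as well (it lies in $W^{(1)}_+\Subset V$ by the computation $F(W_+)\Supset W_+^{(1)}\supset R^{(2)}_+$ preceding the statement), so that the ratio $\rho_V(w_k)/\rho_V(w_{k+l})$ is bounded below uniformly. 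No blow-up can occur because both endpoints stay away from $\partial V=(-\infty,-\tfrac{\lambda}{2}]\cup[1,+\infty)$.
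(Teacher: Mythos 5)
Your overall strategy is the same as the paper's: bound $\|DH_{k,k+l}(w_k)\|_V$ from below by $\lambda_2^l$ using Proposition~\ref{PropExponV} and the chain rule, then convert to the Euclidean derivative using compactness. The minor difference is that the paper first peels off the last factor, $|DH_{k,k+l}(w_k)|=|DH_{k+l-1}(w_{k+l-1})|\cdot|DH_{k,k+l-1}(w_k)|$, so that the hyperbolic-to-Euclidean conversion takes place entirely at points of $R^{(2)}_+$ (where the paper's constant $M$ bounds both $\rho_V$ and $|DF|$), whereas you convert the full composition at once and use density bounds on $\overline{W^{(1)}_+}$. Your variant is marginally leaner, since it dispenses with the lower bound on $|DF|$ on $R^{(2)}_+$.

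There is, however, a genuine gap in your handling of the two halves of $R^{(2)}$. Your parenthetical ``in fact $R^{(2)}\Subset V$'' is false: since $x_0>\tfrac12$, the real segment $(-x_0,-\lambda x_0)\subset R^{(2)}$ lies entirely inside the slit $(-\infty,-\lambda/2]$ removed from $V$, so $\rho_V$ is not even defined on a portion of $R^{(2)}_-$. The appeal to ``symmetry of $V$ under $z\mapsto\bar z$'' does not repair this: complex conjugation preserves each of $R^{(2)}_+$ and $R^{(2)}_-$ separately (each is conjugation-symmetric), so it does not move points from one half to the other; the reflection that does, $z\mapsto -z$, is \emph{not} a symmetry of $V$ (the two slits have different lengths). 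What actually makes the reduction to $R^{(2)}_+$ legitimate --- and what the paper invokes implicitly in its ``without loss of generality $w_k\in R^{(2)}_+$'' --- is the evenness of $F$, hence of each $H_j$: the iterates $H_{k,k+j}(w_k)=\pm w_{k+j}$ automatically land in $W_+^{(1)}\cap R^{(2)}=R^{(2)}_+$ for $j=0,\dots,l-1$, which is where Proposition~\ref{PropExponV} and the density bounds are applied. For the same reason, in your conversion step the denominator should be $\rho_V\bigl(H_{k,k+l}(w_k)\bigr)$ with $H_{k,k+l}(w_k)\in W_+^{(1)}\Subset V$, not $\rho_V(w_{k+l})$, since $w_{k+l}$ may sit in $W_-^{(1)}\not\subset V$. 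Once these sign/half-plane issues are tracked via evenness of $F$, your argument closes.
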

\begin{proof} Let $k,l$ be as in the conditions of the proposition. Without loss of generality we may assume that 
$w_k\in R^{(2)}_+$. Let $\rho(z)\dd z$ be the hyperbolic metric on $V$. Since $R_+^{(2)}\Subset V$ there exists a constant $M>0$ such that 
\[\tfrac{1}{M}<|\rho(z)|<M\;\;\text{and}\;\;|DF(z)|>\tfrac{1}{M}\;\;\text{for all}\;\;z\in R_+^{(2)}.\] 
Notice that for all $k\le j\le k+l-1$ we have:
\[F(w_j)\subset W_+^{(1)},\;\;\lambda^{r_j-r_{j+1}}F(w_j)=H_j(w_j)=\pm w_{j+1}\subset W_+^{(1)}\setminus W_+^{(2)},\] therefore 
$r_j-r_{j+1}\le 0$ and $|DH_j(z)|=\lambda^{r_j-r_{j+1}}|DF(z)|\ge \tfrac{1}{M}$ for all $z\in R_+^{(2)}$.
By Proposition \ref{PropExponV} we obtain:
\[|DH_{k,k+l}(w_k)|=|DH_{k+l-1}(w_{k+l-1})|\cdot|DH_{k,k+l-1}(w_k)|\ge \tfrac{1}{M^3}\|DH_{k,k+l-1}(w_k)\|_V\ge
\tfrac{\lambda_2^{l-1}}{M^3},\] which finishes the proof.
\end{proof}
Further, set \[P_{1,+}=\inter\overline{P_{1,I}\cup P_{1,IV}},\;W_-=-W_+.\] Then $F:P_{1,+}^{(1)}\to W_-$
is a conformal isomorphism. Notice that $P_{1,+}^{(1)}\Subset W_+$. Similarly to Proposition \ref{PropEuclExpV} we obtain:
\begin{Prop}\label{PropEuclExpP1} There exists $\lambda_3>1,1>C_3>0$ such that the following is true. Let $k,l$ be such 
that $w_j\in P_1^{(1)}$ for $j=k,k+1,\ldots,k+l-1$. Then
\[\big|DH_{k,k+l}(w_k)\big|\ge C_3\lambda_3^l.\]
\end{Prop}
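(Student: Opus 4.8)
The plan is to repeat the argument of Proposition~\ref{PropEuclExpV}, with $P_1^{(1)}$ playing the role of $R^{(2)}$ and $W_-$ the role of the reference domain $V$. The first thing to pin down is the right domain and hyperbolic metric. I would start from the observation that, since $F$ is even, the given conformal isomorphism $F\colon P_{1,+}^{(1)}\to W_-$ transports under $z\mapsto-z$ to a conformal isomorphism $F\colon P_{1,-}^{(1)}\to W_-$, where $P_{1,-}^{(1)}:=-P_{1,+}^{(1)}$; by the hypothesis $P_{1,+}^{(1)}\Subset W_+$ and the identity $W_-=-W_+$ this gives $\overline{P_{1,-}^{(1)}}\subset W_-$. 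Because the origin lies outside $\overline{P_1^{(1)}}$, the four tiles $P_{1,J}^{(1)}$ split into two pairs with disjoint closures, so $P_1^{(1)}=P_{1,+}^{(1)}\sqcup P_{1,-}^{(1)}$ and, using $P_{1,+}^{(1)}\subset W_+$ and $P_{1,-}^{(1)}\subset W_-$, we get $W_-\cap P_1^{(1)}=P_{1,-}^{(1)}$. Finally, since distinct tiles have disjoint interiors we have $P_0\cap P_1=\varnothing$, hence $P_1^{(1)}\cap W^{(1)}=\varnothing$; as $r_j\ge 1$ would force $w_j\in W^{(1)}\setminus W^{(2)}$, this shows $r_j=0$ whenever $w_j\in P_1^{(1)}$. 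Under the hypothesis of the proposition it follows that $H_j=F$ for $j=k,\dots,k+l-2$ and $H_{k+l-1}=\lambda^{-r_{k+l}}F$, so $H_{k,k+l}(w_k)=\lambda^{-r_{k+l}}F^{l}(w_k)$ and, since $|\lambda|<1$ and $r_{k+l}\ge 0$, $\bigl|DH_{k,k+l}(w_k)\bigr|\ge\bigl|DF^{l}(w_k)\bigr|$.

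Next I would replace the sequence $(w_j)$ by an honest $F$-orbit inside $P_{1,-}^{(1)}$. Let $\tilde w_k\in\{w_k,-w_k\}$ be the point lying in $P_{1,-}^{(1)}$ and put $\tilde w_{k+i}=F^{i}(\tilde w_k)$. Using that $F$ is even, that $H_j=F$ in the relevant range, and that both $P_{1,\pm}^{(1)}$ are mapped by $F$ onto $W_-$ while $W_-\cap P_1^{(1)}=P_{1,-}^{(1)}$, an induction gives $\tilde w_{k+i}=\pm w_{k+i}\in P_{1,-}^{(1)}$ for $i=0,\dots,l-1$; in particular $\bigl|DF^{l}(w_k)\bigr|=\bigl|DF^{l}(\tilde w_k)\bigr|$.

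The expansion estimate is then immediate from the Schwarz--Pick lemma used in the proof of Proposition~\ref{PropExponV}, applied to $F\colon P_{1,-}^{(1)}\to W_-$: for $z\in P_{1,-}^{(1)}$ one gets $\|DF(z)\|_{W_-}\ge a(r)$ with $r=\d_{W_-}(z,W_-\setminus P_{1,-}^{(1)})\le \diam_{W_-}\!\bigl(\overline{P_{1,-}^{(1)}}\bigr)=:R_0<\infty$, hence $\|DF(z)\|_{W_-}\ge a(R_0)=:\lambda_3>1$. Since $\tilde w_k,\dots,F^{l-1}(\tilde w_k)$ all lie in $P_{1,-}^{(1)}\subset W_-$, multiplicativity of the hyperbolic derivative gives $\|DF^{l-1}(\tilde w_k)\|_{W_-}\ge\lambda_3^{\,l-1}$, and, peeling off the last factor as in Proposition~\ref{PropEuclExpV},
\[\bigl|DF^{l}(\tilde w_k)\bigr|=\bigl|F'(\tilde w_{k+l-1})\bigr|\cdot\|DF^{l-1}(\tilde w_k)\|_{W_-}\cdot\frac{\rho_{W_-}(\tilde w_k)}{\rho_{W_-}(\tilde w_{k+l-1})}\ \ge\ c\,\lambda_3^{\,l-1},\]
where $|F'|$ is bounded below on the compact set $\overline{P_{1,-}^{(1)}}\subset W_-$ (which contains no critical point of $F$) and $\rho_{W_-}$ is bounded away from $0$ and $\infty$ there. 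Combining with the first paragraph gives $\bigl|DH_{k,k+l}(w_k)\bigr|\ge\bigl|DF^{l}(\tilde w_k)\bigr|\ge C_3\lambda_3^{\,l}$ with $C_3:=c/\lambda_3$, which we may take in $(0,1)$.

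I expect the only genuinely delicate point to be the first step: realizing that the given containment $P_{1,+}^{(1)}\Subset W_+$ is, through the reflection $z\mapsto-z$ under which $F$ is invariant, exactly the ``self-map'' containment $\overline{P_{1,-}^{(1)}}\subset W_-$ with $F(P_{1,-}^{(1)})=W_-$ --- this is what forces the orbit to stay in a compactly contained subdomain of the reference domain and lets Schwarz--Pick produce a uniform expansion factor. The rest is routine bookkeeping: signs are absorbed by the evenness of $F$, and the scaling factors $\lambda^{r_j-r_{j+1}}$ are harmless because $r_j=0$ on $P_1^{(1)}$ and $|\lambda|^{-r_{k+l}}\ge 1$.
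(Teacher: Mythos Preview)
Your proof is correct and follows essentially the same route the paper intends by ``Similarly to Proposition~\ref{PropEuclExpV}'': you identify the conformal isomorphism $F\colon P_{1,-}^{(1)}\to W_-$ with $P_{1,-}^{(1)}\Subset W_-$, apply the Schwarz--Pick consequence (the unnamed lemma before Proposition~\ref{PropExponV}) to get uniform hyperbolic expansion on $W_-$, and then convert to a Euclidean estimate by compactness, exactly mirroring the passage from Proposition~\ref{PropExponV} to Proposition~\ref{PropEuclExpV}. Your preliminary reductions---that $r_j=0$ on $P_1^{(1)}$ so $H_j=F$, that $\lambda^{-r_{k+l}}\ge 1$, and that evenness of $F$ lets you replace $(w_j)$ by a genuine $F$-orbit $(\tilde w_j)$ staying in $P_{1,-}^{(1)}$---are all correct and are precisely the bookkeeping one needs to make the ``similarly'' rigorous.
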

\subsection{Proof of Proposition \ref{PropPolyEsc}.} 
\begin{Lm}\label{LmUsingKoebe} There exists a constant $C>1$ such that if one of the following is true\\
$1)$ $F^j(z)\in W$ for $j=0,1,\ldots,s-1$ and $F^s(z)\in W^{(1)}\setminus S^{(2)}$,\\
$2)$ $z\in Q^{(1)}\setminus W^{(2)}$, and 
$F^j(z)\in Q^{(1)}\setminus W^{(1)}$ for $j=1,\ldots,s-1$ then 
\[d(z,J_f)\le \frac{Cd(F^s(z),J_f)}{|DF^s(z)|}.\]
\end{Lm}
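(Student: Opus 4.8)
\textbf{Proof proposal for Lemma \ref{LmUsingKoebe}.}

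The plan is to use the Koebe distortion theorem along an inverse branch of $F^s$ to compare the distance from $z$ to $J_f$ with the distance from $F^s(z)$ to $J_f$, with the derivative $|DF^s(z)|$ as the scaling factor. The essential point is that in both cases $1)$ and $2)$ the orbit segment $z,F(z),\ldots,F^{s-1}(z)$ stays in a region on which $F$ is a covering of controlled geometry, so that an appropriate inverse branch of $F^s$ is defined on a \emph{definite} hyperbolic (or Euclidean) neighborhood of $F^s(z)$ and hence, by Koebe, has bounded distortion there.

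First I would set up the univalent inverse branch. In case $1)$, let $\zeta=F^s(z)\in W^{(1)}\setminus S^{(2)}$. By Proposition \ref{PropTilingStr}/Lemma \ref{LmPartition}, or rather by the tile structure of Theorem \ref{ThCrit}, the point $\zeta$ lies in a tile $P\in\cp^{(1)}$ of level $\ge 1$, and $F$ maps each tile univalently onto a half-plane $\mh_\pm$. Pulling back along the orbit $F^{s-1}(z),\ldots,z$ — at each step taking the unique inverse branch that follows the orbit — produces a univalent branch $\varphi$ of $F^{-s}$ defined on a domain $\Omega\supset\zeta$. I would argue that $\Omega$ contains a round disk $D(\zeta,c_0)$ of \emph{definite} radius $c_0>0$: indeed, since $\zeta\notin S^{(2)}$ it is at definite Euclidean (equivalently, definite hyperbolic-in-$\mh_\pm$) distance from the real and imaginary axes and from the boundary of its tile, so a small disk around it pulls back through all $s$ inverse branches while staying inside $W$ — here one uses the expansion estimates of Lemma \ref{LmHypExp} and Propositions \ref{PropHypExp}, \ref{PropExponV}, \ref{PropEuclExpV}, \ref{PropEuclExpP1}, which guarantee that iterating forward \emph{expands}, hence iterating backward \emph{contracts}, so the pullback of a definite disk never grows and never hits $\partial W$. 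Case $2)$ is analogous, using that $F|_{Q^{(1)}}$ is quadratic-like: the orbit stays in $Q^{(1)}\setminus W^{(1)}$, a region compactly contained in the domain of $F$ away from the critical point, so again the relevant inverse branch of $F^s$ is univalent on a definite disk around $F^s(z)$, with $z\in Q^{(1)}\setminus W^{(2)}$ bounded away from the critical point so that the single ``extra'' branch at the end is also nonsingular.

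Next, I would apply the Koebe $\tfrac14$-theorem and Koebe distortion theorem to $\varphi$ on $D(\zeta,c_0)$. Writing $d=d(\zeta,J_f)$, note $d\le\operatorname{diam}(W^{(1)})$ so, after possibly shrinking, $\min(d,c_0)\ge c_1 d$ for a uniform $c_1>0$ (if $d\le c_0$ this is trivial with $c_1=1$; if $d>c_0$ we simply have $d$ comparable to a constant). Since $\varphi(J_f\cap D(\zeta,c_0))\subset J_f$ (because $J_f$ is totally invariant) and $\varphi$ has distortion bounded by an absolute constant $K$ on $D(\zeta,c_0/2)\ni\zeta$, we get
\[
d(z,J_f)=d(\varphi(\zeta),\varphi(J_f))\le K\,|\varphi'(\zeta)|\cdot d(\zeta,J_f\cap D(\zeta,c_0/2))\le K\,|\varphi'(\zeta)|\,d.
\]
To turn $d(\zeta,J_f\cap D(\zeta,c_0/2))$ into $d(\zeta,J_f)=d$ I would note that if the nearest point of $J_f$ to $\zeta$ lies outside $D(\zeta,c_0/2)$ then $d\ge c_0/2$ is bounded below, and then the claimed inequality $d(z,J_f)\le C d/|DF^s(z)|$ holds trivially because $d(z,J_f)\le\operatorname{diam}(W)$ while $|DF^s(z)|$ is controlled below by the expansion estimates — so we may assume the nearest point is inside, and the displayed bound gives exactly $d(z,J_f)\le K\,|\varphi'(\zeta)|\,d = K\,d/|DF^s(z)|$ since $|\varphi'(\zeta)|=1/|DF^s(z)|$. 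Taking $C=\max(K,\ \operatorname{diam}(W)/(\text{lower bound on }|DF^s|\cdot\ \text{lower bound on }d))$ finishes it.

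The main obstacle I anticipate is the first step: showing that the univalent inverse branch $\varphi$ is actually defined on a disk of \emph{uniform} radius around $F^s(z)$, uniformly in $s$ and in $z$. This requires combining the combinatorial tile-pullback structure (Lemma \ref{LmPartition}, Proposition \ref{PropTilingStr}) with the quantitative hyperbolic-expansion estimates (Lemma \ref{LmHypExp}, Propositions \ref{PropHypExp}--\ref{PropEuclExpP1}) to control that backward iteration of a definite disk neither escapes $W$ nor gets pinched by a critical point of $F$; the hypotheses $F^s(z)\notin S^{(2)}$ in $1)$ and $z\notin W^{(2)}$ in $2)$ are precisely what keeps the endpoints away from the bad loci, and the bulk of the work is propagating this through the whole orbit.
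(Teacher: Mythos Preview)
Your overall strategy---pull back a univalent branch of $F^{-s}$ and apply Koebe distortion---is the right one, but you miss the single observation that makes the argument short, and because of that you introduce a real gap.

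The key fact, used in the paper, is that the \emph{postcritical set} of $F$ lies on the real axis (Theorem~\ref{ThCrit}: all critical values of $F$ are real, and $\mr$ is $F$-invariant). Consequently every branch of $F^{-s}$ extends univalently to the entire half-plane $\mh_\pm$. There is nothing to propagate along the orbit and no need to invoke the expansion Propositions~\ref{PropHypExp}--\ref{PropEuclExpP1}; one simply fixes, once and for all, convex domains $V_1\Subset V_2\subset\mh_+$ with $(W^{(1)}\setminus S^{(2)})\cap\mh_+\subset V_1$ and $V_1\cap J_F\neq\varnothing$, takes the branch $\phi:V_2\to\mc$ with $\phi(F^s(z))=z$, and applies Koebe distortion on the pair $V_1\Subset V_2$. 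Since $V_1$ is large enough to meet $J_F$, the distance $d(F^s(z),J_F\cap\overline{V_1})$ is comparable to $d(F^s(z),J_F)$ by a fixed constant, and the inequality follows.

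Your argument, by contrast, tries to manufacture a disk $D(\zeta,c_0)$ of uniform radius on which $\varphi$ is univalent by arguing that backward iteration contracts. Even granting that step, the disk you obtain need not meet $J_F$, and your treatment of that case is incorrect: you claim that when $d=d(\zeta,J_F)\ge c_0/2$ the inequality $d(z,J_F)\le Cd/|DF^s(z)|$ ``holds trivially because $|DF^s(z)|$ is controlled below''. But a \emph{lower} bound on $|DF^s(z)|$ makes the right-hand side \emph{small}, not large; and since $|DF^s(z)|$ can be arbitrarily large (indeed grows with $s$), no fixed $C$ rescues the bound in this case. The paper avoids this entirely by working on a fixed domain $V_1$ that already contains points of $J_F$---which is only possible because the inverse branch is defined on all of $\mh_\pm$.
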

\begin{proof} Assume that the first condition is true: $F^j(z)\in W$ for $j=0,1,\ldots,s-1$ and $F^s(z)\in W^{(1)}\setminus S^{(2)}$. Without loss of generality let $\im F^s(z)>0$. Fix two convex domains 
$V_1\Subset V_2\subset \mh_+$ such that 
\[V_1\Supset\big(W^{(1)}\setminus S^{(2)}\big)\cap \mh_+,\;\;\text{and}\;\;V_1\cap J_f\neq\varnothing.\]  
Since the postcritical set of $F$ belongs to $\mr$ there exists $\phi:V_2\to\mc$ such that \[F^s\circ\phi=\id\;\;
\text{and}\;\;\phi(F^s(z))=z.\] Notice that $\phi(V_2)\subset W$. By Koebe Distortion Theorem there exists a constant 
$M_1$ (independent from $s$ or $z$) such that
\[|\phi'(u)|\le M_1{|\phi'(F^s(z))|}=\frac{M_1}{|DF^s(z)|}\;\;\text{for all}\;\;u\in V_1.\] The condition  
$F^j(z)\in W$ for $j=0,1,\ldots,s$ implies that $\phi(V_1\cap J_F)\subset J_F$. 
Set \[M_2=\sup\limits_{u\in V_1}\frac{d(u,\overline V_1\cap J_F)}{d(u,J_F)}.\]
Then 
\[d(z,J_f)\le \frac{M_1d(F^s(z),\overline V_1\cap J_f)}{|DF^s(z)|}\le \frac{M_1M_2d(F^s(z),J_f)}{|DF^s(z)|} .\]

The second case can be treated similarly.
\end{proof}
\begin{proof}[Proof of Proposition \ref{PropPolyEsc}.] $1)$ First assume that $j_{term}<\infty$.
Set $r=\inf\{|F'(z)|:z\in W^{(1)}\setminus S^{(2)}\}.$ Clearly, $r>0$. Let $k$ be such that 
$w_k\in W^{(1)}\setminus S^{(2)}$. 
Since $r_{k+1}\ge r_k-1$ we have: \[|DH_k(w_k)|=\lambda^{r_k-r_{k+1}}|DF(w_k)|\ge\lambda r.\]

Further, let $I_0$ be the set of indexes $i$ from $0,1,\ldots,j_{term}$ such that 
$w_i\in W^{(1)}\setminus S^{(2)}$. Notice that $j_{term}\in I_0$. Set $I=I_0\cup \{0\}$. 
let $j_1<j_2$ be two consecutive indexes from $I$. Observe that if $w_k\in R^{(2)}$ for some $k$ then $F(w_k)\subset W^{(1)}_+$, 
$m_{k+1}\ge 1$ and thus either $w_{k+1}\in R^{(2)}$ or $k+1\in I$. It follows that there exists $j_1<j\le j_2$ such that $m_k=0$ for $k=j_1+1,j_1+2,\ldots, j-1$ and 
$w_k\in R^{(2)}$ for $k=j,j+1,\ldots, j_2-1$. Using Propositions \ref{PropEuclExpV} and \ref{PropEuclExpP1} we get:
\begin{align*}\big|DH_{j_1,j_2}(w_{j_1})\big|=
\big|DH_{j_1}(w_{j_1})\big|\cdot\big|DH_{j_1+1,j}(w_{j_1+1})\big|\cdot\big|DH_{j,j_2}(w_{j})\big|
\ge\\ \lambda r C_3\lambda_3^{j-j_1-1}C_2\lambda_2^{j_2-j}\ge C_4\lambda_4^{j_2-j_1},\;\;\text{where}\;\;C_4=\lambda rC_2C_3/\lambda_3,\lambda_4=
\min\{\lambda_2,\lambda_3\}.\end{align*}  As before, let $N$ be the number of indexes $k$ such that $m_k\ge 1$ and $w_k\in W^{(1)}\setminus S^{(2)}$. We obtain:
\[|DH_{0,j_{term}}(w_0)|\ge C_4^{N+1}\lambda_4^{j_{term}}.\]
Notice that $m_{j_{term}}=1, z_{j_{term}}=w_{j_{term}}$ and hence \[H_{0,j_{term}}(w)=F^{s_{j_{term}}}(\lambda^{r_0}w).\] It follows that 
\[|DF^{s_{j_{term}}}(z)|\ge C_4^{N+1}\lambda_4^{j_{term}}.\]
Since $r_j=0$ for $j\ge j_{term}$ we have $H_j=F$ for $j\ge j_{term}$. By Proposition \ref{PropEuclExpP1},
\[|DF^{s_{k_{term}}-s_{j_{term}}}(z_{j_{term}})|\ge C_3r\lambda_3^{k_{term}-j_{term}-1}\ge C_4\lambda_4^{k_{term}-j_{term}}.\]
Therefore \[|DF^{s_{k_{term}}}(z)|\ge C_4^{N+2}\lambda_4^{k_{term}}.\]

Assume that $d(z,J_f)\ge 2^{-n}$. Then by Lemma \ref{LmUsingKoebe}, \[|DF^{s_{j_{term}}}(z)|\le 2^nC d(z_{j_{term}},J_f)
 \;\;\text{and}\;\;
|DF^{s_{k_{term}}-s_{j_{term}}}(z_{j_{term}})|\le C\frac{d(z_{k_{term}},J_f)}{d(z_{j_{term}},J_f)}.\] 
Thus, $|DF^{s_{k_{term}}}(z)|\le 2^nM$, where 
$M=C^2\diam(W)$. Therefore, $k_{term}\le A_1n+A_2N+A_3$ where
\[A_1=\log_{\lambda_4}2,A_2=-\log_{\lambda_4}C_4,A_3=\log_{\lambda_4}M-2\log_{\lambda_4}C_4.\]
On the other hand, using Proposition \ref{PropHypExp} we obtain:
\[N\le n\log_{\lambda_1}2+\log_{\lambda_1}(M/C_1).\] Thus, $k_{term}\le An+B$, where 
\[A=A_1+A_2\log_{\lambda_1}2, B=A_3+A_2\log_{\lambda_1}(M/C_1).\]

$2)$ Assume now that $j_{term}=\infty$, that is $z_j\in Q^{(1)}\setminus W^{(1)}$ for all $j<k_{term}$. Then $m_j=0$,
 $z_j=w_j=F^j(z)$ and $H_j=F$ for all $j<k_{term}$.  Using Proposition 
\ref{PropEuclExpP1} we get:
\[|DF^{k_{term}}(z)|\ge C_3\lambda_3^{k_{term}}.\] If $d(z,J_f)\ge 2^{-n}$ then \[|DF^{k_{term}}(z)|\le M2^n\;\;
 \text{and}\;\;k_{term}\le n\log_{\lambda_3}2+\log_{\lambda_3}(M/C_3),\] which finishes the proof.
\end{proof}
 \section{The algorithm.}
 Fix a dyadic number $\delta>0$ such that
\[\delta<\tfrac{1}{2}d(\C\setminus W,W^{(1)})\;\;\text{and}\;\;F(U_\delta(J_f))\subset Q^{(1)}.\]
\begin{Prop}\label{PropUsingKoebe2} There exist constants $K_1,K_2>0$
 such that for any
$z\in U_{\delta}(J_F)$ and any $k\in \mathbb{N}$ if  $F^k(z)\in U_\delta(Q^{(1)})\setminus U_{\delta}(J_F)$ then one has 
$$\frac{K_1}{|DF^k(z)|}\leqslant d(z,J_F)\leqslant \frac{K_2}{|DF^k(z)|}.$$ \end{Prop}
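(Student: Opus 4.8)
The plan is to prove this exactly as Lemma~\ref{LmUsingKoebe} was proved: by a Koebe distortion estimate for a univalent inverse branch of $F^k$. Write $w=F^k(z)$. The hypotheses give $d(w,J_F)>\delta$, and since $w$ lies in the bounded set $U_\delta(Q^{(1)})$ while $J_F$ is a fixed bounded set, also $d(w,J_F)\le D$ for a constant $D=D(F,\delta)$; thus $d(w,J_F)$ is comparable to a constant, which is what will make all the estimates below uniform.

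First I would build the inverse branch. By Proposition~\ref{PropX0} and Theorem~\ref{ThCrit} the postcritical set of $F$ lies on $\mr$, hence inside $J_F$, so no ball $B(w,\rho)$ with $\rho< d(w,J_F)$ contains a critical value of any iterate of $F$. Using Theorem~\ref{ThCrit} -- concretely, that $F$ carries each component of $F^{-1}(\mh_\pm)$ univalently onto $\mh_\pm$ -- the branch $\phi$ of $F^{-k}$ determined by the orbit $z,F(z),\dots,w$ extends univalently to a simply connected domain $V_2\ni w$ of definite size; when $w\notin\mr$ one may take $V_2$ to be the half-plane $\mh_+$ or $\mh_-$ containing $w$, and $\phi$ is univalent on all of it. (The degenerate case $\im w\approx 0$ is reduced to this one using the escape structure of Section~\ref{SecCompIter}, by which at an escape instant $\im F^k(z)$ is bounded away from $0$.) Inside $V_2$ fix a convex subdomain $V_1\Subset V_2$ with $B(w,\delta)\subset V_1$, $\diam V_1\le D'$ for a constant $D'$, and $V_1\cap J_F\ne\varnothing$; this is possible because $d(w,J_F)$ is bounded above and $J_F$ meets $\mh_\pm$. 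In the intended application $k$ is the first escape time, so $F^j(z)\in U_\delta(J_F)\subset W$ for $j<k$; hence $\phi$ is the ``inner'' branch, $\phi(V_2)\subset W$, and therefore $\phi(V_1\cap J_F)\subset J_F$ (using backward invariance of $J_F$ under the quadratic-like restriction of $F$). On the other hand, from $F^k\circ\phi=\id$ and forward invariance $F(J_F)\subset J_F$ one gets $\phi(u)\notin J_F$ for every $u\in V_1\setminus J_F$, in particular $\phi(B(w,\delta))\cap J_F=\varnothing$.

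With $\phi$ in hand the estimate is routine. Since $V_1\Subset V_2$, the Koebe Distortion Theorem gives a constant $M>1$, independent of $z$ and $k$, with $|\phi'(u)|\le M|\phi'(w)|$ for all $u\in V_1$, where $|\phi'(w)|=|DF^k(z)|^{-1}$. Choosing $\xi\in V_1\cap J_F$ and integrating $\phi'$ along the segment $[w,\xi]\subset V_1$,
\[
d(z,J_F)\le|z-\phi(\xi)|\le M\,|\phi'(w)|\,|w-\xi|\le\frac{M D'}{|DF^k(z)|},
\]
which is the upper bound with $K_2=MD'$. For the lower bound, $\phi$ is univalent on $B(w,\delta)$, so by the Koebe $\tfrac14$-theorem $\phi(B(w,\delta))\supset B\bigl(z,\tfrac14\delta|\phi'(w)|\bigr)$; since $\phi(B(w,\delta))$ avoids $J_F$, this forces $d(z,J_F)\ge\tfrac14\delta|DF^k(z)|^{-1}$, i.e. the lower bound with $K_1=\delta/4$.

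The one substantial point is the construction of the domain $V_2$ on which $\phi$ is univalent, together with the verification that $\phi$ is the inner branch -- this is precisely where one uses that the postcritical set of $F$ is real (Theorem~\ref{ThCrit}), and it is also where the case $\im w\approx 0$ must be handled separately. Once $V_2$ is available, everything else is a direct application of Koebe's theorem and parallels the proof of Lemma~\ref{LmUsingKoebe}.
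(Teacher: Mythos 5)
Your overall strategy is the same as the paper's (Koebe distortion on a univalent inverse branch, using that the critical orbits are real), but the choice of the domain on which the branch is controlled differs in a way that leaves a genuine gap.

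The paper covers $U_\delta(Q^{(1)})\setminus U_\delta(J_F)$ by \emph{finitely many} simply connected pairs $W_j\Subset U_j$ with $\bigcup U_j\subset W\setminus[-1,1]$. The point of this choice is that these domains are allowed to touch the real axis, as long as they stay away from the segment $[-1,1]$; univalence of the inverse branch on $U_j$ then uses the sharper fact that the postcritical set of the quadratic-like restriction $F|_W$ lies in $[-1,1]$, not merely in $\mr$. You instead take $V_2=\mh_\pm$. This forces you to assume $\im w$ bounded away from $0$: otherwise $B(w,\delta)\not\subset\mh_\pm$ (so the Koebe $\tfrac14$-theorem step for $K_1$ fails), and a convex $V_1\Subset\mh_\pm$ containing $w$ cannot reach $J_F$ when the nearest part of $J_F$ to $w$ lies on $\mr$ (so the $K_2$ step fails as well). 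You acknowledge this but dismiss it by claiming that ``at an escape instant $\im F^k(z)$ is bounded away from $0$'' via Section~\ref{SecCompIter}. That is not what the escape structure gives. Lemma~\ref{LmZkFin} bounds $|\im z_{j_{term}}|$ away from $0$ for the \emph{last iterate that lies in} $W^{(1)}$, which is generally not the escape point from $U_\delta(J_F)$. After time $j_{term}$ the orbit passes through $P_1^{(1)}$, whose boundary contains a real segment outside $[-1,1]$, and indeed, since $J_F\cap\mr$ is a proper subinterval of $Q^{(1)}\cap\mr$, points of $U_\delta(Q^{(1)})\setminus U_\delta(J_F)$ with arbitrarily small imaginary part do occur, and the algorithm can land on them. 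So the degenerate case is real and your reduction does not handle it.

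Everything else in your argument (the univalence of the pullback of each tile because the critical values of the maximal extension are real, the identification $|\phi'(w)|=|DF^k(z)|^{-1}$, the two Koebe estimates, the use of backward invariance to place $\phi(V_1\cap J_F)$ in $J_F$) is sound and parallels Lemma~\ref{LmUsingKoebe}. The fix is exactly what the paper does: replace $\mh_\pm$ by domains in $W\setminus[-1,1]$, which may touch $\mr$; univalence of the inverse branch on such a domain comes from the postcritical set of $F|_W$ being inside $[-1,1]$.
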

\begin{proof}
There exist a finite
number of pairs of simply connected sets
 $W_j\Subset U_j$ such that the following is true
 \begin{itemize}\item[$1)$] $W_j\cap J_F\neq \varnothing$ for any $j$;
 \item[$2)$] $\bigcup W_j\supset U_{\delta}(Q^{(1)})\setminus U_{\delta}(J_F)$;
 \item[$3)$] $\bigcup U_j\subset W\setminus [-1,1]$.
 \end{itemize}
Assume now that $z,k$ satisfy the conditions of the proposition.
Then $F^k(z)\in W_j$ for some $j$. Since the postcritical set of $F_{|W}$ belongs to $[-1,1]$ the map $F^k$ admits an inverse on $U_j$. Let $\phi:U_j\rightarrow U(z)$ be the
branch of $(F^k)^{-1}$ such that $\phi(F^k(z))=z$. By Koebe Distortion Theorem
 there exists $R_j>0$ independent of $z$ or $k$ such
that $\phi(W_j)$ is contained in the disk of radius
$R_j|D\phi(F^k(z))|$ centered at $z$. It follows that
$$d(z,J_F)\leqslant \frac{R_j}{|DF^k(z)|}.$$ Set $K_2=\max\{R_j\}$. Construction of $K_1$ is similar.
\end{proof}
Fix dyadic sets $U_1,U_2$ such that
\[Q^{(1)}\subset U_1\subset U_{\delta/2}(Q^{(1)}),\;\;U_{\delta/4}(J_f)\subset U_2\subset U_{\delta/2}(J_f).\]

Assume that we would like to verify that a dyadic point $z$ is $2^{-n}$ close to $J_f$. Consider first points
 $z$ which lie outside $U_2$. Fix a dyadic set $U_3$ such that
 $$J_F\subset U_3\Subset U_2.$$ Then we can approximate the distance from a point $z\notin U_2$
 to $J_F$ by the
 distance form $z$ to $U_3$ up to a constant factor.

Now assume $z\in U_2$. The key tool of the algorithm is a sequence of approximations of the iterates 
$z_k=F^{s_k}(z)$. However, the numbers $s_k$ depend
on the levels $m_k$ such that  $z_k\in W ^{(m_k)}\setminus W^{(m_k+1)}$. These levels cannot be computed exactly. Because of this,
 we inductively define approximations $$p_j\approx\tilde z_j=F^{\tilde s_j}(z)$$ of, possibly, different iterates  closely related to $\{z_k\}$.  
\vskip 0.3cm 
\noindent{\bf Construction of $\{p_j\}$.} Set $p_0=z,\tilde s_0=0$. Assume that $p_j$ is constructed. If 
$p_j\in U_1$ let
$ \tilde m_j$ be such that $$U_{\delta/4}\big(\lambda^{-\tilde m_j}p_j\big)\subset
 W,\;\;\text{but}\;\;U_{\delta/2}\big(\lambda^{-\tilde m_j}p_j\big)\nsubseteq W^{(1)}.$$ Notice that 
for some $p_j$ there are two choices of $\tilde m_j$. We fix one of them arbitrarily. Set
$$\tilde r_j=\max\{0,\tilde m_j-1\},\;\;\tilde s_{j+1}=\tilde s_j+2^{\tilde r_j}.$$ Let $p_{j+1}$ be an approximation 
of $F^{\tilde s_{j+1}}(z)$ with precision at least $\lambda^{An+B+3}\delta$, with $A$, $B$ as in Proposition \ref{PropPolyEsc}.

If $p_j\notin U_1$ the sequence $\{p_i\}$ terminates at the index $i=j$.

\vskip 0.3cm
\noindent Observe that $m(F^{\tilde s_j}(z))-1\le\tilde m_j\le m(F^{\tilde s_j}(z)).$
\begin{Lm}\label{LmTildemj} If $d(z,J_f)\ge 2^{-n}$ then there exists a finite sequence $j_0=0<j_1<\ldots<j_{k_{term}}$ such that $\tilde s_{j_i}=s_i$ and $j_{i+1}\le j_i+{An+B}$ for all $i$. 
\end{Lm}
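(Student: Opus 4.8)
The plan is to prove the lemma by induction on $i$, building the indices $0=j_0<j_1<\dots<j_{k_{term}}$ simultaneously with the identity $F^{\tilde s_{j_i}}(z)=z_i$. Two facts feed every step of the induction. The first is Corollary \ref{CoPolyEsc}: because $d(z,J_f)\ge 2^{-n}$, we have $m(F^s(z))\le An+B$ for all $s\le s_{k_{term}}$; together with the precision $\lambda^{An+B+3}\delta$ chosen for the $p_j$ and with $\lambda^3<\tfrac14$, this keeps the (rescaled) approximation error of $p_j$ strictly inside the $\delta/4$--$\delta/2$ buffer in the definition of $\tilde m_j$, which is exactly what makes the inequality $m(\tilde z_j)-1\le\tilde m_j\le m(\tilde z_j)$ (with $\tilde z_j:=F^{\tilde s_j}(z)$) hold at every relevant step. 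The second is the tile bookkeeping from \eqref{EqF2m}, Proposition \ref{PropTilingStr} and Lemma \ref{LmPartition}, which I will use to place the iterates $F^t(z_i)$ inside the combinatorial partition and, in particular, to decide for which rescaled positions a level under-estimate is even admissible.

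I would start from the trivial base case $j_0=0$, $\tilde s_0=0=s_0$, $p_0=z_0$. For the inductive step, assume $\tilde s_{j_i}=s_i$. If $m_i=0$ then $z_i\in W^{(0)}\setminus W^{(1)}$ lies well inside $W$, so the only admissible value of $\tilde m_{j_i}$ is $0$, giving $\tilde s_{j_i+1}=s_i+1=s_{i+1}$; we set $j_{i+1}=j_i+1$. When $m_i\ge1$, the orbit arc $z_i,F(z_i),\dots,F^{2^{m_i-1}}(z_i)=z_{i+1}$ is the first-landing orbit from $W^{(m_i)}$ into $W^{(m_i-1)}$, so every strictly intermediate iterate has level $\le m_i-2$; using \eqref{EqF2m} and Proposition \ref{PropTilingStr} one locates these iterates (e.g.\ $F^{2^a}(z_i)\in P_1^{(m_i-1)}$ for $0<a\le m_i-2$) and extracts a ``renormalization-tower'' description of the arc. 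The two consequences I need are: (a) the algorithmic times $\tilde s_j$ issuing from $s_i$ never overshoot $s_{i+1}$ and eventually hit it exactly --- each jump $2^{\max\{0,\tilde m_j-1\}}$ either closes the current sub-arc of the tower or splits it in half, and the admissible estimates $\tilde m_j$ are precisely the ones that keep $\tilde s_j$ inside $[s_i,s_{i+1}]$; and (b) the number of sub-arcs in this subdivision is at most $m_i$, since each additional under-estimate refines the tower by exactly one level and the levels encountered along the arc never exceed $m_i$. Then $s_{i+1}=\tilde s_{j_{i+1}}$ for some $j_{i+1}$ with $j_{i+1}-j_i\le m_i=m(F^{s_i}(z))\le An+B$ by Corollary \ref{CoPolyEsc}.

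It remains to note that the chain is finite and stops at the right place: at $i=k_{term}$ we have $z_{k_{term}}\notin Q^{(1)}$, hence $p_{j_{k_{term}}}\notin U_1$ because $\delta$ was taken small, so the construction of $\{p_j\}$ terminates at $j=j_{k_{term}}$, and the finite chain $0=j_0<\dots<j_{k_{term}}$ has all the required properties.

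I expect the hard part to be claim (a) together with (b) --- the statement that the fattened, imperfect level estimates never carry $\tilde s_j$ beyond $s_{i+1}$ and always return it exactly onto the ideal landing time $s_{i+1}$ after at most $m_i$ jumps. This is the place where one must genuinely use the dyadic structure of the first-landing orbit: one has to check, via \eqref{EqF2m}, Lemma \ref{LmPartition} and Proposition \ref{PropTilingStr}, that whenever the current iterate $\tilde z_j$ sits strictly inside the arc at level $M$, its orbit-time distance to $s_{i+1}$ is a multiple of the power of $2$ associated with $M$, and that the only admissible $\tilde m_j$ give jumps $2^{\max\{0,\tilde m_j-1\}}$ not exceeding that distance. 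The accompanying error analysis --- that the accumulated error in the $p_j$'s (amplified by at most $\sup_W|F'|$ per step over $O((An+B)^2)$ steps) stays below $\lambda^{An+B+3}\delta$ once each application of $F$ is computed to $O(n^2)$ bits --- is routine and only affects the constants.
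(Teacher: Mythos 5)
Your overall plan — induct on $i$ simultaneously constructing $j_i$, using Corollary~\ref{CoPolyEsc} to bound $m_i$ and the precision of the $p_j$'s to guarantee $m(\tilde z_j)-1\le\tilde m_j\le m(\tilde z_j)$, then analyze the arc $z_i\to z_{i+1}$ via its dyadic first-landing structure — is exactly the right one and matches the paper's. You have also correctly isolated the two claims that carry the whole lemma: (a) the $\tilde s_j$'s never overshoot $s_{i+1}$ and eventually land on it exactly, and (b) they do so in at most $m_i\le An+B$ steps.

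However, you stop precisely where the proof actually is. Both (a) and (b) are stated as things that ``one has to check'' via \eqref{EqF2m}, Lemma~\ref{LmPartition}, and Proposition~\ref{PropTilingStr}, but neither is checked. For (a), the heuristic you offer --- that the orbit-time distance from an intermediate iterate to $s_{i+1}$ is a multiple of the relevant power of $2$ --- is not what is needed and is not established; the argument that actually closes (a) is a contradiction of the following shape: take $k=\max\{l:\tilde s_l\le s_{i+1}\}$ and suppose $\tilde s_k<s_{i+1}$; then $s_{i+1}-\tilde s_k<2^{\tilde r_k}$ by maximality, while setting $x=F^{\tilde s_k}(z)$ one shows $m(x)\le m_i-2$ (because $x$ lies strictly inside the first-landing arc), so $\tilde r_k\le m(x)-1$, yet $F^{s_{i+1}-\tilde s_k}(x)\in W^{(m_i-1)}\subset W^{(m(x)-1)}$ forces $s_{i+1}-\tilde s_k\ge 2^{m(x)-1}\ge 2^{\tilde r_k}$ --- contradiction. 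Nothing in your sketch supplies this pair of opposing bounds. For (b), the assertion ``each under-estimate refines the tower by exactly one level'' does not by itself give the bound: the correct argument distinguishes whether $\tilde r_{k+1}=\tilde r_k-1$ holds for all $k\ge j_i$ (which can last at most $m_i$ steps before the sequence $\{p_l\}$ terminates) from the first $k$ where it fails, and in the latter case one uses the combinatorial inclusion $\tilde z_{k+1}\subset Q^{(\tilde r_k+1)}$ (obtained from \eqref{EqF2m} and the fact that $F^{2^{r_i-r}-1}(W^{(r_i-r+1)})$ sits in the component $Q^{(1)}$ of $F^{-1}(W)$ containing $0$) to conclude $\tilde r_{k+1}\ge\tilde r_k$, which forces the accumulated jumps $2^{\tilde r_{j_i}}+\cdots+2^{\tilde r_{k+1}}\ge 2^{r_i}=s_{i+1}-s_i$, hence $j_{i+1}\le k+1\le j_i+m_i$. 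Without this dichotomy and the estimate $\tilde r_{k+1}\ge\tilde r_k$ at the first non-decrement, the ``at most $m_i$'' count is unjustified. In short, the proposal is a faithful outline of the paper's strategy but leaves its two substantive steps as assertions rather than proofs.
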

\begin{proof} We will prove existence of $j_i$ by induction. The base is obvious: $\tilde s_0=s_0=0$. Assume that 
$\tilde s_{j_i}=s_i$. Recall that by Corollary \ref{CoPolyEsc} $$m_i\le An+B.$$ By definition of $p_{j_i}$ 
we have $$m_i-1\le \tilde m_{j_i}\le m_i.$$ Thus, if $m_i\le 1$ then $\tilde r_{j_i}=r_i=0$ and $\tilde s_{j_i+1}=s_{i+1}$.
Let $m_i\ge 2$. Set $$k=\max\{l:\tilde s_l\le s_{i+1}\}.$$ Assume that $\tilde s_k<s_{i+1}$. 
We have:
$$s_{i+1}-\tilde s_k<2^{\tilde r_k}.$$
Set $x=F^{\tilde s_k}(z)$.
Since the first return map from $W^{(m_i)}$ to $W^{(m_i-1)}$ 
is $F^{2^{m_i-1}}=F^{s_{i+1}-s_i}$ we have $$ m(x)= m(F^{\tilde s_k-s_i}(z_i))\le m_i-2.$$ Notice that 
$m(x)-1\le\tilde m_k\le m(x)$.
 On the other hand, $F^{s_{i+1}-\tilde s_k}(x)\in W^{(m_i-1)}\subset W^{(m(x)-1)}$, therefore, 
$$s_{i+1}-\tilde s_k\ge 2^{m(x)-1}\ge 2^{\tilde r_k}.$$ This contradiction shows that $\tilde s_k=s_{i+1}$ and finishes 
the proof of existence of $j_i$.

Further, fix $i<k_{term}$. Clearly, if $\tilde r_{j_i}=r_i$
 then $j_{i+1}=j_i+1$. Assume that $\tilde r_{j_i}\neq r_i$. Then $\tilde r_{j_i}=r_i-1$. 
If $$\tilde r_{k+1} =\tilde r_k-1\;\;
\text{for all}\;\; k\ge j_i$$ then $\{p_l\}$ terminates at an index $l\le j_i+m_i\le j_i+An+B$, and so  
$j_{i+1}\le j_i+An+B.$
Otherwise, let
$$k=\min\{l\ge j_i:\tilde r_{k+1}\neq \tilde r_k-1\}.$$ For simplicity, set $r=\tilde r_k$. Observe that 
$$\tilde z_{j_i}=z_i\in W^{(r_i+1)},\;\;\text{therefore}\;\; \tilde z_{k+1}\subset F^{2^{r_i-1}+2^{r_i-2}+\ldots+2^r}(W^{(r_i+1)})\subset \pm\lambda^rF^{2^{r_i-r}-1}(W^{(r_i-r+1)}).$$ Notice that $F^{2^{r_i-r}}(W^{(r_i-r+1)})\subset 
W$, and thus $F^{2^{r_i-r}-1}(W^{(r_i-r+1)})$ lies inside the connected component of $F^{-1}(W)$ containing $0$. This connected component is $Q^{(1)}$.
We obtain:
$$\tilde z_{k+1}\subset Q^{(r+1)},\;\;\text{therefore}\;\; \tilde m_{k+1}\ge r\;\;\text{and}\;\;\tilde r_{k+1}\ge r-1.$$
 From definition of $k$ we conclude that $\tilde r_{k+1}\ge r$. Thus,
$$ 2^{\tilde r_{j_i}}+2^{\tilde r_{j_i+1}}+\ldots+2^{\tilde r_k}+2^{\tilde r_{k+1}}= 
2^{r_i-1}+2^{r_i-2}+\ldots+2^r+2^{\tilde r_{k+1}}\ge 2^{r_i}=s_{i+1}-s_i.$$ It follows that 
$j_{i+1}\le k+1\le j_i+m_i\le j_i+An+B$.
\end{proof}

Consider the following 
\vskip 0.3cm
\noindent{\bf Main subprogram:}\\ \\$i:=1$ \\{\bf while} $i\leqslant
(An+B)^2+2$
{\bf do}\\
$(1)$ Compute dyadic approximations $$p_i\approx
\tilde z_i=F^{\tilde s_i}(z)\;\;\text{introduced above and}\;\; d_i\approx
\left|DF^{\tilde s_i}(z)\right|;$$
\\ $(2)$ Check the inclusion
$p_i\in U_1$:\begin{itemize}\item[$\bullet$] if $p_i\in
U_1$, go to step $(5)$;\item[$\bullet$] if $p_i\notin U_1$,
proceed to step $(3)$;\end{itemize}
$(3)$ Check the inequality $d_i\geqslant K_2 2^n+1$. If true, output $0$ and exit the subprogram, otherwise\\
$(4)$ output $1$ and exit the subprogram.\\
$(5)$ $i\rightarrow i+1$\\
{\bf end while}\\
$(6)$ Output $0$ end exit.\\
{\bf end}
\\
\\
Observe that for all $i$ \begin{align*}\tilde z_{i+1}=F^{\tilde s_{i+1}}(z)=F^{2^{\tilde r_i}}(\tilde z_i)=(-\lambda)^{\tilde r_i}F(\tilde z_i/\lambda^{\tilde r_i}),\;\;\text{and}\\
DF^{\tilde s_{i+1}}(z)=DF^{2^{\tilde r_i}}(\tilde z_i)DF^{\tilde s_i}(z)=DF(\tilde z_i/\lambda^{\tilde r_i})DF^{\tilde s_i}(z).\end{align*} Thus, 
the subprogram runs for at most $(An+B)^2+2$
number of while-cycles each of which consists of $O(n)$ arithmetic operations 
and evaluations of $F$ and $F'$ with
precision $O(n)$ dyadic bits. Hence the running time of the subprogram can be bounded by a polynomial.

\begin{Prop}\label{PropSubprogram} Let $h(n,z)$ be the
output of the subprogram. Then \begin{equation}h(n,z)=\left\{\begin{array}{ll}
1,&\text{if}\;\;d(z,J_f)>2^{-n},\\
0,&\text{if}\;\;d(z,J_f)<K2^{-n},\\\text{either}\;0\;\text{or}\;1,&\text{otherwise},\end{array}
\right.\end{equation}where $K=\frac{K_1}{K_2+1}$, \end{Prop}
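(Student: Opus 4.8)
The plan is to analyze the three possible exit points of the Main subprogram --- steps $(3)$, $(4)$, and $(6)$ --- and match each against the trichotomy in the statement, using Proposition \ref{PropPolyEsc}, Lemma \ref{LmTildemj}, and Proposition \ref{PropUsingKoebe2} to control the relationship between $d(z,J_F)$, the iterate index, and the computed derivative magnitude $d_i\approx|DF^{\tilde s_i}(z)|$.

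First I would handle the case $d(z,J_F)>2^{-n}$. By Proposition \ref{PropPolyEsc} the sequence $\{z_k\}$ terminates at some $k_{term}\le An+B$, and by Lemma \ref{LmTildemj} the corresponding subsequence $\{p_{j_i}\}$ satisfies $j_{i+1}\le j_i+(An+B)$ for each $i<k_{term}$; hence the terminating index of $\{p_j\}$ is at most $(An+B)k_{term}\le (An+B)^2$, comfortably within the while-loop bound $(An+B)^2+2$. Therefore the loop must exit through step $(3)$ or $(4)$, i.e. at some index $i$ with $p_i\notin U_1$. I need to show step $(3)$ fails (so the subprogram outputs $1$): because $p_i\notin U_1\supset Q^{(1)}$, the true iterate $\tilde z_i=F^{\tilde s_i}(z)$ lies outside $Q^{(1)}$ (using that $p_i$ approximates $\tilde z_i$ to within $\lambda^{An+B+3}\delta$, much smaller than $d(\mc\setminus U_1, Q^{(1)})$ --- this is where the choice of $U_1$ inside $U_{\delta/2}(Q^{(1)})$ and of the approximation precision enter). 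Then, looking at the last index $i'$ with $p_{i'}\in U_1$ (equivalently with $F^{\tilde s_{i'}}(z)\in Q^{(1)}$, up to the small error), Proposition \ref{PropUsingKoebe2} gives $d(z,J_F)\le K_2/|DF^{\tilde s_{i'}}(z)|$, so $|DF^{\tilde s_{i'}}(z)|\le K_2 2^n$, hence $d_{i}\approx|DF^{\tilde s_i}(z)|$ stays below $K_2 2^n+1$ after a bounded number of further applications of $DF$ (each bounded on $W$), and step $(3)$ evaluates false --- so the output is $1$, as required.

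Next the case $d(z,J_F)<K 2^{-n}$. Here I want the subprogram to output $0$, which happens either through step $(3)$ or through step $(6)$. If the loop runs to completion (exit at step $(6)$), we are done. Otherwise it exits at step $(3)$ or $(4)$ with $p_i\notin U_1$; I must rule out step $(4)$. Again pick the last index $i'$ before this with $F^{\tilde s_{i'}}(z)\in Q^{(1)}$; applying Proposition \ref{PropUsingKoebe2} in the other direction, $d(z,J_F)\ge K_1/|DF^{\tilde s_{i'}}(z)|$, so $|DF^{\tilde s_{i'}}(z)|\ge K_1/d(z,J_F) > K_1 2^n/K = (K_2+1)2^n$. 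Since $F^{\tilde s_i}(z)\notin Q^{(1)}$ is obtained from $F^{\tilde s_{i'}}(z)$ by at most one further renormalized iterate, and $|DF|$ is bounded below on the relevant region, $d_i\ge K_2 2^n+1$, so step $(3)$ evaluates true and the output is $0$. One subtlety: $z\in U_2\subset U_{\delta/2}(J_F)$ is guaranteed when we enter the subprogram, and $d(z,J_F)<K2^{-n}$ places $z$ well inside $U_\delta(J_F)$, so the hypothesis of Proposition \ref{PropUsingKoebe2} ($z\in U_\delta(J_F)$, $F^k(z)\in U_\delta(Q^{(1)})\setminus U_\delta(J_F)$) applies to the first exit index $k=\tilde s_i$ --- I need the point $F^{\tilde s_i}(z)$ to have left $U_\delta(J_F)$, which follows because $p_i\notin U_1$ together with $U_{\delta/4}(J_F)\subset U_2\subset U_1$ forces $\tilde z_i\notin U_{\delta/4}(J_F)$.

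Finally, in the remaining range $K2^{-n}\le d(z,J_F)\le 2^{-n}$ no claim is made, so any output is acceptable. The main obstacle I anticipate is the bookkeeping in the first case: showing that the while-loop bound $(An+B)^2+2$ genuinely suffices, which requires combining the termination bound $k_{term}\le An+B$ of Proposition \ref{PropPolyEsc} with the ``gap'' bound $j_{i+1}-j_i\le An+B$ of Lemma \ref{LmTildemj} and being careful that the approximation errors in $p_i$ never cause a spurious early exit or spurious continuation --- i.e. that the test $p_i\in U_1$ correctly tracks $F^{\tilde s_i}(z)\in Q^{(1)}$ given the chosen nesting $Q^{(1)}\subset U_1\subset U_{\delta/2}(Q^{(1)})$ and the approximation precision $\lambda^{An+B+3}\delta$. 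Once that robustness is established, the derivative estimates from Proposition \ref{PropUsingKoebe2} close the argument in each branch.
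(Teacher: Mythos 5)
Your overall structure --- analyzing the three exit points $(6)$, $(3)$, $(4)$ and matching each against the trichotomy --- is the same as the paper's, and your treatment of the exit at step $(6)$ is essentially right. The genuine gap is in the step $(3)$ versus step $(4)$ discrimination. Proposition \ref{PropUsingKoebe2} can be applied only at the iterate $\tilde z_{i-1}$, the last index for which the point is still in $U_\delta(Q^{(1)})\setminus U_\delta(J_F)$; the paper's proof applies it precisely there and works with $|DF^{\tilde s_{i-1}}(z)|$, and the constant $K=K_1/(K_2+1)$ is calibrated only to absorb the approximation error in $d_{i-1}$ (the passage from $d_{i-1}\ge K_2 2^n+1$ to $|DF^{\tilde s_{i-1}}(z)|\ge K_2 2^n$ and from $d_{i-1}<K_2 2^n+1$ to $|DF^{\tilde s_{i-1}}(z)|\le K_2 2^n+2\le (K_2+1)2^n$). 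You instead try to bridge from $|DF^{\tilde s_{i-1}}(z)|$ to $d_i\approx|DF^{\tilde s_i}(z)|$ by asserting that $d_i$ ``stays below $K_2 2^n+1$ after a bounded number of further applications of $DF$.'' That bridge does not hold: $|DF^{\tilde s_i}(z)|=|DF^{\tilde s_{i-1}}(z)|\cdot|DF(\tilde z_{i-1}/\lambda^{\tilde r_{i-1}})|$, and the extra factor is only bounded above by some $M>1$, so in the case $d(z,J_F)>2^{-n}$ the bound $|DF^{\tilde s_{i-1}}(z)|\le K_2 2^n$ yields only $d_i\lesssim MK_2 2^n$, which is not $<K_2 2^n+1$. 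Symmetrically, in the case $d(z,J_F)<K 2^{-n}$, the lower bound on the extra factor you would need (roughly $K_2/(K_2+1)$) is neither verified nor automatic; the paper never establishes such a quantitative two-sided bound on $|DF|$ at this point, and it does not need to.

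The clean resolution, which is what the paper's proof in fact does (and what the subprogram must intend, despite the text of step $(3)$ reading $d_i$), is to test $d_{i-1}$ at step $(3)$ and apply Proposition \ref{PropUsingKoebe2} directly at $\tilde z_{i-1}$, with no need to control the derivative over the final escaping step. If you insist on testing $d_i$, you would have to recalibrate the threshold and the constant $K$ to absorb the multiplicative factor $|DF(\tilde z_{i-1}/\lambda^{\tilde r_{i-1}})|$, which requires explicit uniform two-sided Euclidean bounds on $|DF|$ over the relevant region --- an extra argument you do not supply.
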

\begin{proof} Suppose first that the subprogram
runs the while-cycle $(An+B)^2+2$ times and exits at the step $(6)$. This means that $p_i\in U_1$ and $\tilde z_i\in W$ for
$i=1,\ldots,(An+B)^2+1$. Since $F(W\setminus Q^{(1)})\cap W=\varnothing$
 we get that $\tilde z_i\in Q^{(1)}$ for all $i=1,\ldots,(An+B)^2$. By Proposition \ref{PropPolyEsc} and Lemma \ref{LmTildemj} we obtain that $d(z,J_F)\le 2^{-n}$. Thus if $d(z,J_F)>
2^{-n}$, then the subprogram exits at a step other than $(6)$.

Now assume that for some $i\leqslant (An+B)^2+2$ the subprogram falls into the step $(3)$. Then $$p_{i-1}\in
U_1\;\; \text{and}\;\; p_i\notin U_1.$$ By the choice of $\delta$ we get $\tilde z_{i-1}\in U_\delta(Q^{(1)})\setminus U_\delta(J_F)$. Further, if $d_{i-1}\ge
K_2 2^n+1$, then $|DF^{\tilde s_{i-1}}(z)|\geqslant K_22^n$. By Proposition \ref{PropUsingKoebe2}, $$d(z,J_F)\leqslant
2^{-n}.$$ Otherwise, $|Df^{\tilde s_{i-1}}(z)|\leqslant K_22^n+2\leqslant (K_2+1)2^n.$ In this case Proposition
\ref{LmUsingKoebe} 
 implies that
 $$d(z,J_f)\geqslant \frac{K_1}{K_2+1}2^{-n}.$$
\end{proof}
 Now, to distinguish the case when $d(z,J_f)<2^{-n-1}$ from the case when
$d(z,J_f)>2^{-n}$ we can partition each pixel of size $2^{-n}\times 2^{-n}$ into pixels of size
$(2^{-n}/K)\times (2^{-n}/K)$ and  run the subprogram for the center of each subpixel. This would
increase the running time at most by a constant factor.
 \section{Remarks and open questions.}
We remark that our approach should carry over from the Feigenbaum map $F$ to the Feigenbaum polynomial $f_{c_*}$ (with an oracle for $c_*$)
in a straightforward
fashion, however, at the cost of further complicating what already is a rather technical proof. It should also work for other
infinitely renormalizable real quadratic polynomials with bounded combinatorics. Unbounded combinatorics for real quadratics
is created either by small perturbations of parabolic or of Misiurewicz dynamics (see \cite{L}). Both of these cases is 
computationally ``tame'' (see \cite{Dudko}), hence, we conjecture:

\medskip
\noindent
{\bf Conjecture.} {\sl For every infinitely renormalizable real quadratic polynomial $f_c$ its Julia set is poly-time computable
with an oracle for $c$.}

\medskip
\noindent
Many open questions on computational complexity of quadratic Julia sets remain open. Let us conclude by mentioning two foremost
ones. The first is complexity bounds on Cremer quadratic Julia sets: it is known that all of them are computable \cite{BBY07}, but no
informative pictures have ever been produced. Nothing is known about their computational complexity, in particular, it is not
known if any of them are computationally hard. New ideas and techniques are likely required to make progress here.

The second question has already been formulated in \cite{Dudko}:

\medskip
\noindent
{\bf Question.} {\sl Is the Julia set of a typical real quadratic map poly-time? }

\medskip
\noindent
Weak hyperbolicity is typical in the real quadratic family -- however, it is not
clear to us whether it is sufficient for poly-time computability (see the discussion in \cite{Dudko}). Our renormalization-based
approach developed in the present work may also prove useful in tackling this problem.

\end{large}

\end{document}